%%%%%%%%%%%%%%%%%%%%%%% file template.tex %%%%%%%%%%%%%%%%%%%%%%%%%
%
% This is a general template file for the LaTeX package SVJour3
% for Springer journals.          Springer Heidelberg 2010/09/16
%
% Copy it to a new file with a new name and use it as the basis
% for your article. Delete % signs as needed.
%
% This template includes a few options for different layouts and
% content for various journals. Please consult a previous issue of
% your journal as needed.
%
%%%%%%%%%%%%%%%%%%%%%%%%%%%%%%%%%%%%%%%%%%%%%%%%%%%%%%%%%%%%%%%%%%%
%
% First comes an example EPS file -- just ignore it and
% proceed on the \documentclass line
% your LaTeX will extract the file if required
% \begin{filecontents*}{example.eps}
% %!PS-Adobe-3.0 EPSF-3.0
% %%BoundingBox: 19 19 221 221
% %%CreationDate: Mon Sep 29 1997
% %%Creator: programmed by hand (JK)
% %%EndComments
% gsave
% newpath
%   20 20 moveto
%   20 220 lineto
%   220 220 lineto
%   220 20 lineto
% closepath
% 2 setlinewidth
% gsave
%   .4 setgray fill
% grestore
% stroke
% grestore
% \end{filecontents*}
%
% \RequirePackage{fix-cm}
%
\documentclass{svjour3}       
\journalname{}
\usepackage[
    backend=biber,
    style=alphabetic,
    ]{biblatex}
\addbibresource{sn-bibliography.bib}

% Version: 2024-06-17 13:11

              % onecolumn (standard format)
% \documentclass[smallextended,numbook,runningheads]{svjour3}     % onecolumn (second format)
%\documentclass[smallcondensed]{svjour3}     % onecolumn (ditto)
% \documentclass[smallextended]{svjour3}       % onecolumn (second format)
%\documentclass[twocolumn]{svjour3}          % twocolumn
%
\smartqed  % flush right qed marks, e.g. at end of proof
\usepackage{graphicx}
%
% \usepackage{mathptmx}      % use Times fonts if available on your TeX system
%
% insert here the call for the packages your document requires
%\usepackage{latexsym}
% etc.
%
% please place your own definitions here and don't use \def but
% \newcommand{}{}
%
% Insert the name of "your journal" with
% \journalname{myjournal}
%

% ------------ include package ------------
\usepackage{lipsum}
\usepackage{amsfonts,amsmath,amssymb}
\usepackage{graphicx}
\usepackage{epstopdf}
\usepackage{algorithm}
\usepackage{algorithmic}

\usepackage{commath,amsopn}
\usepackage{multirow}
\usepackage{hyperref}
\hypersetup{colorlinks=true,linkcolor=blue,citecolor=blue}
\usepackage{stmaryrd}
\usepackage{xifthen}
\usepackage{booktabs}
\usepackage{graphicx}
\usepackage{subfigure}
\usepackage{mathtools}
\usepackage{epstopdf}

\usepackage{tablefootnote}
\usepackage{extarrows}
\usepackage{colortbl}
\usepackage{adjustbox}
% ------------ graphic path ------------
% \graphicspath{ {./} }
% \renewcommand{\qedsymbol}{}

% ------------ comment ------------

\newcommand{\revise}[1]{{#1}}
% \color{red!80!black}

% ----- personal stuff -----

% ------------ notation ------------

\newcommand{\vece}{\mathbf{e}}
\newcommand{\vecl}{\boldsymbol{\ell}}
\newcommand{\vecx}{\mathbf{x}}
\newcommand{\vecr}{\mathbf{r}}
\newcommand{\vecu}{\mathbf{u}}
\newcommand{\vecv}{\mathbf{v}}
\newcommand{\vecy}{\mathbf{y}}

\newcommand{\mata}{\mathbf{A}}
\newcommand{\matb}{\mathbf{B}}
\newcommand{\matc}{\mathbf{C}}
\newcommand{\matd}{\mathbf{D}}
\newcommand{\mate}{\mathbf{E}}
\newcommand{\matf}{\mathbf{F}}
\newcommand{\matx}{\mathbf{X}}
\newcommand{\matu}{\mathbf{U}}
\newcommand{\matv}{\mathbf{V}}
\newcommand{\matW}{\mathbf{W}}
\newcommand{\maty}{\mathbf{Y}}

\newcommand{\matC}{\mathbf{C}}

\newcommand{\matE}{\mathbf{E}}
\newcommand{\matG}{\mathbf{G}}
\newcommand{\matI}{\mathbf{I}}

\newcommand{\matR}{\mathbf{R}}
\newcommand{\matM}{\mathbf{M}}
\newcommand{\matN}{\mathbf{N}}

\newcommand{\matQ}{\mathbf{Q}}
\newcommand{\matS}{\mathbf{S}}
\newcommand{\matX}{\mathbf{X}}
\newcommand{\matU}{\mathbf{U}}
\newcommand{\matV}{\mathbf{V}}
\newcommand{\matY}{\mathbf{Y}}

\newcommand{\tensA}{\mathcal{A}}

\newcommand{\tensC}{\mathcal{C}}

\newcommand{\tensG}{\mathcal{G}}

\newcommand{\tensM}{\mathcal{M}}
\newcommand{\tensN}{\mathcal{N}}

\newcommand{\tensT}{\mathcal{T}}

\newcommand{\tensV}{\mathcal{V}}
\newcommand{\tensW}{\mathcal{W}}
\newcommand{\tensY}{\mathcal{Y}}
\newcommand{\tensX}{\mathcal{X}}

\newcommand{\rank}{\mathrm{rank}}

\newcommand{\ranktc}{\mathrm{rank}_{\mathrm{tc}}}

\newcommand{\St}{\mathrm{St}}
\newcommand{\Span}{\mathrm{span}}
\newcommand{\grad}{\mathrm{grad}}

\newcommand{\subjectto}{\mathrm{s.\,t.}}
\newcommand{\T}{\mathsf{T}}

\newcommand{\frob}{\mathrm{F}}

\newcommand{\PGD}{$\mathrm{P^2GD}$}

% ------------ tikz ------------
\usepackage{tikz}
\usepackage{tikz-3dplot}
\usetikzlibrary{patterns}
\usetikzlibrary{shapes,calc,shapes,arrows}

\makeatletter
\@addtoreset{equation}{section}
\makeatother

% ------------ MathOperator ------------

\DeclareMathOperator{\diag}{diag}

\DeclareMathOperator{\ten}{ten}
\DeclareMathOperator{\tangent}{T}
\DeclareMathOperator{\normal}{N}
\DeclareMathOperator{\retr}{R}

\DeclareMathOperator{\proj}{P}
\DeclareMathOperator{\approj}{\tilde{P}}
\DeclareMathOperator*{\argmin}{arg\,min}
\DeclareMathOperator*{\argmax}{arg\,max}

\usepackage{array}
\usepackage{longtable} % and/or
\usepackage{colortab} % or
\usepackage{colortbl}
\usepackage{arydshln}

\begin{document}

\title{Low-rank optimization on Tucker tensor varieties\thanks{BG was supported by the Young Elite Scientist Sponsorship Program by CAST. YY was supported by the National Natural Science Foundation of China (grant No. 12288201).}
}
% \subtitle{Do you have a subtitle?\\ If so, write it here}

\titlerunning{Tucker tensor variety}        % if too long for running head

\author{Bin Gao \and Renfeng Peng \and Ya-xiang Yuan}

%\authorrunning{Short form of author list} % if too long for running head

\institute{Bin Gao \and Ya-xiang Yuan \at
    State Key Laboratory of Scientific and Engineering Computing, Academy of Mathematics and Systems Science, Chinese Academy of Sciences, Beijing, China \\
              \email{\{gaobin,yyx\}@lsec.cc.ac.cn; }
           \and
           Renfeng Peng \at
           State Key Laboratory of Scientific and Engineering Computing, Academy of Mathematics and Systems Science, Chinese Academy of Sciences, and University of Chinese Academy of Sciences, Beijing, China\\
           \email{pengrenfeng@lsec.cc.ac.cn} 
}

\date{Received: date / Accepted: date}
% The correct dates will be entered by the editor

\maketitle

\begin{abstract}
    In the realm of tensor optimization, the low-rank Tucker decomposition is crucial for reducing the number of parameters and for saving storage. We explore the geometry of Tucker tensor varieties---the set of tensors with bounded Tucker rank---which is notably more intricate than the well-explored matrix varieties. We give an explicit parametrization of the tangent cone of Tucker tensor varieties and leverage its geometry to develop provable gradient-related line-search methods for optimization on Tucker tensor varieties. To the best of our knowledge, this is the first work concerning geometry and optimization on Tucker tensor varieties. In practice, low-rank tensor optimization suffers from the difficulty of choosing a reliable rank parameter. To this end, we incorporate the established geometry and propose a Tucker rank-adaptive method that aims to identify an appropriate rank with guaranteed convergence. Numerical experiments on tensor completion reveal that the proposed methods are in favor of recovering performance over other state-of-the-art methods. The rank-adaptive method performs the best across various rank parameter selections and is indeed able to find an appropriate rank. 
\keywords{Low-rank optimization \and Tucker decomposition \and algebraic variety \and tangent cone \and rank-adaptive strategy}
\PACS{15A69 \and 65K05 \and 65F30 \and 90C30}
% \subclass{MSC code1 \and MSC code2 \and more}
\end{abstract}

\section{Introduction}
Tensors are powerful tools for representing multi-dimensional data, yet they are often encumbered by high storage and computational costs. Adopting a low-rank assumption mitigates these challenges by extracting the most significant information from tensors, thereby substantially reducing storage requirements. Low-rank optimization has demonstrated its effectiveness across various applications, including image processing~\cite{vasilescu2003multilinear,6909773}, matrix and tensor completion~\cite{kressner2014low,kasai2016low,gao2023optimization,gao2024riemannian}, tensor equations~\cite{kressner2016preconditioned}, mathematical finance~\cite{glau2020low}, and high-dimensional partial differential equations~\cite{bachmayr2023dynamical,wang2023solving}; see~\cite{grasedyck2013literature,uschmajew2020geometric} for an overview. 

In contrast with the matrix rank, different tensor decomposition formats can lead to various definitions of the rank of a tensor. The canonical polyadic decomposition~\cite{hitchcock1928multiple}, Tucker decomposition~\cite{tucker1964extension}, tensor train decomposition~\cite{oseledets2011tensor}, and tensor ring decomposition~\cite{zhao2016tensor} are among the most typical decomposition formats; see~\cite{kolda2009tensor} for an overview. Specifically, Tucker decomposition, also referred to as higher-order principal component analysis~\cite{kapteyn1986approach} or multilinear singular value decomposition~\cite{de2000multilinear}, allows us to explore the low-rank structure along each mode of a tensor. Moreover, Tucker decomposition boils down to the ubiquitous singular value decomposition (SVD) in the matrix case. In this paper, we are concerned with the following low-rank Tucker tensor optimization problem in which the search space consists of tensors with bounded Tucker rank, i.e.,
\begin{equation}
    \begin{aligned}
        \min_{\tensX}\ &\ \ \ \ \ \ f(\tensX) \\
        \subjectto\ &\quad \tensX\in\tensM_{\leq\vecr}:=\{\tensX\in\mathbb{R}^{n_1\times n_2\times\cdots\times n_d}:\ranktc(\tensX)\leq\vecr\},
    \end{aligned}
    \label{eq: problem (P)}
\end{equation}
where $f:\mathbb{R}^{n_1\times n_2\times\cdots\times n_d}\to\mathbb{R}$ is a smooth function, $\vecr=(r_1,r_2,\dots,r_d)$ is an array of $d$ positive integers, and $\ranktc(\tensX)$ denotes the Tucker rank of $\tensX$. The set $\tensM_{\leq\vecr}$ can be constructed through the determinants of minors with size $(r_k+1)$ being zero from the mode-$k$ unfolding matrix of a tensor for $k=1,2,\dots,d$, which renders $\tensM_{\leq\vecr}$ a~real-algebraic variety. Hence, we refer to~$\tensM_{\leq\vecr}$ as the \emph{Tucker tensor varieties}. Note that there is another type of tensor varieties in tensor train format. Kutschan~\cite{kutschan2018tangent} provided the parametrization of the tangent cone for tensor train varieties. More recently, Vermeylen et~al.~\cite{vermeylen2023rank} adopted the geometric tools in tensor train varieties and proposed a rank-estimation method for third-order tensor train completion.

\paragraph{Related work and motivation}
Since low-rank Tucker tensor optimization is closely related to low-rank matrix optimization, we start with an overview of the existing research in the field of low-rank matrix optimization. Recall that $\mathbb{R}_{r}^{m\times n}:=\{\matx\in\mathbb{R}^{m\times n}:\rank(\matx)=r\}$ is a smooth manifold (see, e.g.,~\cite{helmke1995critical,bruns2006determinantal}), one can benefit from the geometric tools and design geometric methods for minimizing~$f$ over $\mathbb{R}_{r}^{m\times n}$. For instance, Shalit et~al.~\cite{shalit2012online} proposed an online-learning procedure on $\mathbb{R}_{r}^{m\times n}$ and applied the procedure to a multi-label image classification task. Vandereycken~\cite{vandereycken2013low} derived geometric tools on the manifold $\mathbb{R}_{r}^{m\times n}$ and proposed the Riemannian conjugate gradient method for low-rank matrix completion. Mishra et~al.~\cite{mishra2014fixed} studied the quotient geometry of product manifolds generated by fixed-rank matrix factorizations and proposed Riemannian methods for low-rank matrix optimization. Based on the fact that~$\mathbb{R}_{r}^{m\times n}$ is not closed, Schneider and Uschmajew~\cite{schneider2015convergence} considered minimizing $f$ over the closure of $\mathbb{R}_{r}^{m\times n}$, i.e., the matrix varieties $\mathbb{R}_{\leq r}^{m\times n}:=\{\matx\in\mathbb{R}^{m\times n}:\rank(\matx)\leq r\}$, and proposed the projected gradient descent method (\PGD) in which the $t$-th iterate is updated with stepsize $s^{(t)}$ by
\[\matx^{(t+1)}=\proj_{\leq r}\!\left(\matx^{(t)}+s^{(t)}\proj_{\tangent_{\matx^{(t)}}\!\mathbb{R}_{\leq r}^{m\times n}}(-\nabla f(\matx^{(t)}))\right)\] 
that involves two metric projections onto the varieties and the tangent cone. The convergence of \PGD\ was proved by means of the assumption on $f$ satisfying \L{}ojasiewicz inequality. Zhou et~al.~\cite{zhou2016riemannian} designed a Riemannian rank-adaptive method on~$\mathbb{R}_{\leq r}^{m\times n}$ where the convergence is guaranteed. Recently, Hosseini and Uschmajew~\cite{hosseini2019gradient} proposed a gradient sampling method for optimization on general real algebraic varieties. Gao and Absil~\cite{gao2022riemannian} employed the geometric illustration of tangent cone to develop a Riemannian rank-adaptive method for low-rank matrix completion, which also appears to be favorable in low-rank semidefinite programming~\cite{tang2023solving}. More recently, Olikier and Absil~\cite{olikier2023apocalypse} proposed a first-order algorithm by equipping \PGD\ with a number of rank decrease attempts and proved that every accumulation point is stationary. Furthermore, Olikier et~al.~\cite{olikier2023first} developed a framework for first-order optimization on general stratified sets of matrices. Levin et~al.~\cite{levin2023finding} proposed a formulation in which the feasible set $\mathbb{R}_{\leq r}^{m\times n}$ was parametrized by a (product) manifold $\tensM$ (e.g., $\tensM=\mathbb{R}^{m\times r}\times\mathbb{R}^{n\times r}$) along with a lift $\varphi:\tensM\to\mathbb{R}_{\leq r}^{m\times n}$ satisfying $\varphi(\tensM)=\mathbb{R}_{\leq r}^{m\times n}$. Subsequently, the low-rank matrix optimization problem was reformulated as minimizing $f\circ\varphi$ on the manifold~$\tensM$, which can be solved by Riemannian optimization methods (see, e.g.,~\cite{absil2009optimization,boumal2023intromanifolds} for an overview). Olikier et~al.~\cite{olikier2023gauss} proposed Gauss--Southwell type descent methods on matrix varieties.

\revise{The low-rank Tucker tensor optimization problems are much more challenging than the matrix case, due to the intricate geometry of Tucker tensors.} In the light of low-rank matrix problems, the low-rank Tucker tensor optimization has several different formulations. One type is minimizing $f$ on a smooth manifold $\tensM_{\vecr}$ of tensors with fixed Tucker rank, i.e., 
\begin{equation}
  \label{eq: fixed-rank problem}
  \begin{aligned}
      \min_\tensX\ &\ \ \ \ \ \ f(\tensX)\\ 
      \subjectto\ &\quad \tensX\in\tensM_{\vecr}:=\{\tensX\in\mathbb{R}^{n_1\times n_2\times\cdots\times n_d}:\ranktc(\tensX)=\vecr\}.
  \end{aligned}
\end{equation}
Uschmajew and Vandereycken~\cite{uschmajew2013geometry} showed that the set of tensors with fixed Tucker rank is a submanifold of $\mathbb{R}^{n_1\times\cdots\times n_d}$ and provided explicit characterizations of the tangent space. Kressner et~al.~\cite{kressner2014low} proposed the Riemannian conjugate gradient method for low-rank Tucker tensor completion. Kasai and Mishra~\cite{kasai2016low} considered the quotient geometry of the product manifold from Tucker decomposition and proposed the Riemannian conjugate gradient method for tensor completion under a preconditioned metric. Interested readers are referred to~\cite{uschmajew2020geometric} for an overview of geometric methods on fixed-rank matrix and tensor manifolds. Nevertheless, it is worth noting that the fixed-rank Tucker manifold $\tensM_{\vecr}$ is not a closed set in $\mathbb{R}^{n_1\times n_2\times\cdots\times n_d}$. As a consequence, the classical convergence results established in Riemannian optimization (e.g.,~\cite{boumal2019global}) do not hold for accumulation points located on the boundary~$\tensM_{\leq\vecr}\setminus\tensM_{\vecr}$.

Instead of solving the fixed-rank optimization problem~\eqref{eq: fixed-rank problem}, we consider minimizing $f$ on the closure of $\tensM_{\vecr}$, i.e., solving the optimization problem~\eqref{eq: problem (P)}. On the one hand, unlike the well-explored geometric properties of matrix varieties~\cite{schneider2015convergence} or fixed-rank Tucker tensor manifold~\cite{uschmajew2013geometry}, the geometric counterpart for Tucker tensor varieties (e.g., the tangent cone at rank-deficient points in $\tensM_{\leq\vecr}\setminus\tensM_\vecr$) is unknown. On the other hand, $\tensM_{\leq\vecr}$ can be constructed by the intersection of $d$ tensorized matrix varieties of all unfolding matrices along different modes. However, the relationship between the tangent cone of $\tensM_{\leq\vecr}$ and the tangent cones of matrix varieties along different modes is unclear. Therefore, the geometry of Tucker tensor varieties can not be easily generalized from matrix varieties. The existing work in geometry and optimization on Tucker tensor varieties is quite sparse. Luo and Qi~\cite{luo2023optimality} studied the optimality conditions of~\eqref{eq: problem (P)} by exploiting a subset of the normal cone while the formulation of tangent cone remains unclear. The unknown geometry of Tucker tensor varieties hampers one from designing optimization methods on Tucker tensor varieties. In summary, we are motivated to seek an explicit parametrization of the tangent cone and to employ the established results to design geometric methods for~\eqref{eq: problem (P)}. 

In addition, the question ``how to choose an appropriate rank parameter $\vecr$ in low-rank optimization'' is appealing in practice. We observe that the numerical performance of optimization methods in low-rank optimization can be sensitive to the choice of rank parameter $\vecr$; see, e.g.,~\cite{zhou2016riemannian,gao2022riemannian,dong2022new}. Opting for a  larger rank parameter $\vecr$ is able to enlarge the search space and potentially leads to a better optimum. However, if $\vecr$ is too large, it triggers more storage and computational costs. Moreover, the optimization methods may converge to rank-deficient points due to the non-closed nature of~$\tensM_{\vecr}$. In view of these obstacles, we are motivated to design rank-adaptive strategies, tailored for optimization on Tucker tensor varieties, that are able to find an appropriate rank parameter during iterations.

\paragraph{Contributions}
In this paper, we delve into the geometry of Tucker tensor varieties $\tensM_{\leq\vecr}$ and propose geometric and rank-adaptive methods for optimization on Tucker tensor varieties. Specifically, we first provide new equivalent reformulations for both the tangent cone of matrix varieties and the tangent space of Tucker tensor manifold. Then, an explicit characterization of the tangent cone of $\tensM_{\leq\vecr}$ is constructed, paving the way for developing optimization methods on~$\tensM_{\leq\vecr}$. In order to bypass the computationally intractable metric projection onto the tangent cone, we propose an approximate projection by choosing bases from $d$ orthogonal complements of the corresponding $d$ factor matrices of a Tucker tensor. To the best of our knowledge, this is the first work investigating the geometry of Tucker tensor varieties.

Taking advantage of the derived geometric properties, we propose the gradient-related approximate projection (GRAP) method in which the iterate is given by
\[\tensX^{(t+1)}=\proj_{\leq\vecr}^{\mathrm{HO}}\left(\tensX^{(t)}+s^{(t)}\approj_{\tangent_{\tensX^{(t)}}\!\tensM_{\leq\vecr}}(-\nabla f(\tensX^{(t)}))\right),\] 
where $\proj_{\leq\vecr}^{\mathrm{HO}}$ projects a tensor onto the Tucker tensor varieties $\tensM_{\leq\vecr}$ by the higher-order singular value decomposition and $\approj$ is the proposed approximate projection. The GRAP method can be viewed as a generalization of the Riemannian gradient method for optimization on $\tensM_\vecr$, while it is capable of dealing with rank-deficient points, and the convergence can still be guaranteed via \L{}ojasiewicz inequality. Additionally, we propose a new approximate projection operator $\hat{\proj}$ by leveraging partial information of the tangent cone. Surprisingly, the operator $\hat{\proj}$ is apt to develop a method without projection onto $\tensM_{\leq\vecr}$, namely, retraction-free, which iterates as 
\[\tensX^{(t+1)}=\tensX^{(t)}+s^{(t)}\hat{\proj}_{\tangent_{\tensX^{(t)}}\!\tensM_{\leq\vecr}}(-\nabla f(\tensX^{(t)}))\]
while preserving feasibility, i.e., $\tensX^{(t+1)}\in\tensM_{\leq\vecr}$, and thus saves computational costs. This method is called the retraction-free gradient-related approximate projection (rfGRAP) method and its convergence is also proved.

In order to identify an appropriate Tucker rank during iterations, we resort to the geometric illustration of the tangent cone and propose a provable Tucker rank-adaptive method (TRAM), which consists of line search on a fixed-rank manifold, rank-decreasing, and rank-increasing procedures. Specifically, a rank-decreasing procedure is aimed to save storage and eliminate singularity if an iterate is nearly rank-deficient. If an iterate appears to be nearly stationary on the fixed-rank manifold, we increase the rank in pursuit of a better optimum.

We compare the proposed GRAP, rfGRAP, and TRAM methods with existing methods in tensor completion on synthetic datasets, hyperspectral images, and movie ratings. The numerical results suggest that the proposed methods perform better than the state-of-the-art methods under different rank parameter selections. Specifically, the proposed TRAM method demonstrates its capability to find an appropriate rank in practice.

\paragraph{Organization}
We introduce the geometric tools of matrix varieties and notation for tensor operations in section~\ref{sec: Preliminaries}. In section~\ref{sec: geom of Tucker tensor varieties}, an explicit parametrization of the tangent cone of Tucker tensor varieties is provided along with an approximate projection onto the tangent cone. We propose the geometric methods, GRAP and rfGRAP, in sections~\ref{sec: GRAP} and~\ref{sec: rfGRAP}, and we design the Tucker rank-adaptive method in section~\ref{sec: TRAM}. Section~\ref{sec: experiments} reports the numerical performance of proposed methods in tensor completion. Finally, we draw the conclusion in section~\ref{sec: conclusion}.

\section{Low-rank manifolds and matrix varieties}\label{sec: Preliminaries}
In this section, the required geometry of matrix manifold and matrix varieties, which is closely related to the Tucker tensor varieties, is introduced first. Then, we describe the \revise{notation for tensor operations}, the definition of Tucker decomposition, and the geometry of the fixed-rank Tucker manifold.

Given a nonempty subset $C$ of $\mathbb{R}^{n_1\times n_2\times\cdots\times n_d}$, the (Bouligand) \emph{tangent cone} of $C$ at $\tensX\in C$ is defined by
\begin{equation*}
        \tangent_\tensX\!C:=\{\tensV\in\mathbb{R}^{n_1\times n_2\times\cdots\times n_d}:\exists t^{(i)}\to 0,\ \tensX^{(i)}\to \tensX\text{ in }C,\ \subjectto\ \frac{\tensX^{(i)}-\tensX}{t^{(i)}}\to\tensV\}.
\end{equation*}
The set $\normal_\tensX\! C=(\tangent_\tensX\! C)^\circ:=\{\tensN\in\mathbb{R}^{n_1\times n_2\times\cdots\times n_d}:\langle\tensN,\tensV\rangle\leq 0\ \text{for all}\ \tensV\in\tangent_\tensX\! C\}$ is called the \emph{normal cone} of $C$ at $\tensX$. Note that if $C$ is a manifold, the tangent cone $\tangent_\tensX\!C$ (normal cone $\normal_\tensX\!C$) is referred to as the tangent space (normal space) of $C$ at $\tensX$. A mapping $\retr:\bigcup_{\tensX\in C}\{\tensX\}\times\tangent_\tensX\!C\to C$ is called a \revise{\emph{retraction}~\cite[\S 3.1.2]{hosseini2019gradient} if for all $\tensX\in C$ and $\tensV\in\tangent_\tensX\!C$ it holds that $\lim_{t\to 0^+}(\retr_\tensX(t\tensV)-\tensX-t\tensV)/t=0$}.

\begin{definition}\label{def: stationary}
    A point $\tensX\in C$ is called \emph{stationary} for the optimization problem~\eqref{eq: problem (P)} if $\langle\nabla f(\tensX),\tensV\rangle\geq 0$ holds for all~$\tensV\in\tangent_\tensX\!C$, which is equivalent to $-\nabla f(\tensX)\in\normal_\tensX\!C$ or \revise{the projected anti-gradient satisfies} $\|\proj_{\tangent_\tensX\!C}(-\nabla f(\tensX))\|_\frob=0$. 
\end{definition}

\subsection{Low-rank matrix manifold and varieties}\label{subsec: low-rank matrix}
Let $m,n,r$ be positive integers satisfying $r\leq\min\{m,n\}$. Given a matrix $\matx\in\mathbb{R}^{m\times n}$, the image of $\matx$ and its orthogonal complement are defined by $\Span(\matx):=\{\matx\vecy:\vecy\in\mathbb{R}^n\}\subseteq\mathbb{R}^m$ and $\Span(\matx)^\perp:=\{\vecy\in\mathbb{R}^m:\langle\vecx,\vecy\rangle=0\ \text{for all}\ \vecx\in\Span(\matx)\}$ respectively. The fixed-rank matrix manifold and matrix varieties are denoted by $\mathbb{R}^{m\times n}_{r}$ and $\mathbb{R}^{m\times n}_{\leq r}$ respectively. The set $\St(r,n):=\{\matx\in\mathbb{R}^{n\times r}:\matx^\T\matx=\matI_r\}$ is the \emph{Stiefel manifold}, where $\matI_r$ is the $r\times r$ identity matrix. The orthogonal group is denoted by $\mathcal{O}(n):=\{\matQ\in\mathbb{R}^{n\times n}:\matQ^\T\matQ=\matQ\matQ^\T=\matI_n\}$. 

\paragraph{Geometry of matrix manifold and varieties} The tangent and normal cones play important roles in optimization on matrix varieties $\mathbb{R}^{m\times n}_{\leq r}$. Therefore, we introduce the explicit formulae of tangent and normal cones of matrix varieties (see~\cite[Proposition 2.1]{vandereycken2013low} and~\cite[Theorem 3.2]{schneider2015convergence}) as follows. 
\begin{proposition}\label{prop: geom of matrix varieties}
    Given $\matx\in\mathbb{R}_{\underline{r}}^{m\times n}$ with $\underline{r}\leq r$. A thin SVD of $\matx$ is $\matx=\matu\Sigma\matv^\T$, where $\matu\in\St(\underline{r},m)$, $\matv\in\St(\underline{r},n)$ and $\Sigma=\diag(\sigma_1,\sigma_2,\dots,\sigma_{\underline{r}})$ with $\sigma_1\geq\sigma_2\geq\cdots\geq\sigma_{\underline{r}}>0$. It holds that
    \begin{equation*}
        \begin{aligned}
            \tangent_\matx\!\mathbb{R}^{m\times n}_{\underline{r}}&=\left\{\begin{bmatrix}
                \matu & \matu^\perp
            \end{bmatrix}
            \begin{bmatrix}
                \mathbb{R}^{\underline{r}\times \underline{r}} & \mathbb{R}^{\underline{r}\times (n-\underline{r})}\\
                \mathbb{R}^{(m-\underline{r})\times \underline{r}} & 0
            \end{bmatrix}
            \begin{bmatrix}
                \matv & \matv^\perp
            \end{bmatrix}^\T\right\},\\
            \normal_\matx\!\mathbb{R}^{m\times n}_{\underline{r}}&=\left\{\begin{bmatrix}
                \matu & \matu^\perp
            \end{bmatrix}
            \begin{bmatrix}
                0 & 0\\
                0 & \mathbb{R}^{(m-\underline{r})\times (n-\underline{r})}
            \end{bmatrix}
            \begin{bmatrix}
                \matv & \matv^\perp
            \end{bmatrix}^\T\right\},\\
            \tangent_\matx\!\mathbb{R}^{m\times n}_{\leq r}&=\left\{\begin{bmatrix}
                \matu & \matu^\perp
            \end{bmatrix}
            \begin{bmatrix}
                \mathbb{R}^{\underline{r}\times \underline{r}} & \mathbb{R}^{\underline{r}\times (n-\underline{r})}\\
                \mathbb{R}^{(m-\underline{r})\times \underline{r}} & \mathbb{R}^{(m-\underline{r})\times (n-\underline{r})}_{\leq (r-\underline{r})}
            \end{bmatrix}
            \begin{bmatrix}
                \matv & \matv^\perp
            \end{bmatrix}^\T\right\},\\
            \normal_\matx\!\mathbb{R}^{m\times n}_{\leq r}&=
            \left\{
                \begin{array}{ll}
                    \normal_\matx\!\mathbb{R}^{m\times n}_{r},&\ \text{if}\ \underline{r}=r;\\
                    \{0\},&\ \text{if}\ \underline{r}<r,
                \end{array}
            \right.
        \end{aligned}
    \end{equation*}
    for any $\matu^\perp\in\St(m-\underline{r},m)$ with $\Span(\matu^\perp)=\Span(\matu)^\perp$ and $\matv^\perp\in\St(n-\underline{r},n)$ with $\Span(\matv^\perp)=\Span(\matv)^\perp$. 
\end{proposition}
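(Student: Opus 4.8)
The plan is to fix the orthogonal bases $\begin{bmatrix}\matu & \matu^\perp\end{bmatrix}\in\mathcal{O}(m)$ and $\begin{bmatrix}\matv & \matv^\perp\end{bmatrix}\in\mathcal{O}(n)$ and use them to identify every $\matz\in\mathbb{R}^{m\times n}$ with its four blocks $\matA=\matu^\T\matz\matv$, $\matB=\matu^\T\matz\matv^\perp$, $\matC=\matu^{\perp\T}\matz\matv$, and $\matD=\matu^{\perp\T}\matz\matv^\perp$. Since this change of coordinates is an isometry that splits the Frobenius inner product blockwise, all four claims reduce to statements about the admissible blocks. I would first settle the fixed-rank manifold. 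Writing a smooth curve on $\mathbb{R}^{m\times n}_{\underline{r}}$ as $\gamma(t)=\matu(t)\matS(t)\matv(t)^\T$ with $\matu(t),\matv(t)$ of orthonormal columns and $\matS(t)$ invertible, and differentiating at $t=0$, the lower-right block $\matu^{\perp\T}\dot{\gamma}(0)\matv^\perp$ vanishes because each term carries a factor $\matv^\T\matv^\perp=0$ or $\matu^{\perp\T}\matu=0$; conversely the $\matA,\matB,\matC$ blocks are each realized by an explicit rank-$\underline{r}$ curve. Thus $\tangent_\matx\mathbb{R}^{m\times n}_{\underline{r}}$ is exactly the set of blocks with $\matD=0$, and the normal space is its orthogonal complement, i.e.\ the blocks with $\matA=\matB=\matC=0$.

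For the tangent cone of the variety I would prove the two inclusions separately. For $\supseteq$, given a target $\matV$ whose $\matD$-block has rank $s\le r-\underline{r}$, I would exhibit the curve
\[
\matx(t)=(\matu+t\,\matu^\perp\matC\Sigma^{-1})(\Sigma+t\matA)(\matv+t\,\matv^\perp\matB^\T\Sigma^{-1})^\T+t\,\matu^\perp\matD\matv^{\perp\T},
\]
whose column space lies in $\Span(\matu+t\,\matu^\perp\matC\Sigma^{-1})+\Span(\matu^\perp\matD)$, a subspace of dimension at most $\underline{r}+s\le r$; hence $\matx(t)\in\mathbb{R}^{m\times n}_{\le r}$, while $\matx(0)=\matx$ and a short computation gives $\dot{\matx}(0)=\matV$, placing $\matV$ in the tangent cone.

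The harder inclusion $\subseteq$ is the crux. Taking sequences $t^{(i)}\to0^+$ and $\matx^{(i)}\to\matx$ in the variety with $(\matx^{(i)}-\matx)/t^{(i)}\to\matV$, I would pass to the block coordinates of $\matx^{(i)}$, where $\matA_i\to\Sigma$ (invertible for large $i$) while $\matB_i,\matC_i,\matD_i\to0$ with $\matB_i/t^{(i)}\to\matB$, $\matC_i/t^{(i)}\to\matC$, $\matD_i/t^{(i)}\to\matD$. Since $\rank(\matx^{(i)})\le r$ and $\matA_i$ is invertible, the Schur complement identity forces $\rank(\matD_i-\matC_i\matA_i^{-1}\matB_i)\le r-\underline{r}$. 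The key observation is that the cross term is second order, $\matC_i\matA_i^{-1}\matB_i=O((t^{(i)})^2)$, so $(\matD_i-\matC_i\matA_i^{-1}\matB_i)/t^{(i)}\to\matD$ as well; because the set of matrices of rank at most $r-\underline{r}$ is closed and invariant under positive scaling, the limit $\matD$ inherits $\rank(\matD)\le r-\underline{r}$. I expect this Schur-complement-plus-closedness step to be the main obstacle, as it is precisely where the rank budget $r-\underline{r}$ emerges; the $\matA,\matB,\matC$ blocks of $\matV$ stay unconstrained, matching the claimed set.

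Finally, the normal cone follows by taking the polar of the tangent cone in block coordinates. Since the tangent cone contains the full linear subspace $\{\matD=0\}$, any $\matN$ in the polar must annihilate it, forcing its $\matA,\matB,\matC$ blocks to vanish, so $\matN$ is carried by its $\matD$-block $\matN_D$ and the polarity condition becomes $\langle\matN_D,\matD\rangle\le0$ for all $\matD$ of rank at most $r-\underline{r}$. If $\underline{r}=r$ this is vacuous and $\matN_D$ is free, recovering $\normal_\matx\mathbb{R}^{m\times n}_{r}$; if $\underline{r}<r$, then rank-one matrices and their negatives are admissible, and since they span the block space we obtain $\matN_D=0$, i.e.\ $\normal_\matx\mathbb{R}^{m\times n}_{\le r}=\{0\}$.
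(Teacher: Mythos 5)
The paper never proves this proposition: it is imported as a known result, with pointers to Vandereycken (Prop.~2.1) for the fixed-rank manifold and Schneider--Uschmajew (Thm.~3.2) for the variety, so there is no internal proof to compare yours against. Judged on its own, your argument is correct and self-contained, and its crux is exactly the right mechanism: after the isometric block reduction, invertibility of $\matA_i\to\Sigma$ gives the rank identity $\rank(\matx^{(i)})=\underline{r}+\rank(\matD_i-\matC_i\matA_i^{-1}\matB_i)\leq r$, the cross term $\matC_i\matA_i^{-1}\matB_i=O((t^{(i)})^2)$ is negligible at first order, and closedness together with invariance under positive scaling of $\mathbb{R}^{(m-\underline{r})\times(n-\underline{r})}_{\leq(r-\underline{r})}$ passes the rank bound to the limit $\matD$. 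The explicit curve for the reverse inclusion checks out ($\matx(0)=\matx$, $\dot{\matx}(0)=\matV$, and the column space has dimension at most $\underline{r}+\rank(\matD)\leq r$), and the polar computations for both normal cones (annihilation of the linear part, then either a vacuous constraint when $\underline{r}=r$ or rank-one test matrices forcing $\matN_D=0$ when $\underline{r}<r$) are sound. This is essentially the same style of argument as in the cited Schneider--Uschmajew source; nothing in it is redundant with the paper itself.

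One step deserves repair. The paper defines the tangent space of $\mathbb{R}^{m\times n}_{\underline{r}}$ as the Bouligand cone (limits of difference quotients along arbitrary sequences in the set), whereas your inclusion $\tangent_\matx\mathbb{R}^{m\times n}_{\underline{r}}\subseteq\{\matD=0\}$ only differentiates smooth curves, and moreover presumes that every smooth curve in $\mathbb{R}^{m\times n}_{\underline{r}}$ factors smoothly as $\matu(t)\matS(t)\matv(t)^\T$. Both points are standard (the factorization follows by taking $\matu(t)=\qf(\gamma(t)\matv)$ and $\matv(t)=\qf(\gamma(t)^\T\matu(t))$ for small $t$), but the cleanest fix is already in your own toolkit: since $\mathbb{R}^{m\times n}_{\underline{r}}\subseteq\mathbb{R}^{m\times n}_{\leq\underline{r}}$, your Schur-complement argument applied with $r=\underline{r}$ shows that any Bouligand tangent vector of the manifold has $\rank(\matD)\leq 0$, i.e.\ $\matD=0$, for arbitrary sequences and with no factorization lemma; your explicit curves then give the reverse inclusion. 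With that substitution the proof is complete as stated.
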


Note that $(\matu^\perp)^\T\matu=0$ and $(\matv^\perp)^\T\matv=0$. The choice of $\matu^\perp$ and $\matv^\perp$ is not unique, but the results in Proposition~\ref{prop: geom of matrix varieties} are independent of a specific choice of $\matu^\perp$ and $\matv^\perp$. Actually, the tangent space and normal space can be uniquely represented in the sense of tensor product by
    \begin{equation*}
        \begin{aligned}
            \tangent_\matx\!\mathbb{R}^{m\times n}_{\underline{r}}&=\Span(\matu)\otimes\Span(\matv)+\Span(\matu)^\perp\otimes\Span(\matv)+\Span(\matu)\otimes\Span(\matv)^\perp,\\
            \normal_\matx\!\mathbb{R}^{m\times n}_{\underline{r}}&=\Span(\matu)^\perp\otimes\Span(\matv)^\perp.
        \end{aligned}
    \end{equation*}
    The direct sum of tangent and normal spaces forms the Euclidean space $\mathbb{R}^{m\times n}$, i.e., 
    \begin{equation*}
        \begin{aligned}
            \mathbb{R}^{m\times n}&=\tangent_\matx\!\mathbb{R}^{m\times n}_{\underline{r}}+\normal_\matx\!\mathbb{R}^{m\times n}_{\underline{r}}\\
            &=\left\{\begin{bmatrix}
                \matu & \matu^\perp
            \end{bmatrix}
            \vcenter{\hbox{\includegraphics[scale=1.2]{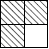}}}
            \begin{bmatrix}
                \matv & \matv^\perp
            \end{bmatrix}^\T\right\}+\left\{\begin{bmatrix}
                \matu & \matu^\perp
            \end{bmatrix}
            \vcenter{\hbox{\includegraphics[scale=1.2]{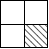}}}
            \begin{bmatrix}
                \matv & \matv^\perp
            \end{bmatrix}^\T\right\},
        \end{aligned}
    \end{equation*}
    where a shaded square represents an arbitrary matrix and the blank represents the matrix with zero elements.

\paragraph{\revise{New parametrization of the tangent cone}}
We propose a new reduced \revise{parametrization} of the tangent cone of matrix varieties. For any element $\Xi$ in the tangent cone $\tangent_\matx\!\mathbb{R}^{m\times n}_{\leq r}$, there exists $\matC\in\mathbb{R}^{\underline{r}\times \underline{r}}$, $\matd\in\mathbb{R}^{(m-\underline{r})\times \underline{r}}$, $\matE\in\mathbb{R}^{\underline{r}\times (n-\underline{r})}$ and $\matf\in\mathbb{R}^{(m-\underline{r})\times (n-\underline{r})}_{\leq (r-\underline{r})}$, such that
\begin{equation}
    \label{eq: vec in tangent cone}
    \Xi=\matu\matc\matv^\T+\matu^\perp\matd\matv^\T+\matu\mate(\matv^\perp)^\T+\matu^\perp\matf(\matv^\perp)^\T.
\end{equation}
\revise{Subsequently, we decompose $\Xi$ by using the decomposition $\matf=\tilde{\matu}\matS\tilde{\matv}^\T$, where $\tilde{\matu}\in\St(r-\underline{r},m-\underline{r})$, $\tilde{\matv}\in\St(r-\underline{r},n-\underline{r})$, and $\matS\in\mathbb{R}^{(r-\underline{r})\times (r-\underline{r})}$. Note that $\matS$ is not necessarily of full rank. Since $r-\underline{r}\leq\min\{m-\underline{r},n-\underline{r}\}$, there exist matrices $\tilde{\matu}_2\in\St(m-r,m-\underline{r})$ and $\tilde{\matv}_2\in\St(n-r,n-\underline{r})$,} such that $[\tilde{\matu}\ \tilde{\matu}_2]\in\mathcal{O}(m-\underline{r})$ and $[\tilde{\matv}\ \tilde{\matv}_2]\in\mathcal{O}(n-\underline{r})$. In fact, it holds that $\Span(\tilde{\matu}_2)=\Span(\tilde{\matu})^\perp$ and $\Span(\tilde{\matv}_2)=\Span(\tilde{\matv})^\perp$. As a result, we yield a new equivalent (reduced) parametrization of $\Xi$ as follows
\begin{align}
    \Xi&=\matu\matc\matv^\T+\matu^\perp\matd\matv^\T+\matu\matE(\matv^\perp)^\T+\matu^\perp\tilde{\matu}\matS\tilde{\matv}^\T(\matv^\perp)^\T\nonumber\\
    &=\matu\matc\matv^\T+\matu^\perp\matd\matv^\T+\matu\mate(\matv^\perp)^\T+\matu_1\matS\matv_1^\T\nonumber\\
    &=\matu\matc\matv^\T+\matu^\perp[\tilde{\matu}\ \tilde{\matu}_2][\tilde{\matu}\ \tilde{\matu}_2]^\T\matd\matv^\T+\matu\mate[\tilde{\matv}\ \tilde{\matv}_2][\tilde{\matv}\ \tilde{\matv}_2]^\T(\matv^\perp)^\T+\revise{\matu_1\matS\matv_1^\T}\nonumber\\
    &=\begin{bmatrix}
        \matu & \matu_1 & \matu_2
    \end{bmatrix}
    \left[
        \begin{array}{cc:c}
            \matc & ~\mate\tilde{\matv} & ~\mate\tilde{\matv}_2\\
            \tilde{\matu}^\T\matd & ~\revise{\matS} & 0\\
            \cdashline{1-2}[4pt/2pt]
            \multicolumn{3}{c}{}\\[-3mm]
            \tilde{\matu}_2^\T\matd & \multicolumn{2}{l}{\ \!~~0\ \ \ \ \ ~\!0}
        \end{array}
    \right]
    \begin{bmatrix}
        \matv & \matv_1 & \matv_2
    \end{bmatrix}^\T,\nonumber
\end{align}
\revise{where $\matu_1:=\matu^\perp\tilde{\matu}$, $\matu_2:=\matu^\perp\tilde{\matu}_2$, $\matv_1:=\matv^\perp\tilde{\matv}$, and $\matv_2:=\matv^\perp\tilde{\matv}_2$. Figure~\ref{fig: new representation of matrix tangent cone} provides an equivalent illustration of an element in $\tangent_\matx\!\mathbb{R}^{m\times n}_{\leq r}$, where a dashed square represents an arbitrary matrix in $\mathbb{R}^{r\times r}$. The illustration is vital to the development of the tangent cone of Tucker tensor varieties; see Theorem~\ref{thm: tangent cone of Tucker} for details.

\begin{figure}[htbp]
    \centering
    \begin{tikzpicture}[scale=1.2]
        \node at (0,0) {\includegraphics[scale=1.4]{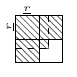}};
        \node at (-1.4,0) {$
                \begin{bmatrix}
                    \matu & \matu_1 & \matu_2
                \end{bmatrix}$};
        \node at (1.4,0.03) {$
                \begin{bmatrix}
                    \matv & \matv_1 & \matv_2
                \end{bmatrix}^\T$};
    \end{tikzpicture}
    \caption{Illustration of an element in $\tangent_{\matx}\!\mathbb{R}^{m\times n}_{\leq r}$ with parameters $\matu_1\in\St(r-\underline{r},m),\matv_1\in\St(r-\underline{r},n),\matu_2\in\St(m-r,m),\matv_2\in\St(n-r,n)$ satisfying $[\matu\ \matu_1\ \matu_2]\in\mathcal{O}(m)$ and $[\matv\ \matv_1\ \matv_2]\in\mathcal{O}(n)$}
    \label{fig: new representation of matrix tangent cone}
\end{figure}
}

It is worth noting that we can further develop a compact parametrization in the way of full rank decomposition for $\matf$ in~\eqref{eq: vec in tangent cone}. Specifically, we consider the decomposition $\matf=\hat{\matu}\hat{\matS}\hat{\matv}^\T$, where $\hat{\matu}\in\St(\ell,m-\underline{r})$, $\hat{\matv}\in\St(\ell,n-\underline{r})$, $\hat{\matS}\in\mathbb{R}_\ell^{\ell\times \ell}$ is a full rank matrix, and $\ell=\rank(\matf)$. Then, we obtain the compact parametrization
\begin{align}
    \Xi&=\matu\matc\matv^\T+\matu^\perp\matd\matv^\T+\matu\mate(\matv^\perp)^\T+\hat{\matu}_1\hat{\matS}\hat{\matv}_1^\T\label{eq: new representation of matrix 01},
\end{align}
where $\hat{\matu}_1:=\matu^\perp\hat{\matu}$ and $\hat{\matv}_1:=\matv^\perp\hat{\matv}$. Since $\hat{\matS}$ is of full rank, the representation~\eqref{eq: new representation of matrix 01} is unique in the sense of right orthogonal group actions on $\hat{\matu}_1$ and $\hat{\matv}_1$. In fact, we observe that $\proj_{\matu}^\perp\Xi\proj_{\matv}^\perp=\hat{\matu}_1\hat{\matS}\hat{\matv}_1^\T$,
where $\proj_\matu:=\matu\matu^\T$, $\proj_\matu^\perp:=\matI_{m}-\proj_\matu$, $\proj_\matv:=\matv\matv^\T$, and $\proj_\matv^\perp:=\matI_{n}-\proj_\matv$. Therefore, it follows that
\[\Span(\hat{\matu}_1)=\Span(\proj_{\matu}^\perp\Xi\proj_{\matv}^\perp)\quad\text{and}\quad\Span(\hat{\matv}_1)=\Span((\proj_{\matu}^\perp\Xi\proj_{\matv}^\perp)^\T),\]
which implies that the spaces are unique.

\begin{remark}\label{rem: 1}
    We observe that the tangent cone can be alternatively decomposed as 
    \[\tangent_\matx\!\mathbb{R}^{m\times n}_{\leq r}=\tangent_\matx\!\mathbb{R}^{m\times n}_{\underline{r}}+\normal_{\leq (r-\underline{r})}(\matx),\]
    where $\normal_{\leq (r-\underline{r})}(\matx):=\left\{\matN\in\normal_\matx\!\mathbb{R}^{m\times n}_{\underline{r}}: \rank(\matN)\leq(r-\underline{r})\right\}$. As suggested in~\cite{gao2022riemannian}, given a matrix $\matx\in\mathbb{R}^{m\times n}_{\underline{r}}$ with $\underline{r}<r$ and a vector in the cone $\matV\in \normal_{\leq (r-\underline{r})}(\matx)\setminus\{0\}$, it holds that \[\rank(\matx+s\matV)\in(\underline{r},r]\]
    for $s>0$. The principle can be applied to increase the rank of $\matx$. A similar observation can be found in tensor cases; see section~\ref{subsec: rank increase} for details.
\end{remark}

\paragraph{Metric projections}
Given a matrix $\mata=\sum_{k=1}^I\sigma_k^{}\vecu_k^{}\vecv_k^\T\in\mathbb{R}^{m\times n}_I$, where $\sigma_1\geq\cdots\geq\sigma_I>0$, $\vecu_k\in\mathbb{R}^m$ and $\vecv_k\in\mathbb{R}^n$ are singular vectors of $\mata$. The \emph{metric projection} of $\mata$ onto the matrix varieties $\mathbb{R}^{m\times n}_{\leq r}$ is defined by
\[\proj_{\leq r}(\mata):=\argmin_{\matx\in\mathbb{R}^{m\times n}_{\leq r}}\|\mata-\matx\|_\frob^2.\]
By using the Eckart--Young theorem, the metric projection exists and \revise{is given by}
\[\proj_{\leq r}(\mata)=
\left\{
    \begin{array}{ll}
        \mata,&\ \text{if}\ I\leq r,\\
    \sum_{k=1}^r\sigma_k^{}\vecu_k^{}\vecv_k^\T,&\ \text{if}\ I>r.
    \end{array}
\right.
\]
Note that $\proj_{\leq r}(\mata)$ is not unique when the singular value $\sigma_{r}$ is equal to $\sigma_{r+1}$, but we can always choose $\sum_{k=1}^r\sigma_k^{}\vecu_k^{}\vecv_k^\T$ in practice. \revise{When $\rank(\mata)\geq r$}, the metric projection $\proj_r(\mata)$ onto the fixed-rank manifold $\mathbb{R}^{m\times n}_{r}$ \revise{equals to $\proj_{\leq r}(\mata)$. However, when $\rank(\mata)<r$, the matrix $\mata$ can be approximated arbitrarily closely by a rank-$r$ matrix and thus $\proj_r(\mata)$ does not exist.}

Moreover, in view of Proposition~\ref{prop: geom of matrix varieties}, given a matrix $\mata\in\mathbb{R}^{m\times n}$, the orthogonal projections onto the tangent space and tangent cone are given by
    \begin{equation}\label{eq: projection onto tangents matrix}
        \begin{aligned}
            \proj_{\tangent_\matx\!\mathbb{R}^{m\times n}_{\underline{r}}}\!\mata&=\proj_\matu\!\mata\!\proj_\matv+\proj_\matu^\perp\!\mata\!\proj_\matv+\proj_\matu\!\mata\!\proj_\matv^\perp,\\
            \proj_{\tangent_\matx\!\mathbb{R}^{m\times n}_{\leq r}}\!\mata&=\proj_{\tangent_\matx\!\mathbb{R}^{m\times n}_{\underline{r}}}\!\mata+\proj_{\leq (r-\underline{r})}\!\left(\proj_\matu^\perp\!\mata\!\proj_\matv^\perp\right).
        \end{aligned}
    \end{equation}
    \revise{In practice, $\proj_\matu^\perp\!\mata\!\proj_\matv^\perp$ can be efficiently computed by $\proj_\matu^\perp\!\mata\!\proj_\matv^\perp=\mata-\matu(\matu^\T\mata)-(\mata\matv)\matv^\T+\matu(\matu^\T\mata\matv)\matv^\T$.}

\subsection{Tucker decomposition: definition and geometry}
We introduce \revise{notation for tensor operations}. Denote the index set $\{1,2,\dots,n\}$ by~$[n]$. The inner product between two tensors $\tensX,\tensY\in\mathbb{R}^{n_1\times n_2\times\cdots\times n_d}$ is defined by $\langle\tensX,\tensY\rangle := \sum_{i_1=1}^{n_1} \cdots \sum_{i_d=1}^{n_d} \tensX({i_1,\dots,i_d})\tensY({i_1,\dots,i_d})$. The Frobenius norm of a tensor $\tensX$ is defined by $\|\tensX\|_\mathrm{F}:=\sqrt{\langle\tensX,\tensX\rangle}$. The mode-$k$ unfolding of a tensor $\tensX \in \mathbb{R}^{n_1 \times\cdots\times n_d}$ is denoted by a matrix $\matx_{(k)}\in\mathbb{R}^{n_k\times n_{-k}} $ for $k=1,\dots,d$, where $n_{-k}:=\prod_{i\neq k}n_i$. The $ (i_1,i_2,\dots,i_d)$-th entry of $\tensX$ corresponds to the $(i_k,j)$-th entry of $\matx_{(k)}$, where
$ j = 1 + \sum_{\ell \neq k, \ell = 1}^d(i_\ell-1)J_\ell$ with $J_\ell = \prod_{m = 1,m \neq k}^{\ell-1} n_m$. The tensorization operator maps a matrix $\matx_k\in\mathbb{R}^{n_k\times n_{-k}}$ to a tensor $\ten_{(k)}(\matx_k)\in\mathbb{R}^{n_1\times\cdots\times n_d}$ defined by $\ten_{(k)}(\matx_k)(i_1,\dots,i_d)=\matx_k(i_k,1 + \sum_{\ell \neq k, \ell = 1}^d(i_\ell-1)J_\ell)$ for $(i_1,\dots,i_d)\in[n_1]\times\cdots\times[n_d]$. Note that $\ten_{(k)}(\matx_{(k)})=\tensX$ holds for fixed $n_1,\dots,n_d$. Therefore, the tensorization operator is invertible. The $k$-mode product of a tensor $\tensX$ and a matrix $\mata\in\mathbb{R}^{n_k\times M}$ is denoted by $\tensX\times_k\mata\in\mathbb{R}^{n_1\times\cdots\times M\times\cdots\times n_d}$, where the $ (i_1,\dots,i_{k-1},j,i_{k+1},\dots,i_d)$-th entry of $\tensX\times_k\mata$ is $\sum_{i_k=1}^{n_k}x_{i_1\dots i_d}a_{ji_k}$. It holds that $(\tensX\times_k\mata)_{(k)}=\mata\matx_{(k)}$. Given $\vecu_1\in\mathbb{R}^{n_1}\setminus\{0\},\dots,\vecu_d\in\mathbb{R}^{n_d}\setminus\{0\}$, a rank-$1$ tensor of size $n_1\times\cdots\times n_d$ is defined by the outer product $\tensV:=\vecu_1\circ\cdots\circ\vecu_d$, or $v_{i_1,\dots,i_d}:=u_{1,i_1}\cdots u_{d,i_d}$ equivalently. The Kronecker product of two matrices $\mata\in\mathbb{R}^{m_1\times n_1}$ and $\matb\in\mathbb{R}^{m_2\times n_2}$ is an $(m_1m_2)$-by-$(n_1n_2)$ matrix defined by $\mata\otimes\matb:=(a_{ij}\matb)_{ij}$. The vector $\vece_i\in\mathbb{R}^{n}$ is defined by the $i$-th column of $n$-by-$n$ identity matrix $\matI_n$. \revise{Given two vectors $\vecx,\vecy\in\mathbb{R}^d$, we denote $\vecx\leq\vecy$ ($\vecx<\vecy$) if $x_i\leq y_i$ ($x_i<y_i$) for all $i\in[d]$.}

\begin{definition}[Tucker decomposition]
    Given a tensor $\tensX \in \mathbb{R}^{n_1 \times n_2\times\cdots\times n_d}$, 
    the Tucker decomposition is \[\tensX =\tensG\times_1\matu_1\times_2\matu_2\cdots\times_d\matu_d=\revise{\tensG\times_{k=1}^d\matu_k},\]
    where $\tensG\in\mathbb{R}^{r_1 \times r_2\times\cdots\times r_d}$ is a core tensor, $\matu_k\in\St(r_k,n_k)$ are factor matrices with orthogonal columns and $r_k=\rank(\matx_{(k)})$. 
\end{definition}

The Tucker rank of a tensor $\tensX$ is defined by $$\ranktc(\tensX):=\vecr=(r_1,r_2,\dots,r_d)=(\rank(\matx_{(1)}),\rank(\matx_{(2)}),\dots,\rank(\matx_{(d)})).$$ 
Figure~\ref{fig: 3D Tucker} depicts the Tucker decomposition of a third-order tensor. Note that the mode-$k$ unfolding of a tensor $\tensX =\tensG\times_1\matu_1\times_2\matu_2\cdots\times_d\matu_d$ satisfies 
\[\matx_{(k)}=\matu_k\matG_{(k)}\left(\matu_d\otimes\cdots\otimes\matu_{k+1}\otimes\matu_{k-1}\otimes\cdots\otimes\matu_{1}\right)^\T=\matu_k\matG_{(k)}((\matu_j)^{\otimes j\neq k})^\T,\]
where $(\matu_j)^{\otimes j\neq k}:=\matu_d\otimes\cdots\otimes\matu_{k+1}\otimes\matu_{k-1}\otimes\cdots\otimes\matu_{1}$ for $k\in[d]$. Notably, for a $d$-th order tensor $\tensA$, it holds that 
\begin{equation}
    \label{eq: tensor product}
    \tensA\in\bigotimes_{k=1}^d\Span(\matu_k)\iff\tensA=\tensC\times_{k=1}^d\matu_k
\end{equation}
with $\tensC\in\mathbb{R}^{r_1\times r_2\times\cdots\times r_d}$.
\begin{figure}[htbp]
    \centering
    \includegraphics[scale=0.833]{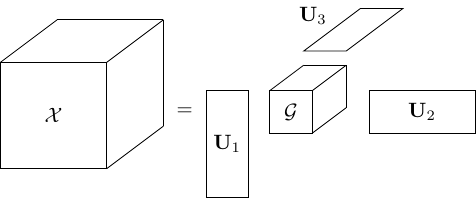}
    \caption{Tucker decomposition of a third-order tensor}
    \label{fig: 3D Tucker}
\end{figure}

\paragraph{Fixed-rank Tucker manifold}
Since $\tensM_\vecr=\{\tensX\in\mathbb{R}^{n_1\times n_2\times\cdots\times n_d}:\ranktc(\tensX)=\vecr\}$ forms a smooth manifold with dimension $\dim(\tensM_\vecr)=r_1r_2\cdots r_d+\sum_{k=1}^{d}(n_kr_k-r_k^2)$, Koch and Lubich~\cite{koch2010dynamical} provided that the tangent space of $\tensM_\vecr$ at $\tensX$ is characterized by
\begin{equation}
    \label{eq: old tangent space}
      \tangent_\tensX\!\tensM_\vecr=
      \left\{
      \begin{array}{l}
           \dot{\tensG}\times_1\matu_1\cdots\times_d\matu_d+\sum_{k=1}^d\tensG\times_k\dot{\matu}_k\times_{j\neq k}\matu_j:\vspace{4pt}\\
           \dot{\tensG}\in\mathbb{R}^{r_1\times r_2\times\cdots\times r_d},\dot{\matu}_k\in\mathbb{R}^{n_k\times r_k},\dot{\matu}_k^\T\matu_k^{}=0
      \end{array}\right\},
\end{equation}
\revise{where $\tensG\times_k\dot{\matu}_k\times_{j\neq k}\matu_j=\tensG\times_1\matu_1\cdots\times_{k-1}\matu_{k-1}\times_k\dot{\matu}_k\times_{k+1}\matu_{k+1}\cdots\times_d\matu_d$.}

Though the Tucker decomposition of a tensor is not unique~\revise{\cite[\S 4.3]{kolda2009tensor}}, the \revise{parametrization} of $\tangent_\tensX\!\tensM_\vecr$ does not \revise{depend} on a specific Tucker decomposition. \revise{Specifically, consider another Tucker decomposition of $\tensX=\breve{\tensG}\times_{k=1}^d\breve{\matu}_k$ and the associated tangent space $\breve{\tangent}_\tensX\!\tensM_\vecr$ via~\eqref{eq: old tangent space} with parameters $\breve{\tensG}\in\mathbb{R}^{r_1\times r_2\times\cdots\times r_d}$ and $\breve{\matu}_k\in\St(r_k,n_k)$ for $k\in[d]$. It suffices to show that $\tangent_\tensX\!\tensM_\vecr=\breve{\tangent}_\tensX\!\tensM_\vecr$. In view of the fact that $\Span(\matu_k)=\Span(\matx_{(k)})=\Span(\breve{\matu}_k)$, there exists $\matQ_k\in{\cal O}(r_k)$ such that $\breve{\matu}_k=\matu_k\matQ_k$ and $\breve{\tensG}=\tensG\times_{i=1}^d\matQ_i^\T$ for $k\in[d]$. For all $\tensV\in\tangent_\tensX\!\tensM_\vecr$, it holds that
\begin{equation*}
    \begin{aligned}
        \tensV&= \dot{\tensG}\times_{k=1}^d\matu_k+\sum_{k=1}^d\tensG\times_k\dot{\matu}_k\times_{j\neq k}\matu_j\\
        &= (\dot{\tensG}\times_{k=1}^d\matQ_k^\T)\times_{k=1}^d(\matu_k\matQ_k)+\sum_{k=1}^d(\tensG\times_{i=1}^d\matQ_i^\T)\times_k(\dot{\matu}_k\matQ_k)\times_{j\neq k}(\matu_j\matQ_j)\\
        &=(\dot{\tensG}\times_{k=1}^d\matQ_k^\T)\times_{k=1}^d\breve{\matu}_k+\sum_{k=1}^d\breve{\tensG}\times_k(\dot{\matu}_k\matQ_k)\times_{j\neq k}\breve{\matu}_j.
    \end{aligned}
\end{equation*}
Since $\dot{\tensG}\times_{k=1}^d\matQ_k^\T\in\mathbb{R}^{r_1\times r_2\times\cdots\times r_d}$, $\dot{\matu}_k\matQ_k\in\mathbb{R}^{n_k\times r_k}$, and 
\[(\dot{\matu}_k\matQ_k)^\T\breve{\matu}_k^{}=\matQ_k^\T\dot{\matu}_k^\T\matu_k^{}\matQ_k^{}=0,\]
we conclude that $\tensV\in\breve{\tangent}_\tensX\!\tensM_\vecr$ as in~\eqref{eq: old tangent space}. Therefore, $\tangent_\tensX\!\tensM_\vecr\subseteq \breve{\tangent}_\tensX\!\tensM_\vecr$ and the converse is also true. 
}

\paragraph{Metric projections}
Given a tensor $\tensA\in\mathbb{R}^{n_1\times n_2\times\cdots\times n_d}$, the metric projection of $\tensA$ onto the Tucker tensor varieties $\tensM_{\leq\vecr}=\{\tensX\in\mathbb{R}^{n_1\times n_2\times\cdots\times n_d}:\ranktc(\tensX)\leq\vecr\}$ is defined by
\begin{equation}
    \label{eq: metric projection onto M}
    \proj_{\leq\vecr}(\tensA):=\argmin_{\tensX\in\tensM_{\leq\vecr}}\|\tensA-\tensX\|_\frob^2.
\end{equation}
In contrast with the matrix case in section~\ref{subsec: low-rank matrix}, $\proj_{\leq\vecr}(\tensA)$ does not have a closed-form expression in general~\cite{de2000multilinear}. Nevertheless, one can apply \emph{higher-order singular value decomposition} (HOSVD) to yield a quasi-optimal solution. Specifically, the HOSVD procedure sequentially applies the best rank-$r_k$ approximation operator $\proj^k_{\leq r_k}$ to each mode of $\tensA$ for $k=1,2,\dots,d$, i.e,
\begin{equation}
    \label{eq: HOSVD}
    \proj_{\leq\vecr}^\mathrm{HO}(\tensA):=\proj_{\leq r_d}^d(\proj_{\leq r_{d-1}}^{d-1}\cdots(\proj_{\leq r_1}^1(\tensA))),
\end{equation}
\revise{where $\proj_{\leq r_k}^k(\tensA):=\ten_{(k)}(\bar{\matu}_k^{}\bar{\matu}_k^\T\mata_{(k)})$, $\bar{\matu}_k$ is the leading $r_k$ singular vectors of~$\mata_{(k)}$, and $\proj_{\leq\vecr}^\mathrm{HO}$ does not depend on the order of $\{\proj_{\leq r_k}^k\}_{k=1}^d$~\cite[\S 3]{vannieuwenhoven2012new}.} Since the quasi-optimality
\begin{equation}
    \label{eq: quasi-optimal HOSVD}
    \|\tensA-\proj_{\leq\vecr}^\mathrm{HO}(\tensA)\|_\frob\leq\sqrt{d}\|\tensA-\proj_{\leq\vecr}(\tensA)\|_\frob
\end{equation}
\revise{holds~\cite[Lemma 2.6]{grasedyck2010hierarchical},} HOSVD can be served as an approximate projection onto $\tensM_{\leq\vecr}$. Moreover, we can prove that HOSVD is also a retraction on $\tensM_{\leq\vecr}$ around $\tensX$. 
\begin{proposition}\label{prop: retraction}
    The mapping $\retr_\tensX^\mathrm{HO}:\tangent_{\tensX}\!\tensM_{\leq\vecr}\to\tensM_{\leq\vecr}:\tensV\mapsto\proj_{\leq\vecr}^\mathrm{HO}(\tensX+\tensV)$ is a retraction on $\tensM_{\leq\vecr}$.
\end{proposition}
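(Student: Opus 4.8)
The plan is to verify directly the two defining properties of a retraction from Definition~2.4 quoted above: the first-order rigidity condition $\lim_{t\to 0^+}(\retr_\tensX^\mathrm{HO}(t\tensV)-\tensX-t\tensV)/t=0$ and the continuity of $t\mapsto\retr_\tensX^\mathrm{HO}(t\tensV)$ on $t\geq 0$. First I would record the elementary fact that $\retr_\tensX^\mathrm{HO}(0)=\proj_{\leq\vecr}^\mathrm{HO}(\tensX)=\tensX$, since applying a mode-$k$ truncation $\proj_{\leq r_k}^k$ to a tensor whose mode-$k$ rank is already at most $r_k$ leaves it unchanged. Writing $R(t):=\proj_{\leq\vecr}^\mathrm{HO}(\tensX+t\tensV)$, the rigidity condition is equivalent to $\|R(t)-(\tensX+t\tensV)\|_\frob=o(t)$ as $t\to 0^+$. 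The key reduction is the quasi-optimality bound~\eqref{eq: quasi-optimal HOSVD}, which gives
\[\|R(t)-(\tensX+t\tensV)\|_\frob\leq\sqrt{d}\,\|\proj_{\leq\vecr}(\tensX+t\tensV)-(\tensX+t\tensV)\|_\frob=\sqrt{d}\,\dist(\tensX+t\tensV,\tensM_{\leq\vecr}).\]
Hence it suffices to prove the purely geometric estimate $\dist(\tensX+t\tensV,\tensM_{\leq\vecr})=o(t)$, which relieves the proof of any direct analysis of the intricate HOSVD map for the rigidity part.

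This estimate is the heart of the argument and carries its main obstacle: the Bouligand definition of $\tangent_\tensX\!\tensM_{\leq\vecr}$ only furnishes \emph{some} sequence $t^{(i)}\to 0^+$ along which $\dist(\tensX+t^{(i)}\tensV,\tensM_{\leq\vecr})=o(t^{(i)})$, whereas rigidity demands the limit along \emph{every} $t\to 0^+$. I would therefore not argue from the abstract cone but instead construct an explicit feasible curve. Using the explicit parametrization of the Tucker tangent cone (Theorem~\ref{thm: tangent cone of Tucker}), I would split $\tensV=\tensV_{\mathrm{T}}+\tensV_{\mathrm{N}}$ into a component $\tensV_{\mathrm{T}}\in\tangent_\tensX\!\tensM_{\underline{\vecr}}$ tangent to the fixed-rank manifold, where $\underline{\vecr}=\ranktc(\tensX)$, and a complementary component $\tensV_{\mathrm{N}}$ whose mode-$k$ unfolding has rank at most $r_k-\underline{r}_k$ for each $k$. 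This mirrors the matrix splitting $\tangent_\matx\!\mathbb{R}^{m\times n}_{\leq r}=\tangent_\matx\!\mathbb{R}^{m\times n}_{\underline{r}}+\Span(\matu_1)\otimes\Span(\matv_1)$ highlighted earlier.

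Building on this splitting, I would realize $\tensV_{\mathrm{T}}$ by a smooth curve $\eta(t)\in\tensM_{\underline{\vecr}}$ with $\eta(0)=\tensX$ and $\dot\eta(0)=\tensV_{\mathrm{T}}$ (for instance through any retraction on the smooth manifold $\tensM_{\underline{\vecr}}$), and then set $\gamma(t):=\eta(t)+t\tensV_{\mathrm{N}}$. Feasibility is then immediate from subadditivity of the matrix rank applied mode by mode: since $\operatorname{rank}(\eta(t)_{(k)})=\underline{r}_k$ and $\operatorname{rank}((\tensV_{\mathrm{N}})_{(k)})\leq r_k-\underline{r}_k$, the mode-$k$ unfolding of $\gamma(t)$ has rank at most $r_k$, so $\gamma(t)\in\tensM_{\leq\vecr}$ for all $t\geq 0$; this is exactly the rank-increase mechanism of Remark~\ref{rem: 1} and section~\ref{subsec: rank increase}. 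Because $\gamma(0)=\tensX$ and $\gamma(t)=\tensX+t\tensV_{\mathrm{T}}+o(t)+t\tensV_{\mathrm{N}}=\tensX+t\tensV+o(t)$, feasibility of $\gamma$ yields $\dist(\tensX+t\tensV,\tensM_{\leq\vecr})\leq\|\tensX+t\tensV-\gamma(t)\|_\frob=o(t)$, which closes the rigidity property through the quasi-optimality reduction of the first paragraph.

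It remains to address continuity. I would argue that for $t\geq 0$ near the origin each of the $d$ mode-wise truncations defining $\proj_{\leq\vecr}^\mathrm{HO}$ acts at a matrix possessing a singular-value gap at index $r_k$: the leading $\underline{r}_k$ singular values of the (successively truncated) mode-$k$ unfolding stay near the positive singular values of $\matx_{(k)}$, while the remaining singular values grow at most linearly in $t$, so the retained and discarded blocks separate and the best rank-$r_k$ approximation is single-valued and continuous there. Continuity of $R(t)$ then follows by composing the $d$ continuous maps with the affine map $t\mapsto\tensX+t\tensV$. The delicate technical point I would treat most carefully is precisely this gap analysis, since the truncated SVD is discontinuous exactly where the $r_k$-th and $(r_k+1)$-th singular values coincide while being positive; the borderline case, in which $\tensV$ does not saturate the rank budget of mode $k$, is handled by noting that the discarded singular values are then $o(t)$, so the map reduces to a lower-order truncation that remains continuous. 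Combining the rigidity estimate with continuity establishes that $\retr_\tensX^\mathrm{HO}$ is a retraction on $\tensM_{\leq\vecr}$.
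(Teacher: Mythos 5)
Your reduction of the rigidity condition to the estimate $\dist(\tensX+t\tensV,\tensM_{\leq\vecr})=o(t)$ via the quasi-optimality bound~\eqref{eq: quasi-optimal HOSVD} is exactly the paper's route, and your diagnosis that the Bouligand definition only gives this along \emph{some} sequence is correct. The gap is in your feasible curve: the splitting $\tensV=\tensV_{\mathrm{T}}+\tensV_{\mathrm{N}}$ with $\tensV_{\mathrm{T}}\in\tangent_\tensX\!\tensM_{\underline{\vecr}}$ and $\ranktc(\tensV_{\mathrm{N}})\leq\vecr-\underline{\vecr}$ does \emph{not} exist for general elements of the Tucker tangent cone; this is precisely where the tensor case departs from the matrix splitting you invoke (the paper itself notes in section~\ref{subsec: TRAM} that $\tangent_{\tensX}\!\tensM_{\underline{\vecr}}+\normal_{\leq\vecl}(\tensX)\subsetneq\tangent_\tensX\!\tensM_{\leq\vecr}$). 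Concretely, take $d=3$, $\underline{\vecr}=(1,1,1)$, $\vecr=(2,2,2)$, $\tensX=\sigma\,\vecu_1\circ\vecu_2\circ\vecu_3$, and
\[\tensV=\vecu_1\circ\vecv_2\circ\vecv_3+\vecv_1\circ\vecu_2\circ\vecv_3+\vecv_1\circ\vecv_2\circ\vecu_3,\qquad \vecv_k\perp\vecu_k,\ \|\vecv_k\|=1,\]
which lies in $\tangent_\tensX\!\tensM_{\leq\vecr}$ by Theorem~\ref{thm: tangent cone of Tucker} (take $\matu_{k,1}=\vecv_k$, $\matR_{k,2}=0$, and $\tensC$ with ones in positions $(1,2,2),(2,1,2),(2,2,1)$). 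Any $\tensV_{\mathrm{T}}\in\tangent_\tensX\!\tensM_{(1,1,1)}$ has the form $\dot g\,\vecu_1\circ\vecu_2\circ\vecu_3+\sigma(\dot\vecu_1\circ\vecu_2\circ\vecu_3+\vecu_1\circ\dot\vecu_2\circ\vecu_3+\vecu_1\circ\vecu_2\circ\dot\vecu_3)$ with $\dot\vecu_k\perp\vecu_k$. Contracting $\tensV-\tensV_{\mathrm{T}}$ along modes $2,3$ with $\vecv_2\circ\vecv_3$ gives $\vecu_1$, while contracting with $\vecu_2\circ\vecv_3$ gives $\vecv_1-\sigma\langle\dot\vecu_3,\vecv_3\rangle\vecu_1$; hence the mode-$1$ unfolding of $\tensV-\tensV_{\mathrm{T}}$ has rank at least $2>r_1-\underline{r}_1=1$ for \emph{every} choice of $\tensV_{\mathrm{T}}$. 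So no admissible $\tensV_{\mathrm{N}}$ exists, the curve $\gamma(t)=\eta(t)+t\tensV_{\mathrm{N}}$ cannot be formed, and your rank-subadditivity feasibility argument never gets started for such $\tensV$. The culprit is exactly the ``cross'' blocks of $\tensC$ that involve $\matu_{k,1}$ in two or more modes: these are genuinely tensorial and are neither tangent to $\tensM_{\underline{\vecr}}$ nor of small multilinear rank after subtracting any tangent vector.

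The paper closes this step with a different tool: since $\tensM_{\leq\vecr}$ is a real-algebraic variety, \cite[Proposition 2]{o2004limits} provides an \emph{analytic arc} $\gamma:[0,\epsilon)\to\tensM_{\leq\vecr}$ with $\dot\gamma(0)=\tensV$, whence $\dist(\tensX+t\tensV,\tensM_{\leq\vecr})\leq\|\tensX+t\tensV-\gamma(t)\|_\frob=o(t)$, and the quasi-optimality factor $\sqrt{d}$ finishes rigidity. If you want an explicit curve instead of citing this result, the construction already used in the proof of Theorem~\ref{thm: tangent cone of Tucker} works for every cone element: $\gamma(t)=t\,\tensC\times_{k=1}^d\begin{bmatrix}\matu_k+t\matu_{k,2}\matR_{k,2} & \matu_{k,1}\end{bmatrix}+\tensG\times_{k=1}^d(\matu_k+t\matu_{k,2}\matR_{k,2})$ is feasible for all $t\geq 0$ and equals $\tensX+t\tensV+O(t^2)$; the cross terms that defeat your splitting are absorbed by letting the factor matrices themselves move and by scaling the core with $t$. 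Finally, your continuity treatment also differs from the paper's (which disposes of it in one sentence via nonexpansiveness of $\proj_{\leq\vecr}^{\mathrm{HO}}$), and as written your singular-value-gap analysis would still need care at parameters where retained and discarded singular values of the intermediate truncations coincide; but this is secondary to the missing decomposition above.
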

\begin{proof}
    See Appendix~\ref{app: retraction}. \qed
\end{proof}

In addition, the metric projection of a tensor $\tensA\in\mathbb{R}^{n_1\times n_2\times\cdots\times n_d}$ onto $\tangent_{\tensX}\!\tensM_\vecr$ is given by~\cite[\S 2.3]{koch2010dynamical}
\begin{equation}\label{eq: orth proj onto tangent space Tucker}
    \proj_{\tangent_{\tensX}\!\tensM_\vecr}\!\tensA=\tensA\times_{k=1}^d\proj_{\matu_k}+\sum_{k=1}^d \tensG\times_k\left(\proj_{\matu_k}^\perp\!\left(\tensA\times_{j\neq k}\matu_j^\T\right)_{(k)}\matG_{(k)}^\dagger\right)\times_{j\neq k}\matu_j,
\end{equation}
where $\matG_{(k)}^\dagger=\matG_{(k)}^\T(\matG_{(k)}^{}\matG_{(k)}^\T)^{-1}$ is the Moore--Penrose pseudoinverse of $\matG_{(k)}$, the mode-$k$ unfolding matrix of $\tensG$.

\section{Geometry of Tucker tensor varieties}\label{sec: geom of Tucker tensor varieties}
We first revisit the tangent space of the fixed-rank Tucker manifold from a new geometric perspective. Then, an explicit \revise{parametrization} of the tangent cone of Tucker varieties is developed. In the end, we propose an approximate projection onto the tangent cone.
\subsection{A new formulation of the tangent space of fixed-rank Tucker manifold}
In this subsection, we give an equivalent formulation for the tangent space to fixed-rank Tucker manifold $\tensM_{\underline{\vecr}}$. Given a tensor $\tensX\in\mathbb{R}^{n_1\times n_2\times \cdots\times n_d}$ with $\ranktc(\tensX)=\underline{\vecr}$ and Tucker decomposition $\tensX=\tensG\times_{k=1}^d\matu_k$, it follows from~\eqref{eq: old tangent space} that $\Span(\dot{\matu}_k)\subseteq\Span(\matu_k)^\perp$, i.e., the set $\{\dot{\matu}_k\in\mathbb{R}^{n_k\times \underline{r}_k}: \dot{\matu}_k^\T\matu_k=0\}$ can be presented as $\{\dot{\matu}_k=\matu_k^\perp\dot{\matR}_k^{}: \dot{\matR}_k^{}\in\mathbb{R}^{(n_k-\underline{r}_k)\times \underline{r}_k}\}$, where $\matu_k^\perp$ is defined in a same fashion as Proposition~\ref{prop: geom of matrix varieties} for $k\in[d]$. Subsequently, for any tangent vector $\tensV\in\tangent_\tensX\!\tensM_{\underline{\vecr}}$, we have
\begin{equation*}
    \begin{aligned}
        \tensV&=\dot{\tensG}\times_1\matu_1\cdots\times_d\matu_d+\sum_{k=1}^d\tensG\times_k\dot{\matu}_k\times_{j\neq k}\matu_j\\
        &=\dot{\tensG}\times_1\matu_1\cdots\times_d\matu_d+\sum_{k=1}^d\tensG\times_k(\matu_k^\perp\dot{\matR}_k^{})\times_{j\neq k}\matu_j\\
        &=\dot{\tensG}\times_1\matu_1\cdots\times_d\matu_d+\sum_{k=1}^d(\tensG\times_k\dot{\matR}_k^{})\times_k\matu_k^\perp\times_{j\neq k}\matu_j.
    \end{aligned}
\end{equation*}
\revise{Therefore, the tangent space $\tangent_\tensX\!\tensM_{\underline{\vecr}}$ can also be parametrized by}
\begin{equation}
    \label{eq: new tangent space}
        \tangent_\tensX\!\tensM_{\underline{\vecr}}=\left\{\begin{array}{l}
             \dot{\tensG}\times_1\matu_1\cdots\times_d\matu_d+\sum_{k=1}^d(\tensG\times_k\dot{\matR}_k)\times_k\matu_k^\perp\times_{j\neq k}\matu_j:\vspace{4pt}\\
             \dot{\tensG}\in\mathbb{R}^{\underline{r}_1\times\underline{r}_2\times\cdots\times \underline{r}_d},\dot{\matR}_k\in\mathbb{R}^{(n_k-\underline{r}_k)\times \underline{r}_k}
        \end{array}
        \right\}.
\end{equation}
Specifically, a tangent vector in $\tangent_\tensX\!\tensM_{\underline{\vecr}}$ for $d=3$ is illustrated in Fig.~\ref{fig: core tensor in tangent space}, where a shaded cube represents an arbitrary tensor $\dot{\tensG}\in\mathbb{R}^{\underline{r}_1\times\underline{r}_2\times\cdots\times \underline{r}_d}$ and the blank represents the tensor with zero elements. For the sake of brevity, we adopt these symbols to represent a tensor. 

\begin{figure}[htbp]
    \centering
    \includegraphics[scale=0.833]{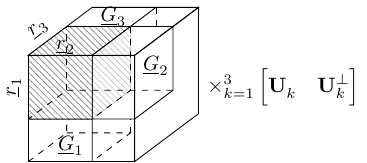}
    \caption{Illustration of a tangent vector in $\tangent_\tensX\!\tensM_{{\underline{\vecr}}}$ at $\tensX=\tensG\times_{k=1}^d\matu_k$ for $d=3$. $\underline{G}_k:=\tensG\times_k\dot{\matR}_k$ with arbitrary $\dot{\matR}_k\in\mathbb{R}^{(n_k-\underline{r}_k)\times \underline{r}_k}$}
    \label{fig: core tensor in tangent space}
\end{figure}

\subsection{Tangent cone of Tucker tensor varieties}
In this subsection, we study the tangent cone of $\tensM_{\leq\vecr}$. Since $\tensM_{\leq\vecr}$ can be constructed through $d$ matrix varieties $\mathbb{R}^{n_k\times n_{-k}}_{\leq r_k}$ of unfolding matrices, i.e., 
\[\tensM_{\leq\vecr}=\bigcap_{k=1}^d\ten_{(k)}\!\left(\mathbb{R}^{n_k\times n_{-k}}_{\leq r_k}\right),\] 
\revise{it is straightforward to have the following lemma.} 

\begin{lemma}\label{lem: tensor tangent cone to matrix tangent cone}
    Given a tensor $\tensX\in\tensM_{\leq\vecr}$, the tangent cone of $\tensM_{\leq\vecr}$ is a subset of the intersection of tensorized tangent cones of unfolding matrices along different modes, i.e., 
    \[\tangent_\tensX\!\tensM_{\leq\vecr}\subseteq\bigcap_{k=1}^d\ten_{(k)}\!\left(\tangent_{\matx_{(k)}}\!\mathbb{R}^{n_k\times n_{-k}}_{\leq r_k}\right).\]
\end{lemma}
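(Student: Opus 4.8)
The plan is to deduce the inclusion from the decomposition $\tensM_{\leq\vecr}=\bigcap_{k=1}^d\ten_{(k)}(\mathbb{R}^{n_k\times n_{-k}}_{\leq r_k})$ recalled above, together with two elementary properties of the Bouligand tangent cone. The first is that the tangent cone of an intersection is contained in the intersection of the tangent cones: if $\tensX$ lies in sets $C_1,\dots,C_d$, then $\tangent_\tensX(\bigcap_{k}C_k)\subseteq\bigcap_{k}\tangent_\tensX C_k$. This follows straight from the definition of $\tangent_\tensX$, since a single approximating sequence $\tensX^{(i)}\to\tensX$ lying in $\bigcap_{k}C_k$ serves simultaneously as an admissible sequence for every individual $C_k$, and the difference quotients $(\tensX^{(i)}-\tensX)/t^{(i)}$ do not depend on $k$. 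Applying this with $C_k=\ten_{(k)}(\mathbb{R}^{n_k\times n_{-k}}_{\leq r_k})$ immediately yields $\tangent_\tensX\!\tensM_{\leq\vecr}\subseteq\bigcap_{k=1}^d\tangent_\tensX\!\ten_{(k)}(\mathbb{R}^{n_k\times n_{-k}}_{\leq r_k})$.

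It then remains to identify each $\tangent_\tensX\!\ten_{(k)}(\mathbb{R}^{n_k\times n_{-k}}_{\leq r_k})$ with $\ten_{(k)}(\tangent_{\matx_{(k)}}\!\mathbb{R}^{n_k\times n_{-k}}_{\leq r_k})$. The second property is the covariance of the tangent cone under a linear isomorphism $L$: for $x$ in a set $S$ one has $\tangent_{L(x)}(L(S))=L(\tangent_x S)$. Here I would take $L=\ten_{(k)}$, which is linear and invertible with inverse the mode-$k$ unfolding $\tensX\mapsto\matx_{(k)}$, both maps being continuous as linear maps between finite-dimensional spaces. To verify covariance in the direction needed, take $\tensV\in\tangent_\tensX\!\ten_{(k)}(\mathbb{R}^{n_k\times n_{-k}}_{\leq r_k})$ with witnessing sequences $t^{(i)}\to0$ and $\tensX^{(i)}\to\tensX$ in $\ten_{(k)}(\mathbb{R}^{n_k\times n_{-k}}_{\leq r_k})$ such that $(\tensX^{(i)}-\tensX)/t^{(i)}\to\tensV$; unfolding yields $\matx_{(k)}^{(i)}\to\matx_{(k)}$ in $\mathbb{R}^{n_k\times n_{-k}}_{\leq r_k}$ with $(\matx_{(k)}^{(i)}-\matx_{(k)})/t^{(i)}\to\matv_{(k)}$, whence $\matv_{(k)}\in\tangent_{\matx_{(k)}}\!\mathbb{R}^{n_k\times n_{-k}}_{\leq r_k}$ and therefore $\tensV=\ten_{(k)}(\matv_{(k)})\in\ten_{(k)}(\tangent_{\matx_{(k)}}\!\mathbb{R}^{n_k\times n_{-k}}_{\leq r_k})$. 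Combining this identification over all $k$ with the inclusion of the first paragraph closes the argument.

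The argument is short, and the only point requiring care is the covariance step: one must check that unfolding and its inverse preserve both membership in the relevant set and convergence of the difference quotients. I do not expect a genuine obstacle here, since $\ten_{(k)}$ merely rearranges entries and is thus a bijective isometry for the Frobenius inner product $\langle\cdot,\cdot\rangle$, so it transports approximating sequences and their limits transparently. I would emphasize, however, that the lemma asserts only the inclusion $\subseteq$; the reverse inclusion, that is, the exact parametrization of $\tangent_\tensX\!\tensM_{\leq\vecr}$, is the substantially harder result established later in Theorem~\ref{thm: tangent cone of Tucker}, where the intersection on the right-hand side need not coincide with the tangent cone.
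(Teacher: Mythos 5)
Your proof is correct and follows exactly the route the paper intends: the paper states the decomposition $\tensM_{\leq\vecr}=\bigcap_{k=1}^d\ten_{(k)}\bigl(\mathbb{R}^{n_k\times n_{-k}}_{\leq r_k}\bigr)$ and then asserts the lemma without proof as immediate, and your argument simply makes explicit the two standard facts this relies on (tangent cone of an intersection lies in the intersection of tangent cones, and the tangent cone commutes with the linear isometry $\ten_{(k)}$). Your closing remark is also accurate: the paper proves the reverse inclusion later, so that Corollary~\ref{coro: 1} upgrades this containment to an equality.
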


Then, we give an explicit parametrization of the tangent cone of Tucker tensor varieties as follows.
\begin{theorem}\label{thm: tangent cone of Tucker}
    Given a Tucker tensor $\tensX=\tensG\times_1\matu_1\cdots\times_d\matu_d\in\mathbb{R}^{n_1\times n_2\times\cdots\times n_d}$ with $\ranktc(\tensX)=\underline{\vecr}\leq\vecr$, any $\tensV$ in the tangent cone of $\tensM_{\leq\vecr}$ at $\tensX$ can be expressed by 
    \begin{equation}
        \label{eq: Tucker tangent cone}
        \begin{aligned}
            \tensV=\tensC\times_{k=1}^d\begin{bmatrix}
                \matu_k & \matu_{k,1}
            \end{bmatrix}+\sum_{k=1}^d\tensG\times_k(\matu_{k,2}\matR_{k,2})\times_{j\neq k}\matu_j,
        \end{aligned}
    \end{equation}
    where $\tensC\in\mathbb{R}^{r_1\times r_2\times\cdots\times r_d}$, $\matR_{k,2}\in\mathbb{R}^{(n_k-r_k)\times \underline{r}_k}$, $\matu_{k,1}\in\St(r_k-\underline{r}_k,n_k)$ and $\matu_{k,2}\in\St(n_k-r_k,n_k)$ are arbitrary that satisfy $[\matu_k\ \matu_{k,1}\ \matu_{k,2}]\in\mathcal{O}(n_k)$ for $k\in[d]$. 
    
\end{theorem}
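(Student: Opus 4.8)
The plan is to prove the stated inclusion $\tangent_\tensX\!\tensM_{\leq\vecr}\subseteq\{\text{tensors of the form~\eqref{eq: Tucker tangent cone}}\}$ by feeding the mode-wise matrix information of Lemma~\ref{lem: tensor tangent cone to matrix tangent cone} into the reduced tangent-cone representation~\eqref{eq: new representation of matrix tangent cone}, and then reassembling the $d$ resulting constraints into a single multilinear decomposition of $\tensV$. Throughout I write $\proj_{\matu_k}=\matu_k\matu_k^\T$, with $\proj_{\matu_{k,1}}$, $\proj_{\matu_{k,2}}$ defined analogously, so that $\proj_{\matu_k}+\proj_{\matu_{k,1}}+\proj_{\matu_{k,2}}=\matI_{n_k}$ for any orthonormal completion $[\matu_k\ \matu_{k,1}\ \matu_{k,2}]\in\mathcal{O}(n_k)$. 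Two facts fix the geometry: since $\ranktc(\tensX)=\underline{\vecr}$, the unfolding $\matx_{(k)}=\matu_k\matG_{(k)}((\matu_j)^{\otimes j\neq k})^\T$ has rank $\underline{r}_k$, hence $\matG_{(k)}$ has full row rank $\underline{r}_k$; and its row space, which I call $\Span(\matv_k)$, satisfies $\Span(\matv_k)\subseteq\bigotimes_{j\neq k}\Span(\matu_j)$ because $(\matu_j)^{\otimes j\neq k}$ has orthonormal columns.

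First I would fix $\tensV\in\tangent_\tensX\!\tensM_{\leq\vecr}$ and record the mode-$k$ constraints. By Lemma~\ref{lem: tensor tangent cone to matrix tangent cone}, each unfolding $\matv_{(k)}$ lies in $\tangent_{\matx_{(k)}}\!\mathbb{R}^{n_k\times n_{-k}}_{\leq r_k}$, so by Proposition~\ref{prop: geom of matrix varieties} the ``far'' block $\matf_k:=\proj_{\matu_k}^\perp\matv_{(k)}\proj_{\matv_k}^\perp$ has rank at most $r_k-\underline{r}_k$, and $\Span(\matf_k)\subseteq\Span(\matu_k)^\perp$. I would then choose the completion \emph{adaptively}: pick $\matu_{k,1}\in\St(r_k-\underline{r}_k,n_k)$ with $\Span(\matu_{k,1})\subseteq\Span(\matu_k)^\perp$ and $\Span(\matu_{k,1})\supseteq\Span(\matf_k)$ (possible since $\dim\Span(\matf_k)\leq r_k-\underline{r}_k$), and let $\matu_{k,2}$ complete the orthonormal basis. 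With this choice $\matu_{k,2}^\T\matf_k=0$, so $\matu_{k,2}^\T\matv_{(k)}=\matu_{k,2}^\T\proj_{\matu_k}^\perp\matv_{(k)}\proj_{\matv_k}$ has all its rows in $\Span(\matv_k)\subseteq\bigotimes_{j\neq k}\Span(\matu_j)$. In tensor form this is the coupling identity $\tensV\times_k\proj_{\matu_{k,2}}=\tensV\times_k\proj_{\matu_{k,2}}\times_{j\neq k}\proj_{\matu_j}$: projecting mode $k$ onto $\Span(\matu_{k,2})$ forces every other mode into $\Span(\matu_j)$. This identity is the crux.

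Next I would expand $\tensV=\tensV\times_{k=1}^d(\proj_{\matu_k}+\proj_{\matu_{k,1}}+\proj_{\matu_{k,2}})$ into its $3^d$ multilinear blocks and annihilate most of them. Any block in which some mode $k$ carries $\proj_{\matu_{k,2}}$ while another mode $j$ carries $\proj_{\matu_{j,1}}$ or $\proj_{\matu_{j,2}}$ must vanish, since the coupling identity puts mode $j$ in $\Span(\matu_j)$; likewise two simultaneous factors $\proj_{\matu_{k,2}}$ are impossible. Hence only two families survive: the blocks indexed by $\{\matu_k,\matu_{k,1}\}^d$, whose sum is $\tensV\times_{k=1}^d\proj_{[\matu_k\ \matu_{k,1}]}$, and the $d$ blocks carrying a single $\proj_{\matu_{k,2}}$ with every other mode $\proj_{\matu_j}$. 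The first family lies in $\bigotimes_k\Span([\matu_k\ \matu_{k,1}])$, so by~\eqref{eq: tensor product} it equals $\tensC\times_{k=1}^d[\matu_k\ \matu_{k,1}]$ for a unique $\tensC\in\mathbb{R}^{r_1\times\cdots\times r_d}$. For the $k$-th term of the second family, the coupling identity already gives $\matu_{k,2}^\T\matv_{(k)}=\matR_{k,2}\matG_{(k)}((\matu_j)^{\otimes j\neq k})^\T$ for some $\matR_{k,2}\in\mathbb{R}^{(n_k-r_k)\times\underline{r}_k}$ (using that $\matG_{(k)}((\matu_j)^{\otimes j\neq k})^\T$ has full row rank $\underline{r}_k$), and unfolding back along mode $k$ shows this block equals $\tensG\times_k(\matu_{k,2}\matR_{k,2})\times_{j\neq k}\matu_j$. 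Summing the two families reproduces exactly~\eqref{eq: Tucker tangent cone}.

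I expect the main obstacle to be the coupling identity of the second paragraph: reconciling the $d$ independent matrix tangent-cone constraints into a statement that is simultaneously consistent across all modes, and in particular verifying that the ``far normal'' direction $\matu_{k,2}$ in one mode is forced to pair only with the original factor spans $\Span(\matu_j)$ in every other mode. This rests on the precise sparsity of~\eqref{eq: new representation of matrix tangent cone} (the $\matu_{k,2}$-row couples only with the row space $\Span(\matv_k)$) together with $\Span(\matv_k)\subseteq\bigotimes_{j\neq k}\Span(\matu_j)$, both of which depend essentially on $\matG_{(k)}$ having full row rank. The reverse inclusion, upgrading the result to a genuine parametrization, is comparatively routine: one builds the curve $\tensX(t)=(\hat{\tensG}+t\tensC)\times_{k=1}^d\matW_k(t)$ in $\tensM_{\leq\vecr}$, where $\hat{\tensG}$ embeds $\tensG$ into $\mathbb{R}^{r_1\times\cdots\times r_d}$ by zero-padding and $\matW_k(t)=\qf([\matu_k+t\matu_{k,2}\matR_{k,2}\ \ \matu_{k,1}])$, and verifies that $\tensX'(0)$ equals~\eqref{eq: Tucker tangent cone}.
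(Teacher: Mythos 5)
Your proof is correct and follows essentially the same route as the paper's: both obtain the hard inclusion from Lemma~\ref{lem: tensor tangent cone to matrix tangent cone} plus the reduced matrix representation~\eqref{eq: new representation of matrix tangent cone}, choose $\matu_{k,1}$ adapted to the far block of each unfolding, recover $\matR_{k,2}$ from the full row rank of $\matG_{(k)}$ (your ``coupling identity'' is exactly the paper's verification that $\proj_{\matu_{k,2}}$ annihilates the remainder $\tensW$), and finish with~\eqref{eq: tensor product}. Your $3^d$ resolution-of-identity expansion and the QR-orthonormalized curve for the reverse inclusion are presentational variants of the paper's subtract-and-verify computation and its explicit sequence $\tensX^{(i)}$, not a genuinely different method.
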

\begin{proof}
    For all $\tensV=\tensC\times_{k=1}^d\begin{bmatrix}
        \matu_k & \matu_{k,1}
    \end{bmatrix} +\sum_{k=1}^d\tensG\times_k(\matu_{k,2}\matR_{k,2})\times_{j\neq k}\matu_j$, $t^{(i)}=1/i$ for $i\in\mathbb{N}$, we consider a sequence
    \begin{equation*}
        \begin{aligned}
            \tensX^{(i)}&=t_i\tensC\times_{k=1}^d\begin{bmatrix}
        \matu_k+t_i\matu_{k,2}\matR_{k,2} &\ \matu_{k,1}
    \end{bmatrix}+\tensG\times_{k=1}^d(\matu_k+t_i\matu_{k,2}\matR_{k,2})\\
    &\in\bigotimes_{k=1}^d \Span\left(\begin{bmatrix}
        \matu_k+t_i\matu_{k,2}\matR_{k,2} &\ \matu_{k,1}
    \end{bmatrix}\right)\subseteq\tensM_{\leq\vecr},
        \end{aligned}
    \end{equation*}
    where we use the facts~\eqref{eq: tensor product} and $\rank([\matu_k+t_i\matu_{k,2}\matR_{k,2}\ \matu_{k,1}])\leq r_k$. By direct calculations, we yield that $\lim_{i\to\infty
    }(\tensX^{(i)}-\tensX)/t^{(i)}=\tensV$ and thus $\tensV\in\tangent_\tensX\!\tensM_{\leq\vecr}$.

    Then, we prove that any $\tensV\in\tangent_\tensX\!\tensM_{\leq\vecr}$ can be represented by~\eqref{eq: Tucker tangent cone}. Denote the mode-$k$ unfolding matrix of $\tensV$ by $\Xi_k:=\matv_{(k)}$. It follows from Lemma~\ref{lem: tensor tangent cone to matrix tangent cone} that $\Xi_k\in\tangent_{\matx_{(k)}}\!\mathbb{R}^{n_k\times n_{-k}}_{\leq r_k}$ for $k\in[d]$. Since the $k$-th unfolding matrix $\matx_{(k)}$ of $\tensX$ admits the decomposition 
    \[\matx_{(k)}=\matu_k\matG_{(k)}\matv_k^\T=\matu_k^{}\tilde{\matu}_k^{}\tilde{\Sigma}_{k}^{}\tilde{\matv}_k^\T\matv_k^\T,\] 
    where $\matG_{(k)}=\tilde{\matu}_k^{}\tilde{\Sigma}_{k}^{}\tilde{\matv}_k^\T$ is the SVD of the unfolding matrix $\matG_{(k)}$ of $\tensG$ with $\tilde{\matu}_k^{}\in\mathcal{O}(\underline{r}_k)$, and $\matv_{k}:=(\matu_j)^{\otimes j\neq k}$, it follows from $\Xi_k\in\tangent_{\matx_{(k)}}\!\mathbb{R}^{n_k\times n_{-k}}_{\leq r_k}$ and \revise{Fig.~\ref{fig: new representation of matrix tangent cone}} that there exists $\matc_k\in\mathbb{R}^{\underline{r}_k\times\underline{r}_k}$, $\matd_{k,1}\in\mathbb{R}^{(r_k-\underline{r}_k)\times\underline{r}_k}$, $\matd_{k,2}\in\mathbb{R}^{(n_k-r_k)\times\underline{r}_k}$, $\mate_{k,1}\in\mathbb{R}^{\underline{r}_k\times(r_{k}-\underline{r}_k)}$,  $\mate_{k,2}\in\mathbb{R}^{\underline{r}_k\times(n_{-k}-{r}_k)}$, $\revise{\matS_k}\in\mathbb{R}^{(r_{k}-\underline{r}_k)\times(r_{k}-\underline{r}_k)}$, $\matu_{k,1}\in\St(r_k-\underline{r}_k,n_k)$, $\matu_{k,2}\in\St(n_k-r_k,n_k)$, $\matv_{k,1}\in\St(r_k-\underline{r}_k,n_{-k})$, $\matv_{k,2}\in\St(n_{-k}-r_k,n_{-k})$ that satisfy $[\matu_k\tilde{\matu}_k \ \matu_{k,1} \ \matu_{k,2}]\in\mathcal{O}(n_k)$ and $[\matv_k\tilde{\matv}_k \ \matv_{k,1} \ \matv_{k,2}]\in\mathcal{O}(n_{-k})$, such that 
    \[\Xi_k=\begin{bmatrix}
        \matu_k\tilde{\matu}_k &\ \matu_{k,1} &\ \matu_{k,2}
    \end{bmatrix}
    \begin{bmatrix}
        \matc_k & \mate_{k,1} & \mate_{k,2}\\
        \matd_{k,1} & \revise{\matS_k} & 0\\
        \matd_{k,2} & 0 & 0 
    \end{bmatrix}
    \begin{bmatrix}
        \matv_k\tilde{\matv}_k &\ \matv_{k,1} &\ \matv_{k,2}
    \end{bmatrix}^\T.\]

    We aim to find the unknowns $\tensC$ and $\matR_{k,2}$ in~\eqref{eq: Tucker tangent cone} by leveraging the structure of $\Xi_k$. To this end, \revise{we first validate the claim} that 
    \[\tensW:=\tensV-\sum_{k=1}^d\tensG\times_k(\matu_{k,2}\matR_{k,2})\times_{j\neq k}\matu_j\in\bigotimes_{k=1}^d\Span([\matu_k\ \matu_{k,1}])\]
    with $\matR_{k,2}=\matd_{k,2}^{}\tilde{\Sigma}_k^{-1}\tilde{\matu}_k^\T$. 
    \revise{In fact, we observe that} 
    \begin{equation*}
        \begin{aligned}
            \proj_{\matu_{k,2}}\!\matW_{(k)}&=\proj_{\matu_{k,2}}\!\left(\tensV-\sum_{i=1}^d\tensG\times_i(\matu_{i,2}\matR_{i,2})\times_{j\neq i}\matu_j\right)_{(k)}\\
            &=\proj_{\matu_{k,2}}\!\Xi_k-\matu_{k,2}^{}\matR_{k,2}\matG_{(k)}^{}\matv_k^\T\\
            &=\matu_{k,2}^{}\matd_{k,2}^{}\tilde{\matv}_{k}^{\T}\matv_{k}^{\T}-\matu_{k,2}^{}\matR_{k,2}\matG_{(k)}^{}\matv_k^\T\\
            &=\matu_{k,2}^{}\matd_{k,2}^{}\tilde{\Sigma}_k^{-1}\tilde{\matu}_k^\T\tilde{\matu}_k^{}\tilde{\Sigma}_k^{}\tilde{\matv}_{k}^\T\matv_k^\T-\matu_{k,2}^{}\matR_{k,2}\matG_{(k)}^{}\matv_k^\T\\
            &=\matu_{k,2}^{}\matd_{k,2}^{}\tilde{\Sigma}_k^{-1}\tilde{\matu}_k^\T\matG_{(k)}^{}\matv_k^\T-\matu_{k,2}^{}\matR_{k,2}\matG_{(k)}^{}\matv_k^\T\\
            &=\matu_{k,2}^{}\left(\matd_{k,2}^{}\tilde{\Sigma}_k^{-1}\tilde{\matu}_k^\T-\matR_{k,2}\right)\matG_{(k)}^{}\matv_k^\T\\
            &=0
        \end{aligned}
    \end{equation*}
    holds for all $k\in[d]$. The equalities come from $\matv_{(k)}=\Xi_k$, $\proj_{\matu_{k,2}}\!\matu_k=0$, $\matv_k=(\matu_j)^{\otimes j\neq k}$, $\matG_{(k)}=\tilde{\matu}_k^{}\tilde{\Sigma}_{k}^{}\tilde{\matv}_k^\T$, and $\matR_{k,2}=\matd_{k,2}^{}\tilde{\Sigma}_k^{-1}\tilde{\matu}_k^\T$. We obtain that
    \[\matW_{(k)}\in\Span(\matu_{k,2})^\perp=\Span([\matu_k\tilde{\matu}_k\ \ \matu_{k,1}])=\Span([\matu_k\ \matu_{k,1}])\]
    and thus $\tensW\in\bigotimes_{k=1}^d\Span([\matu_k\ \matu_{k,1}])$. 
    
    Consequently, it follows from~\eqref{eq: tensor product} and the claim that there exists a tensor $\tensC\in\mathbb{R}^{r_1\times r_2\times\cdots\times r_d}$ 
    such that 
    \[\tensC\times_{k=1}^d\begin{bmatrix}
        \matu_k & \matu_{k,1}
    \end{bmatrix}=\tensW=\tensV-\sum_{k=1}^d\tensG\times_k(\matu_{k,2}\matR_{k,2})\times_{j\neq k}\matu_j.\]
    Hence, $\tensV$ can be interpreted by~\eqref{eq: Tucker tangent cone}.
    \qed
\end{proof}

Remarkably, in the proof of Theorem~\ref{thm: tangent cone of Tucker}, we employ the low-rank structure in Fig.~\ref{fig: new representation of matrix tangent cone} of unfolding matrices in the tangent cone of Tucker tensor varieties. In other words, the new reformulation in Fig.~\ref{fig: new representation of matrix tangent cone} of the tangent cone of matrix varieties is crucial to develop~\eqref{eq: Tucker tangent cone} in tensor case. We give a geometric illustration of the tangent cone $\tangent_\tensX\!\tensM_{\leq\vecr}$ for $d=3$ in Fig.~\ref{fig: core tensor in tangent cone}, where the shaded cube represents an arbitrary tensor $\tensC\in\mathbb{R}^{r_1\times\cdots\times r_d}$. Note that~\eqref{eq: Tucker tangent cone} boils down to the tangent space~\eqref{eq: new tangent space} for $\underline{\vecr}=\vecr$ in the sense of $\matu_{k,2}=\matu_k^\perp$, $\matR_{k,2}=\matR_k$, $\tensC=\dot{\tensG}$, and removing $\matu_{k,1}$. Moreover, the results are reduced to the tangent space and tangent cone of the matrix case for $d=2$. In fact, the tangent cone of matrix varieties in Fig.~\ref{fig: new representation of matrix tangent cone} can be informally interpreted by compressing the cube from back to front in Fig.~\ref{fig: core tensor in tangent cone} akin to ``playing the accordion''.

\begin{figure}[htbp]
    \centering
    \includegraphics[scale=0.833]{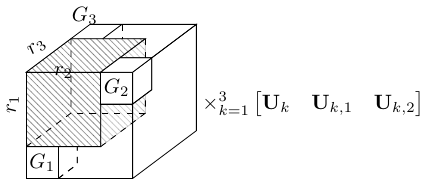}
    \caption{Illustration of an element in $\tangent_\tensX\!\tensM_{\leq\vecr}$ at $\tensX=\tensG\times_{k=1}^d\matu_k$ for $d=3$. $G_k:=\tensG\times_k\matR_{k,2}$ with \revise{parameters $\matR_{k,2}\in\mathbb{R}^{(n_k-r_k)\times\underline{r}_k}$, $\matu_{k,1}\in\St(r_k-\underline{r}_k,n_k)$ and $\matu_{k,2}\in\St(n_k-r_k,n_k)$ satisfying $[\matu_k\ \matu_{k,1}\ \matu_{k,2}]\in\mathcal{O}(n_k)$ for $k\in[d]$}}
    \label{fig: core tensor in tangent cone}
\end{figure}

\paragraph{Uniqueness of the representation}
The representation of the tangent cone vectors in Theorem~\ref{thm: tangent cone of Tucker} can be non-unique. Specifically, we observe that 
\begin{equation*}
    \begin{aligned}
        \tensV&=\breve{\tensC}\times_{k=1}^d[\matu_k \ \breve{\matu}_{k,1}]+\sum_{k=1}^d\tensG\times_k(\breve{\matu}_{k,2}\breve{\matR}_{k,2})\times_{j\neq k}\matu_j
    \end{aligned}
\end{equation*}
is a different parametrization of $\tensV\in\tangent_\tensX\!\tensM_{\leq\vecr}$ for any $\matQ_{k,1}\in{\cal O}(r_k-\underline{r}_k)$ and $\matQ_{k,2}\in{\cal O}(n_k-r_k)$, where 
\[\breve{\tensC}=\tensC\times_{k=1}^d[\matI_{\underline{r}_k} \ \matQ_{k,1}^\T],\ \breve{\matu}_{k,1}=\matu_{k,1}\matQ_{k,1},\ \breve{\matu}_{k,2}=\matu_{k,2}\matQ_{k,2},\ \breve{\matR}_{k,2}=\matQ_{k,2}^\T\matR_{k,2}^{}\]
with $k\in[d]$. Therefore, the representation~\eqref{eq: Tucker tangent cone} is non-unique. Similar to the matrix case, the representation~\eqref{eq: Tucker tangent cone} is unique in the sense of the right orthogonal group actions on $\matu_{k,1}$ and $\matu_{k,2}$ with $k\in[d]$ if 
\[\ranktc(\tensV\times_{k=1}^d\proj_{\matu_k}^\perp)=\ranktc(\tensC\times_{k=1}^d[0\ \matu_{k,1}])=\vecr-\underline{\vecr}.\]
Note that the above restriction boils down to $\rank(\proj_\matu^\perp\!\Xi\proj_\matv^\perp)=r-\underline{r}$ for matrix case~\eqref{eq: new representation of matrix 01}. Moreover, we can provide a compact parametrization of an element in $\tangent_\tensX\!\tensM_{\leq\vecr}$; see Appendix~\ref{app: compact Tucker tangent cone} for details. Though the representation~\eqref{eq: Tucker tangent cone} is not a compact form since we do not impose the rank restriction on $\tensC$, \eqref{eq: Tucker tangent cone} is able to facilitate the computation of projections in the following sections.

\revise{We} observe from the proof of Theorem~\ref{thm: tangent cone of Tucker} that the tangent cone inclusion in Lemma~\ref{lem: tensor tangent cone to matrix tangent cone} is actually an equality, which is claimed in the following corollary. Precisely, for any tensor $\tensV\in\mathbb{R}^{n_1\times n_2\times\cdots\times n_d}$ that satisfies $\matv_{(k)}\in\tangent_{\matx_{(k)}}\!\mathbb{R}^{n_k\times n_{-k}}_{\leq r_k}$, it holds that $\tensV$ can be represented by~\eqref{eq: Tucker tangent cone}, i.e., $\tensV\in\tangent_\tensX\!\tensM_{\leq\vecr}$. 

\begin{corollary}\label{coro: 1}
    The tangent cone of Tucker tensor varieties equals the intersection of tensorized tangent cones of unfolding matrices along different modes, i.e., 
    \[\tangent_\tensX\!\tensM_{\leq\vecr}=\bigcap_{k=1}^d\ten_{(k)}\!\left(\tangent_{\matx_{(k)}}\!\mathbb{R}_{\leq r_k}^{n_k\times n_{-k}}\right).\]
\end{corollary}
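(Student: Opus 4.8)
The plan is to establish the two inclusions separately, with essentially all of the work already contained in the proof of Theorem~\ref{thm: tangent cone of Tucker}. The inclusion $\tangent_\tensX\!\tensM_{\leq\vecr}\subseteq\bigcap_{k=1}^d\ten_{(k)}(\tangent_{\matx_{(k)}}\!\mathbb{R}^{n_k\times n_{-k}}_{\leq r_k})$ is exactly the statement of Lemma~\ref{lem: tensor tangent cone to matrix tangent cone}, so I would simply cite it and move on. Everything interesting lies in the reverse inclusion.

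For the reverse inclusion I would take an arbitrary $\tensV$ in the intersection, that is, a tensor whose every mode-$k$ unfolding satisfies $\Xi_k:=\matv_{(k)}\in\tangent_{\matx_{(k)}}\!\mathbb{R}^{n_k\times n_{-k}}_{\leq r_k}$ for $k\in[d]$, and show $\tensV\in\tangent_\tensX\!\tensM_{\leq\vecr}$. The decisive observation is that the second half of the proof of Theorem~\ref{thm: tangent cone of Tucker} — the explicit construction of the core tensor $\tensC$ together with $\matR_{k,2}=\matd_{k,2}\tilde{\Sigma}_k^{-1}\tilde{\matu}_k^\T$ — never invokes the hypothesis $\tensV\in\tangent_\tensX\!\tensM_{\leq\vecr}$ directly; it uses that assumption only through its consequence (obtained via Lemma~\ref{lem: tensor tangent cone to matrix tangent cone}) that each $\Xi_k$ lies in the matrix tangent cone and hence admits the reduced representation~\eqref{eq: new representation of matrix tangent cone}. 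Under the present, weaker hypothesis this membership is given by assumption, so the very same computation — in particular the verification $\proj_{\matu_{k,2}}\matW_{(k)}=0$ — carries over verbatim and yields $\tensC$ and $\matR_{k,2}$ expressing $\tensV$ in the form~\eqref{eq: Tucker tangent cone}. Once $\tensV$ is written in that form, I would appeal to the first half of the same proof, where the explicit sequence $\tensX^{(i)}\in\tensM_{\leq\vecr}$ satisfying $(\tensX^{(i)}-\tensX)/t^{(i)}\to\tensV$ certifies that every tensor of the form~\eqref{eq: Tucker tangent cone} belongs to $\tangent_\tensX\!\tensM_{\leq\vecr}$. Chaining the two steps gives $\tensV\in\tangent_\tensX\!\tensM_{\leq\vecr}$, hence the reverse inclusion and the claimed equality.

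The main point to check — really a verification rather than a genuine obstacle — is that the derivation in Theorem~\ref{thm: tangent cone of Tucker} is truly ``mode-wise'': that the passage from $\Xi_k\in\tangent_{\matx_{(k)}}\!\mathbb{R}^{n_k\times n_{-k}}_{\leq r_k}$ to the parametrization~\eqref{eq: Tucker tangent cone} rests only on the per-mode unfolding structure and on the SVD $\matG_{(k)}=\tilde{\matu}_k\tilde{\Sigma}_k\tilde{\matv}_k^\T$ of the core, with no tensor-level constraint coupling the different modes ever being used. Confirming this decoupling is precisely what upgrades the inclusion of Lemma~\ref{lem: tensor tangent cone to matrix tangent cone} to the equality asserted in the corollary.
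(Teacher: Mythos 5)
Your proposal is correct and is essentially identical to the paper's own justification: the paper derives Corollary~\ref{coro: 1} precisely by observing that the second half of the proof of Theorem~\ref{thm: tangent cone of Tucker} uses the hypothesis $\tensV\in\tangent_\tensX\!\tensM_{\leq\vecr}$ only through the mode-wise memberships $\matv_{(k)}\in\tangent_{\matx_{(k)}}\!\mathbb{R}^{n_k\times n_{-k}}_{\leq r_k}$, so that argument, combined with the first half (which certifies every tensor of the form~\eqref{eq: Tucker tangent cone} lies in the tangent cone) and Lemma~\ref{lem: tensor tangent cone to matrix tangent cone}, yields the equality. Your added verification that the construction of $\tensC$ and $\matR_{k,2}=\matd_{k,2}\tilde{\Sigma}_k^{-1}\tilde{\matu}_k^\T$ is genuinely mode-decoupled is exactly the point the paper relies on.
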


    In general, the optimality condition (Definition~\ref{def: stationary}) of~\eqref{eq: problem (P)} is hard to verify in practice (see \S\ref{subsec: metric projection onto tangent cone}). The following proposition gives an equivalent way to verify those stationary points $\tensX^*\in\tensM_{<\vecr}:=\{\tensX\in\mathbb{R}^{n_1\times n_2\times\cdots\times n_d}:\ranktc(\tensX)<\vecr\}$ where all the modes of $\tensX^*$ are rank-deficient.  
    
\begin{proposition}\label{prop: stationary}
    Let $\tensX^*$ be a stationary point of~\eqref{eq: problem (P)} with $\underline{\vecr}^*:=\ranktc(\tensX^*)<\vecr$. Then, it holds that \[\nabla f(\tensX^*)=0.\]
\end{proposition}
\begin{proof}
    Since any tensor $\tensA\in\mathbb{R}^{n_1\times n_2\times\cdots\times n_d}$ can be represented by 
    \[\tensA=\sum_{i_1=1}^{n_1}\cdots\sum_{i_d=1}^{n_d}y_{i_1,\dots,i_d}\vece_{i_1}\circ\cdots\circ \vece_{i_d}\] 
    with rank-$1$ tensors $\vece_{i_1}\circ\cdots\circ \vece_{i_d}$ and $(i_1,\dots,i_d)\in[n_1]\times\cdots\times[n_d]$, it suffices to prove that $\langle\vece_{i_1}\circ\cdots\circ \vece_{i_d},\nabla f(\tensX^*)\rangle=0$. 

    \revise{
        It follows from $\underline{\vecr}^*<\vecr$ that $\tensX^*+t\vece_{i_1}\circ\cdots\circ \vece_{i_d}\in\tensM_{\leq\vecr}$ holds for all $t$ and $i_k\in[n_k]$ with $k\in[d]$. Therefore, it yields $\pm\vece_{i_1}\circ\cdots\circ \vece_{i_d}\in\tangent_{\tensX^*}\!\tensM_{\leq\vecr}$. 
    According to Definition~\ref{def: stationary} and the stationarity of $\tensX^*$, we have
    $\langle\vece_{i_1}\circ\cdots\circ \vece_{i_d},\nabla f(\tensX^*)\rangle=0$ and $\nabla f(\tensX^*)=0$.} \qed
\end{proof}

In contrast with the matrix case where $\mathbb{R}^{m\times n}_{\leq r}=\mathbb{R}^{m\times n}_{< r}\cup\mathbb{R}^{m\times n}_{r}$, the Tucker tensor varieties $\tensM_{\leq\vecr}$ consist of not only points in $\tensM_{<\vecr}$ and $\tensM_{\vecr}$, but also points where some but not all of the modes are rank-deficient. Note that Proposition~\ref{prop: stationary} is restricted on the stationary points in $\tensM_{<\vecr}$.

The explicit form of the tangent cone in Theorem~\ref{thm: tangent cone of Tucker} allows us to obtain several attractive results. Recall the parametrization~\eqref{eq: Tucker tangent cone} that
\begin{equation}
    \label{eq: definition of Vk}
    \tensV=\tensV_0+\sum_{k=1}^d\tensV_k:=\tensC\times_{k=1}^d\begin{bmatrix}
    \matu_k & \matu_{k,1}
\end{bmatrix}+\sum_{k=1}^d\tensG\times_k(\matu_{k,2}\matR_{k,2})\times_{j\neq k}\matu_j.
\end{equation}
Note that $\langle\tensV_i,\tensV_j\rangle=0$ for $i\neq j$ and thus $\|\tensV\|_\frob^2=\sum_{k=0}^d\|\tensV_k\|_\frob^2$. Surprisingly, searching along the two types of directions in $\tensV$ does not leave the Tucker tensor varieties: 1) it follows from~\eqref{eq: tensor product} and Tucker decomposition $\tensX=\tensG\times_{k=1}^d\matu_k$ that 
    \begin{align}
        \tensX+\tensV_0&=\tensG\times_{k=1}^d\matu_k+\tensC\times_{k=1}^d\begin{bmatrix}
            \matu_k & \matu_{k,1}
        \end{bmatrix}\nonumber\\
        &\in\bigotimes_{k=1}^d\Span([\matu_k\ \matu_{k,1}])\subseteq\tensM_{\leq\vecr};\label{eq: retraction-free directions}
    \end{align}
2) for all $k\in[d]$, we observe from $\rank(\matu_k+\matu_{k,2}\matR_{k,2})\leq \underline{r}_k$ that
    \begin{align}
        \tensX+\tensV_k&=\tensG\times_{i=1}^d\matu_i+\tensG\times_k(\matu_{k,2}\matR_{k,2})\times_{j\neq k}\matu_j\nonumber\\
        &=\tensG\times_k(\matu_k+\matu_{k,2}\matR_{k,2})\times_{j\neq k}\matu_j\nonumber\\
        &\in\tensM_{\leq\vecr}.\label{eq: retraction-free directions k}
    \end{align}
It is worth noting that~\eqref{eq: retraction-free directions} and~\eqref{eq: retraction-free directions k} provide $(d+1)$ \emph{retraction-free} search directions \revise{$\tensV_0,\tensV_1,\dots,\tensV_d$} in~$\tangent_\tensX\!\tensM_{\leq\vecr}$, which can be adopted to develop line-search methods on $\tensM_{\leq\vecr}$ without retractions; see section~\ref{sec: rfGRAP} for details. Additionally, the following corollary gives a bound for the retraction $\retr^{\mathrm{HO}}_\tensX$ and is of great importance to prove the convergence for line-search methods on $\tensM_{\leq\vecr}$.

\begin{corollary}\label{coro: bound of retraction}
    The HOSVD retraction satisfies that
    \[\|\tensX-\proj_{\leq\vecr}^{\mathrm{HO}}(\tensX+\tensV)\|_\frob\leq (1+\frac{d}{\sqrt{d+1}})\|\tensV\|_\frob\]
    for all $\tensX\in\tensM_{\leq\vecr}$ and $\tensV\in\tangent_\tensX\!\tensM_{\leq\vecr}$.
\end{corollary}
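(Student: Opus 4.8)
The plan is to combine the triangle inequality, the quasi-optimality of HOSVD, and a pigeonhole argument on the orthogonal decomposition of $\tensV$. First I would insert $\tensX+\tensV$ and split
\[\|\tensX-\proj_{\leq\vecr}^{\mathrm{HO}}(\tensX+\tensV)\|_\frob \leq \|\tensX-(\tensX+\tensV)\|_\frob + \|(\tensX+\tensV)-\proj_{\leq\vecr}^{\mathrm{HO}}(\tensX+\tensV)\|_\frob,\]
so that the first summand is exactly $\|\tensV\|_\frob$. It then remains to bound the second summand by $\frac{d}{\sqrt{d+1}}\|\tensV\|_\frob$.

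Next I would invoke the quasi-optimality bound~\eqref{eq: quasi-optimal HOSVD} to replace the HOSVD error by $\sqrt{d}$ times the metric-projection error, and then compare against an arbitrary feasible point: since $\proj_{\leq\vecr}(\tensX+\tensV)$ is by definition the minimizer over $\tensM_{\leq\vecr}$,
\[\|(\tensX+\tensV)-\proj_{\leq\vecr}^{\mathrm{HO}}(\tensX+\tensV)\|_\frob \leq \sqrt{d}\,\|(\tensX+\tensV)-\proj_{\leq\vecr}(\tensX+\tensV)\|_\frob \leq \sqrt{d}\,\|(\tensX+\tensV)-\tensY\|_\frob\]
for every $\tensY\in\tensM_{\leq\vecr}$. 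This reduces the whole estimate to producing one good feasible comparison tensor $\tensY$.

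The crux lies in this choice. Recall the orthogonal decomposition $\tensV=\tensV_0+\sum_{k=1}^d\tensV_k$ of~\eqref{eq: definition of Vk}, for which $\|\tensV\|_\frob^2=\sum_{k=0}^d\|\tensV_k\|_\frob^2$, and the retraction-free facts~\eqref{eq: retraction-free directions} and~\eqref{eq: retraction-free directions k} which guarantee $\tensX+\tensV_j\in\tensM_{\leq\vecr}$ for every $j\in\{0,1,\dots,d\}$. Taking $\tensY=\tensX+\tensV_j$ yields, by orthogonality, $\|(\tensX+\tensV)-\tensY\|_\frob^2=\|\tensV-\tensV_j\|_\frob^2=\|\tensV\|_\frob^2-\|\tensV_j\|_\frob^2$. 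I would then select the index $j$ maximizing $\|\tensV_j\|_\frob$; as the $d+1$ summands are mutually orthogonal, averaging gives $\|\tensV_j\|_\frob^2\geq\frac{1}{d+1}\|\tensV\|_\frob^2$, hence $\|\tensV-\tensV_j\|_\frob^2\leq\frac{d}{d+1}\|\tensV\|_\frob^2$. Chaining the inequalities produces $\sqrt{d}\cdot\sqrt{\tfrac{d}{d+1}}\,\|\tensV\|_\frob=\frac{d}{\sqrt{d+1}}\|\tensV\|_\frob$, and adding back the first summand $\|\tensV\|_\frob$ gives the constant $M=1+\frac{d}{\sqrt{d+1}}$.

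No step is genuinely hard: the argument is a chain of already-established facts. The only idea requiring care is recognizing that the $d+1$ retraction-free directions supply $d+1$ orthogonal feasible comparison points, so that selecting a \emph{single} dominant summand—rather than attempting to retract the full $\tensV$—keeps the comparison point feasible while losing only the $\sqrt{d/(d+1)}$ factor that averaging provides.
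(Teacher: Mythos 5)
Your proposal is correct and follows essentially the same route as the paper's proof: triangle inequality, the quasi-optimality bound~\eqref{eq: quasi-optimal HOSVD}, the $d+1$ orthogonal retraction-free feasible points $\tensX+\tensV_k$ from~\eqref{eq: retraction-free directions}--\eqref{eq: retraction-free directions k}, and a pigeonhole/averaging step. The paper phrases the last step as bounding $\min_k\sum_{j\neq k}\|\tensV_j\|_\frob^2$ by the average over $k$, which is identical to your selection of the dominant summand, so the two arguments coincide term for term and yield the same constant $M=1+\frac{d}{\sqrt{d+1}}$.
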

\begin{proof}
    \revise{It follows from~\eqref{eq: metric projection onto M} and $\tensX+\tensV_k\in\tensM_{\leq\vecr}$ by~\eqref{eq: retraction-free directions}--\eqref{eq: retraction-free directions k} that 
    \[\|\tensX+\tensV-\proj_{\leq\vecr}(\tensX+\tensV)\|^2_\frob=\min_{\tensY\in\tensM_{\leq\vecr}}\|\tensX+\tensV-\tensY\|_\frob^2\leq\|\tensX+\tensV-(\tensX+\tensV_k)\|^2_\frob\]
    for all $k=0,1,\dots,d$, and thus 
    \[(d+1)\|\tensX+\tensV-\proj_{\leq\vecr}(\tensX+\tensV)\|^2_\frob\leq\sum_{k=0}^d\|\tensX+\tensV-(\tensX+\tensV_k)\|^2_\frob.\] 
    It follows from the quasi-optimality~\eqref{eq: quasi-optimal HOSVD} and $\|\tensV\|_\frob^2=\sum_{k=0}^d\|\tensV_k\|_\frob^2$ that
    \begin{equation*}
        \begin{aligned}
            \left\|\tensX+\tensV-\proj^{\mathrm{HO}}_{\leq\vecr}(\tensX+\tensV)\right\|_\frob^2&\leq d\,\|\tensX+\tensV-\proj_{\leq\vecr}(\tensX+\tensV)\|_\frob^2\\
            &\leq \frac{d}{d+1}\sum_{k=0}^d\|\tensX+\tensV-(\tensX+\tensV_k)\|_\frob^2\\
            &=\frac{d}{d+1}\sum_{k=0}^d\sum_{j=0,j\neq k}^d\|\tensV_j\|_\frob^2\\
            &=\frac{d^2}{d+1}\|\tensV\|_\frob^2.
        \end{aligned}
    \end{equation*}}
    Therefore, it holds that
    \[\|\tensX-\proj_{\leq\vecr}^{\mathrm{HO}}(\tensX+\tensV)\|_\frob\leq\|\tensX+\tensV-\proj^{\mathrm{HO}}_{\leq\vecr}(\tensX+\tensV)\|_\frob+\|\tensV\|_\frob\leq (1+\frac{d}{\sqrt{d+1}})\|\tensV\|_\frob.\]
    \qed
\end{proof}

\subsection{Metric projection onto the tangent cone}\label{subsec: metric projection onto tangent cone}
For the sake of fulfilling the basic requirement of projected gradient methods on $\tensM_{\leq\vecr}$, we consider the metric projection of a tensor $\tensA\in\mathbb{R}^{n_1\times n_2\times\cdots\times n_d}$ onto the tangent cone $\tangent_\tensX\!\tensM_{\leq\vecr}$ at $\tensX=\tensG\times_{k=1}^d\matu_k$, which is determined by the following optimization problem
\begin{equation}\label{eq: orth projection on tangent cone}
    \proj_{\tangent_\tensX\!\tensM_{\leq\vecr}}\!\tensA:=\argmin_{\tensV\in\tangent_\tensX\!\tensM_{\leq\vecr}}\|\tensA-\tensV\|_\frob^2.
\end{equation}

Since the tangent cone $\tangent_\tensX\!\tensM_{\leq\vecr}$ is closed, the metric projection exists. Note that the metric projection can be non-unique, but we can always choose a specific one to facilitate projected gradient methods on $\tensM_{\leq\vecr}$. We explore the metric projection by taking the parametrization~\eqref{eq: Tucker tangent cone} and~\eqref{eq: definition of Vk} into~\eqref{eq: orth projection on tangent cone}, and it yields 
\begin{align}
        \left\|\tensA-\tensV\right\|_\frob^2=&\,\|\tensA-\sum_{k=0}^d\tensV_k\|_\frob^2\nonumber\\
            =&\,\|\tensA\|_\frob^2 + \left\|\tensV_0\right\|_\frob^2+\sum_{k=1}^d\left\|\tensV_k\right\|_\frob^2-2\left\langle\tensA,\tensV_0\right\rangle-2\sum_{k=1}^d\left\langle\tensA,\tensV_k\right\rangle\nonumber\\
            =&\,\|\tensA\|_\frob^2+\left\|\tensC\right\|_\frob^2-2\left\langle\tensA\times_{k=1}^d\begin{bmatrix}
                \matu_k & \matu_{k,1}
            \end{bmatrix}^\T,\tensC\right\rangle\nonumber\\
            &+\sum_{k=1}^d\left(\left\|\tensG\times_k\matR_{k,2}\right\|_\frob^2-2\left\langle\tensA\times_k\matu_{k,2}^\T\times_{j\neq k}\matu_j^\T,\tensG\times_k\matR_{k,2}\right\rangle\right).\label{eq: projection step 1}
\end{align}
In order to solve~\eqref{eq: orth projection on tangent cone}, we aim to find the unknowns $\tensC$, $\matu_{k,1}$, $\matu_{k,2}$ and $\matR_{k,2}$ for $k\in[d]$ in~\eqref{eq: projection step 1}. Generally speaking, the computation of $\proj_{\tangent_\tensX\!\tensM_{\leq\vecr}}\!\tensA$ is divided into two steps: 1) by fixing $\matu_{k,1}$ and $\matu_{k,2}$, we yield closed-form solutions of $\tensC$ and $\matR_{k,2}$; 2) we take the solutions into~\eqref{eq: projection step 1} and determine $\matu_{k,1}$ by an optimization problem, \revise{then $\matu_{k,2}$ is naturally obtained from $[\matu_{k}\ \matu_{k,1}\ \matu_{k,2}]\in\mathcal{O}(n_k)$.}

\paragraph{Step 1: representing $\tensC$ and $\matR_{k,2}$ by $\matu_{k,1}$ and $\matu_{k,2}$}
By fixing $\matu_{k,1}$ and $\matu_{k,2}$ for $k\in[d]$, we observe that~\eqref{eq: projection step 1} is a separable quadradic function with respect to $\tensC$ and $\matR_{k,2}$. Therefore, $\tensC$ and $\matR_{k,2}$ in the metric projection~\eqref{eq: orth projection on tangent cone} can be uniquely expressed by
\begin{equation}\label{eq: C and Rk2}
    \begin{aligned}
        \tensC&=\tensA\times_{k=1}^d\begin{bmatrix}
                \matu_k & \matu_{k,1}
            \end{bmatrix}^\T,\\
        \matR_{k,2}&=\left(\tensA\times_k^{}\matu_{k,2}^\T\times_{j\neq k}\matu_j^\T\right)_{(k)}\matG_{(k)}^\dagger.
    \end{aligned}
\end{equation}

\paragraph{Step 2: determining $\matu_{k,1}$}
We determine $\matu_{k,1}$ by taking~\eqref{eq: C and Rk2} into~\eqref{eq: projection step 1} and obtain 
\begin{equation*}
    \begin{aligned}
        &\left\|\tensA-\tensV\right\|_\frob^2\\
        =&\left\|\tensA\right\|_\frob^2-\left\|\tensC\right\|_\frob^2-\sum_{k=1}^d\left\|\tensG\times_k\matR_{k,2}\right\|_\frob^2\\
        =&\left\|\tensA\right\|_\frob^2-\left\|\tensC\right\|_\frob^2-\sum_{k=1}^d\left\|\left(\tensA\times_k\matu_{k,2}^\T\times_{j\neq k}\matu_j^\T\right)_{(k)}\matG_{(k)}^\dagger\matG_{(k)}^{}\right\|_\frob^2\\
        =&\left\|\tensA\right\|_\frob^2-\left\|\tensC\right\|_\frob^2-\sum_{k=1}^d\left\langle\mata_{\neq k}\proj_{\matG_{(k)}^\T},\matu_{k,2}^{}\matu_{k,2}^\T\mata_{\neq k}\proj_{\matG_{(k)}^\T}\right\rangle\\
        =&\left\|\tensA\right\|_\frob^2-\left\|\tensC\right\|_\frob^2-\sum_{k=1}^d\left\langle\mata_{\neq k}\proj_{\matG_{(k)}^\T},(\matI_{n_k}-\matu_k^{}\matu_k^\T-\matu_{k,1}^{}\matu_{k,1}^\T)\mata_{\neq k}\proj_{\matG_{(k)}^\T}\right\rangle\\
        =&\left\|\tensA\right\|_\frob^2-\left\|\tensA\times_{k=1}^d\begin{bmatrix}
                \matu_k & \matu_{k,1}
            \end{bmatrix}^\T\right\|_\frob^2\\
            &-\sum_{k=1}^d\left\|\mata_{\neq k}\proj_{\matG_{(k)}^\T}\right\|_\frob^2+\sum_{k=1}^d\left\|\matu_{k}^\T\mata_{\neq k}\proj_{\matG_{(k)}^\T}\right\|_\frob^2+\sum_{k=1}^d\left\|\matu_{k,1}^\T\mata_{\neq k}\proj_{\matG_{(k)}^\T}\right\|_\frob^2,
    \end{aligned}
\end{equation*}
where $\mata_{\neq k}:=(\tensA\times_{j\neq k}\matu_j^\T)_{(k)}\in\mathbb{R}^{n_k\times r_{-k}}$ and $\proj_{\matG_{(k)}^\T}:=\matG_{(k)}^\dagger\matG_{(k)}^{}$. Note that ${\matu_{k,2}}$ is eliminated by $\matu_k$ and $\matu_{k,1}$. Alternatively, one can also eliminate $\matu_{k,1}$ by $\matu_k$ and $\matu_{k,2}$. Nevertheless, the number of parameters of $\matu_{k,1}\in\mathbb{R}^{n_k\times(r_k-\underline{r}_k)}$ is smaller than $\matu_{k,2}\in\mathbb{R}^{n_k\times(n_k-r_k)}$ when $r_k\ll n_k$. Therefore, it is computationally favorable to formulate~\eqref{eq: orth projection on tangent cone} as
\begin{equation}
    \label{eq: reformulation of orth proj}
    \begin{aligned}
        \min_{\matu_{1,1},\matu_{2,1},\dots,\matu_{d,1}}\ &\ 
        -\left\|\tensA\times_{k=1}^d\begin{bmatrix}
            \matu_k & \matu_{k,1}
        \end{bmatrix}^\T\right\|_\frob^2+\sum_{k=1}^d\left\|\matu_{k,1}^\T\mata_{\neq k}\proj_{\matG_{(k)}^\T}\right\|_\frob^2\\
            \subjectto\ \ \quad\quad&\quad\ \begin{bmatrix}
                \matu_k & \matu_{k,1}
            \end{bmatrix}^\T\begin{bmatrix}
                \matu_k & \matu_{k,1}
            \end{bmatrix}=\matI_{r_k}\ \text{for}\ k\in[d].
    \end{aligned}
\end{equation}
Since the feasible set of~\eqref{eq: reformulation of orth proj} is compact, a global minimizer of~\eqref{eq: reformulation of orth proj} exists. 

Let $(\matu_{1,1}^*,\dots,\matu_{d,1}^*)$ be a global minimizer of~\eqref{eq: reformulation of orth proj}. By using~\eqref{eq: Tucker tangent cone} and~\eqref{eq: C and Rk2}, the projection onto the tangent cone $\tangent_\tensX\!\tensM_{\leq\vecr}$ is given by
\begin{equation}
    \label{eq: orth proj onto tangent cone by fixing Uk Uk1}\proj_{\tangent_\tensX\!\tensM_{\leq\vecr}}\!\tensA=\tensA\times_{k=1}^d\proj_{\matS_{k}^*}+\sum_{k=1}^d\tensG\times_k\left(\proj^\perp_{\matS_{k}^*}\!\left(\tensA\times_{j\neq k}^{}\matu_j^\T\right)_{(k)}\matG_{(k)}^\dagger\right)\times_{j\neq k}\matu_j,
\end{equation}
where $\matS_{k}^*:=[\matu_k\ \matu_{k,1}^*]$. We observe that the metric projection~\eqref{eq: orth proj onto tangent cone by fixing Uk Uk1} relies on the projection $\proj_{\matS_{k}^*}$ but not a specific $\matu_{k,1}^*$. Additionally,~\eqref{eq: orth proj onto tangent cone by fixing Uk Uk1} boils down to the projection onto the tangent space~\eqref{eq: orth proj onto tangent space Tucker} when $\vecr=\underline{\vecr}$. 

\paragraph{Connection to matrix varieties} It is worth noting that the projection~\eqref{eq: orth proj onto tangent cone by fixing Uk Uk1} also coincides with~\eqref{eq: projection onto tangents matrix} in matrix case. Specifically, given a matrix $\matx\in\mathbb{R}^{m\times n}_{\underline{r}}$ and the SVD of $\matx=\matu\Sigma\matv^\T$, by following the steps in tensor case, \eqref{eq: reformulation of orth proj}~boils down to 
\begin{equation}
    \label{eq: projection when d=2}
    \max_{\matu_{1,1},\matv_{2,1}}\|\matu_{1,1}^\T\mata\matv_{2,1}\|_\frob^2,\quad \subjectto\ \begin{bmatrix}
                \matu & \matu_{1,1}
            \end{bmatrix}^\T\begin{bmatrix}
                \matu & \matu_{1,1}
            \end{bmatrix}=\begin{bmatrix}
                \matv & \matv_{2,1}
            \end{bmatrix}^\T\begin{bmatrix}
                \matv & \matv_{2,1}
            \end{bmatrix}=\matI_{r}.
\end{equation}
In fact, \eqref{eq: projection when d=2} has a closed-form solution $(\matu^*,\matv^*)$, which is the leading $(r-\underline{r})$ singular vectors of the matrix $\proj_\matu^\perp\!\mata\!\proj_\matv^\perp$. Specifically, \revise{since $\matu^\T\matu^*=0$ and $\matv^\T\matv^*=0$, $(\matu^*,\matv^*)$ is a feasible point of~\eqref{eq: projection when d=2}}. Furthermore, for all feasible points $(\matu_{1,1},\matv_{2,1})$, it follows from $\proj_\matu^\perp\!\matu_{1,1}^{}=\matu_{1,1}$,  $\proj_\matv^\perp\!\matv_{2,1}^{}=\matv_{2,1}$, and the Eckart--Young theorem that 
\[\|\matu_{1,1}^\T\mata\matv_{2,1}^{}\|_\frob^2=\|\matu_{1,1}^\T\!\proj_\matu^\perp\!\mata\!\proj_\matv^\perp\!\matv_{2,1}^{}\|_\frob^2\leq\|(\matu^*)^\T\!\proj_\matu^\perp\!\mata\!\proj_\matv^\perp\!\matv^*\|_\frob^2=\|(\matu^*)^\T\mata\matv^*\|_\frob^2.\]
Therefore, the optimality of $(\matu^*,\matv^*)$ of~\eqref{eq: projection when d=2} is verified.

We yield from~\eqref{eq: orth proj onto tangent cone by fixing Uk Uk1} that
\begin{equation*}
    \begin{aligned}
        \proj_{\tangent_\matx\!\mathbb{R}^{m\times n}_{\leq r}}\!\mata&=\proj_{[\matu\ \matu^*]}\!\mata\!\proj_{[\matv\ \matv^*]}+\proj_{[\matu\ \matu^*]}^\perp\!\mata\!\proj_{\matv}+\proj_\matu\!\mata\!\proj_{[\matv\ \matv^*]}^\perp\\
        &=\proj_{\tangent_\matx\!\mathbb{R}^{m\times n}_{\underline{r}}}\!\mata+\proj_{\leq (r-\underline{r})}\!\left(\proj_\matu^\perp\!\mata\!\proj_\matv^\perp\right),
    \end{aligned}
\end{equation*}
which coincides with the known results in~\eqref{eq: projection onto tangents matrix}.

In contrast with the matrix case, the global \revise{minimizer} of~\eqref{eq: reformulation of orth proj} for $d\geq 3$ is computationally intractable. Therefore, pursuing an approximate projection emerges as a more practical way to approach the metric projection.

\subsection{An approximate projection}
The projection in~\eqref{eq: orth projection on tangent cone} is unavailable in general, and hence it is natural to consider an approximation. To this end, given $\tensX=\tensG\times_{k=1}^d\matu_k$ and any $\tilde{\matu}_{k,1}\in\St(r_k-\underline{r}_k,n_k)$ that satisfies $\proj_{\matu_k}\!\tilde{\matu}_{k,1}^{}=0$ for $k\in[d]$, we construct an approximate projection via~\eqref{eq: orth proj onto tangent cone by fixing Uk Uk1} by substituting $\tilde{\matu}_{k,1}^{}$ for $\matu_{k,1}^*$ as follows.

\begin{proposition}\label{prop: approximate projection}
    Given $\tilde{\matu}_{k,1}\in\St(r_k-\underline{r}_k,n_k)$ that satisfies $\proj_{\matu_k}\!\tilde{\matu}_{k,1}=0$ for $k\in[d]$, the approximate projection, defined by 
    \begin{equation}
        \label{eq: an approximate projection}
    \approj_{\tangent_\tensX\!\tensM_{\leq\vecr}}\!\tensA:=\tensA\times_{k=1}^d\proj_{\tilde{\matS}_{k}}+\sum_{k=1}^d\tensG\times_k\left(\proj^\perp_{\tilde{\matS}_{k}}\!\left(\tensA\times_{j\neq k}^{}\matu_j^\T\right)_{(k)}\matG_{(k)}^\dagger\right)\times_{j\neq k}\matu_j,
    \end{equation}
    satisfies $\approj_{\tangent_\tensX\!\tensM_{\leq\vecr}}\!\tensA\in\tangent_{\tensX}\!\tensM_{\leq\vecr}$ and $\langle \tensA, \approj_{\tangent_\tensX\!\tensM_{\leq\vecr}}\!\tensA\rangle=\|\approj_{\tangent_\tensX\!\tensM_{\leq\vecr}}\!\tensA\|_\frob^2$, where $\tilde{\matS}_{k}:=[\matu_k\ \tilde{\matu}_{k,1}]$.
\end{proposition}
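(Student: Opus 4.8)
The plan is to recognize~\eqref{eq: an approximate projection} as the Euclidean orthogonal projection of $\tensA$ onto a fixed \emph{linear} subspace, after which both assertions become structural consequences rather than computations. Fixing $\tilde{\matu}_{k,1}$ and choosing any $\matu_{k,2}\in\St(n_k-r_k,n_k)$ whose columns form an orthonormal basis of $\Span([\matu_k\ \tilde{\matu}_{k,1}])^\perp$ (such a completion exists because $\proj_{\matu_k}\tilde{\matu}_{k,1}=0$ together with $\tilde{\matu}_{k,1}\in\St(r_k-\underline{r}_k,n_k)$ forces $\matu_k^\T\tilde{\matu}_{k,1}=0$, so $[\matu_k\ \tilde{\matu}_{k,1}]$ already has orthonormal columns), I would introduce
\[
\mathcal{S}:=\Big\{\tensC\times_{k=1}^d[\matu_k\ \tilde{\matu}_{k,1}]+\sum_{k=1}^d\tensG\times_k(\matu_{k,2}\matR_{k,2})\times_{j\neq k}\matu_j:\ \tensC\in\mathbb{R}^{r_1\times\cdots\times r_d},\ \matR_{k,2}\in\mathbb{R}^{(n_k-r_k)\times\underline{r}_k}\Big\},
\]
which is exactly the parametrization~\eqref{eq: Tucker tangent cone} with the factor matrices frozen. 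Since $[\matu_k\ \tilde{\matu}_{k,1}\ \matu_{k,2}]\in\mathcal{O}(n_k)$, Theorem~\ref{thm: tangent cone of Tucker} gives $\mathcal{S}\subseteq\tangent_\tensX\!\tensM_{\leq\vecr}$, and $\mathcal{S}$ is a linear subspace because $\tensC$ and $\matR_{k,2}$ range over linear spaces.

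For the membership claim it then suffices to show $\approj_{\tangent_\tensX\!\tensM_{\leq\vecr}}\tensA\in\mathcal{S}$. Using $\proj_{\tilde{\matS}_k}=[\matu_k\ \tilde{\matu}_{k,1}][\matu_k\ \tilde{\matu}_{k,1}]^\T$ and $\proj^\perp_{\tilde{\matS}_k}=\matu_{k,2}\matu_{k,2}^\T$, the first term of~\eqref{eq: an approximate projection} equals $\tensC\times_{k=1}^d[\matu_k\ \tilde{\matu}_{k,1}]$ with $\tensC=\tensA\times_{k=1}^d[\matu_k\ \tilde{\matu}_{k,1}]^\T$, and its $k$-th summand equals $\tensG\times_k(\matu_{k,2}\matR_{k,2})\times_{j\neq k}\matu_j$ with $\matR_{k,2}=\matu_{k,2}^\T(\tensA\times_{j\neq k}\matu_j^\T)_{(k)}\matG_{(k)}^\dagger$ of the required size $(n_k-r_k)\times\underline{r}_k$; these are precisely the coefficients~\eqref{eq: C and Rk2}. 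Hence $\approj_{\tangent_\tensX\!\tensM_{\leq\vecr}}\tensA\in\mathcal{S}\subseteq\tangent_\tensX\!\tensM_{\leq\vecr}$.

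For the inner-product identity I would argue that $\approj_{\tangent_\tensX\!\tensM_{\leq\vecr}}\tensA=\proj_{\mathcal{S}}\tensA$, so that $\tensA-\approj_{\tangent_\tensX\!\tensM_{\leq\vecr}}\tensA\perp\mathcal{S}$; since $\approj_{\tangent_\tensX\!\tensM_{\leq\vecr}}\tensA\in\mathcal{S}$, the identity $\langle\tensA,\approj_{\tangent_\tensX\!\tensM_{\leq\vecr}}\tensA\rangle=\|\approj_{\tangent_\tensX\!\tensM_{\leq\vecr}}\tensA\|_\frob^2$ follows immediately from the Pythagorean relation. Writing $\approj_{\tangent_\tensX\!\tensM_{\leq\vecr}}\tensA=\tensV_0+\sum_{k=1}^d\tensV_k$ as in~\eqref{eq: definition of Vk}, the summands are mutually orthogonal, so $\|\approj_{\tangent_\tensX\!\tensM_{\leq\vecr}}\tensA\|_\frob^2=\sum_{k=0}^d\|\tensV_k\|_\frob^2$, and it suffices to verify the per-term identities $\langle\tensA,\tensV_0\rangle=\|\tensV_0\|_\frob^2$ and $\langle\tensA,\tensV_k\rangle=\|\tensV_k\|_\frob^2$. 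The first follows by moving every mode product $\times_k\proj_{\tilde{\matS}_k}$ across the Frobenius inner product (self-adjointness) and collapsing $\proj_{\tilde{\matS}_k}^2=\proj_{\tilde{\matS}_k}$. The second, after unfolding in mode $k$ and using orthonormality of $\matu_{k,2}$ and of $(\matu_j)^{\otimes j\neq k}$, reduces to $\langle\matY,\matY\proj_{\matG_{(k)}^\T}\rangle=\|\matY\proj_{\matG_{(k)}^\T}\|_\frob^2$ with $\matY:=\matu_{k,2}^\T(\tensA\times_{j\neq k}\matu_j^\T)_{(k)}$, which holds because $\proj_{\matG_{(k)}^\T}=\matG_{(k)}^\dagger\matG_{(k)}$ is a symmetric idempotent.

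The main obstacle is this last bookkeeping: keeping the mode-$k$ unfoldings, the pseudoinverse $\matG_{(k)}^\dagger$, and the two projectors $\proj_{\tilde{\matS}_k}$ and $\proj_{\matG_{(k)}^\T}$ consistent so that each $\tensV_k$ is genuinely the component of the orthogonal projection of $\tensA$ onto the corresponding mutually orthogonal summand of $\mathcal{S}$. Once the orthogonality $\langle\tensV_i,\tensV_j\rangle=0$ for $i\neq j$ (already recorded after~\eqref{eq: definition of Vk}) is invoked, the computation is routine and parallels Step~1 of the exact metric-projection derivation. The only genuinely new input is the observation that freezing $\tilde{\matu}_{k,1}$ in place of the optimal $\matu_{k,1}^*$ affects neither the cone membership nor the Pythagorean identity, since both depend only on $\mathcal{S}$ being a linear subspace contained in $\tangent_\tensX\!\tensM_{\leq\vecr}$, not on $\mathcal{S}$ being the particular slice that minimizes $\|\tensA-\proj_{\mathcal{S}}\tensA\|_\frob$.
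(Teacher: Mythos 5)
Your proposal is correct, and its computational core coincides with the paper's proof: membership is established exactly as in the paper, by exhibiting the coefficients \eqref{eq: C and Rk2} so that $\approj_{\tangent_\tensX\!\tensM_{\leq\vecr}}\tensA$ takes the form \eqref{eq: Tucker tangent cone} with frozen $\tilde{\matu}_{k,1}$ and $\matu_{k,2}$, and your per-term verification (mutual orthogonality of the $d+1$ components as recorded after \eqref{eq: definition of Vk}, plus symmetry and idempotence of $\proj_{\tilde{\matS}_{k}}$, $\proj_{\tilde{\matS}_{k}}^\perp$, and $\matG_{(k)}^\dagger\matG_{(k)}^{}$) is precisely the chain of equalities the paper uses to compute $\langle\tensA,\approj_{\tangent_\tensX\!\tensM_{\leq\vecr}}\tensA\rangle$. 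What you do differently is the framing: you identify the frozen parametrization $\mathcal{S}$ as a \emph{linear subspace} contained in $\tangent_\tensX\!\tensM_{\leq\vecr}$ and assert that $\approj_{\tangent_\tensX\!\tensM_{\leq\vecr}}$ is the exact Euclidean orthogonal projection onto $\mathcal{S}$, so the inner-product identity becomes the Pythagorean relation for the residual $\tensA-\proj_{\mathcal{S}}\tensA\perp\mathcal{S}$. The paper never states this stronger fact; it proves only the two properties in the proposition by direct computation. Your viewpoint buys more: linearity and nonexpansiveness of $\approj_{\tangent_\tensX\!\tensM_{\leq\vecr}}$ in $\tensA$, and a clean separation of the approximation error relative to the metric projection \eqref{eq: orth projection on tangent cone} (it stems solely from fixing the slice $\mathcal{S}$ rather than optimizing over $\matu_{k,1}$). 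The price is one extra justification, which you correctly flag as the main bookkeeping: that $\matR_{k,2}=\matu_{k,2}^\T\bigl(\tensA\times_{j\neq k}\matu_j^\T\bigr)_{(k)}\matG_{(k)}^\dagger$ is the least-squares minimizer over $\mathcal{S}_k$, i.e., that the pseudoinverse solves the normal equations for the non-orthogonal factor $\matG_{(k)}$; this is exactly the separable-quadratic argument of the paper's Step 1 leading to \eqref{eq: C and Rk2}, so the ingredient is available and your argument closes without a gap. In any case, even if one declines to justify the stronger projection claim, your fallback per-term identities already yield the proposition, which is the paper's route.
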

\begin{proof}
    For the sake of brevity, we introduce the notation $\approj_{\tangent_\tensX\!\tensM_{\leq\vecr}}\!\tensA=\tilde{\tensV}_0+\sum_{k=1}^d\tilde{\tensV}_k$ in a similar fashion as~\eqref{eq: definition of Vk} for~\eqref{eq: an approximate projection}. It is direct to verify that 
    \begin{equation*}
        \begin{aligned}
            \tilde{\tensV}_0 &=\left(\tensA\times_{j=1}^d[\matu_j\ \tilde{\matu}_{j,1}]^\T\right)\times_{i=1}^d[\matu_i\ \tilde{\matu}_{i,1}],\\
            \tilde{\tensV}_k &=\tensG\times_k\left(\tilde{\matu}_{k,2}^{}\tilde{\matu}_{k,2}^\T\left(\tensA\times_{j\neq k}^{}\matu_j^\T\right)_{(k)}\matG_{(k)}^\dagger\right)\times_{j\neq k}\matu_j,
        \end{aligned}
    \end{equation*}
    where $\tilde{\matu}_{k,2}\in\St(n_k-r_k,n_k)$ satisfies $[\matu_k\ \tilde{\matu}_{k,1}\ \tilde{\matu}_{k,2}]\in\mathcal{O}(n_k)$. Hence, $\approj_{\tangent_\tensX\!\tensM_{\leq\vecr}}\!\tensA$ is of the form~\eqref{eq: Tucker tangent cone} in the tangent cone $\tangent_{\tensX}\!\tensM_{\leq\vecr}$.

    Note that $\langle\tilde{\tensV}_i,\tilde{\tensV}_j\rangle=0$ for $i\neq j$ and thus $\|\approj_{\tangent_\tensX\!\tensM_{\leq\vecr}}\!\tensA\|_\frob^2=\sum_{k=0}^d\|\tilde{\tensV}_k\|_\frob^2$. By using  $\proj_{\tilde{\matS}_{k}}^2=\proj_{\tilde{\matS}_{k}}=\proj_{\tilde{\matS}_{k}}^\T$, $(\proj^\perp_{\tilde{\matS}_{k}})^2=\proj^\perp_{\tilde{\matS}_{k}}=(\proj^\perp_{\tilde{\matS}_{k}})^\T$, $(\matG_{(k)}^\dagger\matG_{(k)}^{})^2=\matG_{(k)}^\dagger\matG_{(k)}^{}=(\matG_{(k)}^\dagger\matG_{(k)}^{})^\T$, and $\matv_k^\T\matv_k^{}=\matI_{r_{-k}}$ with $\matv_k=(\matu_j)^{\otimes j\neq k}$, we yield that 
    \begin{equation*}
        \begin{aligned}
            \left\langle \tensA, \approj_{\tangent_\tensX\!\tensM_{\leq\vecr}}\!\tensA\right\rangle
            &=\left\langle \tensA, \tilde{\tensV}_0+\sum_{k=1}^d\tilde{\tensV}_k\right\rangle
            =\langle \tensA, \tensA\times_{k=1}^d\proj_{\tilde{\matS}_{k}}\rangle+\sum_{k=1}^d\langle \tensA,\tilde{\tensV}_k\rangle\\
            &=\left\|\tilde{\tensV}_0\right\|_\frob^2+\sum_{k=1}^d\left\langle\mata_{(k)},\left(\proj_{\tilde{\matS}_{k}}^\perp\!\mata_{(k)}\matv_k\matG_{(k)}^\dagger\right)\matG_{(k)}\matv_k^\T\right\rangle\\
            &=\left\|\tilde{\tensV}_0\right\|_\frob^2+\sum_{k=1}^d\left\|\left(\proj_{\tilde{\matS}_{k}}^\perp\!\mata_{(k)}\matv_k\matG_{(k)}^\dagger\right)\matG_{(k)}\matv_k^\T\right\|_\frob^2\\
            &=\left\|\tilde{\tensV}_0\right\|_\frob^2+\sum_{k=1}^d\left\|\tensG\times_k\left(\proj^\perp_{\tilde{\matS}_{k}}\!\left(\tensA\times_{j\neq k}^{}\matu_j^\T\right)_{(k)}\matG_{(k)}^\dagger\right)\times_{j\neq k}\matu_j\right\|_\frob^2\\
            &=\|\tilde{\tensV}_0\|_\frob^2+\sum_{k=1}^d\|\tilde{\tensV}_k\|_\frob^2\\
            &=\left\|\approj_{\tangent_\tensX\!\tensM_{\leq\vecr}}\!\tensA\right\|_\frob^2.
        \end{aligned}
    \end{equation*}\qed
\end{proof}

A direct consequence of Proposition~\ref{prop: approximate projection} is that $\langle \tensA, \proj_{\tangent_\tensX\!\tensM_{\leq\vecr}}\!\tensA\rangle=\|\proj_{\tangent_\tensX\!\tensM_{\leq\vecr}}\!\tensA\|_\frob^2$ and $\|\tensA-\approj_{\tangent_\tensX\!\tensM_{\leq\vecr}}\!\tensA\|_\frob^2=\|\tensA\|_\frob^2-\|\approj_{\tangent_\tensX\!\tensM_{\leq\vecr}}\!\tensA\|_\frob^2$. Therefore, it follows from~\eqref{eq: orth projection on tangent cone} that
    \begin{align}
    \|\approj_{\tangent_\tensX\!\tensM_{\leq\vecr}}\!\tensA\|_\frob^2&=\|\tensA\|_\frob^2-\|\tensA-\approj_{\tangent_\tensX\!\tensM_{\leq\vecr}}\!\tensA\|_\frob^2\nonumber\\
    &\leq\|\tensA\|_\frob^2-\|\tensA-\proj_{\tangent_\tensX\!\tensM_{\leq\vecr}}\!\tensA\|_\frob^2=\|\proj_{\tangent_\tensX\!\tensM_{\leq\vecr}}\!\tensA\|_\frob^2.\label{eq: tildePA and PA}
    \end{align}

The approximate projection can be explicitly computed by Algorithm~\ref{alg: orth proj onto tangent cone}. 
\revise{Specifically, since it is not straightforward to access the global minimizer of~\eqref{eq: reformulation of orth proj}, we consider randomly choosing matrices $\tilde{\matu}_{k,1}\in\St(r_k-\underline{r}_k,n_k)$ such that $\matu_k^\perp\tilde{\matu}_{k,1}^{}=0$ for $k\in[d]$. For instance, one can resort to QR factorization of the matrix $[\matu_{k}\ \matI_{n_k}]$ and randomly choose $(r_k-\underline{r}_k)$ columns from the last $(n_k-\underline{r}_k)$ columns of the Q-factor. Even though $\tilde{\matu}_{k,1}$ is chosen randomly, $\approj_{\tangent_\tensX\!\tensM_{\leq\vecr}}(-\nabla f(\tensX))\in\tangent_\tensX\!\tensM_{\leq\vecr}$ is still a descent direction for $f$ since
\[\langle -\nabla f(\tensX), \approj_{\tangent_\tensX\!\tensM_{\leq\vecr}}(-\nabla f(\tensX))\rangle=\|\approj_{\tangent_\tensX\!\tensM_{\leq\vecr}}(-\nabla f(\tensX))\|_\frob^2\geq 0\]
by letting $\tensA=-\nabla f(\tensX)$ in Proposition~\ref{prop: approximate projection}. }

\begin{algorithm}
    \caption{The approximate projection onto $\tangent_\tensX\!\tensM_{\leq\vecr}$}
    \label{alg: orth proj onto tangent cone}
    \begin{algorithmic}[1]
        \REQUIRE $\tensX=\tensG\times_1\matu_1\cdots\times_d\matu_d$ with $\ranktc(\tensX)=\underline{\vecr}\leq\vecr$, and a tensor $\tensA\in\mathbb{R}^{n_1\times\cdots\times n_d}$.
        \STATE Choose $\tilde{\matu}_{1,1},\dots,\tilde{\matu}_{d,1}$ \revise{randomly such that $\matu_k^\perp\tilde{\matu}_{k,1}^{}=0$.} 
        \STATE Compute the approximate projection by~\eqref{eq: an approximate projection}.
        \ENSURE $\approj_{\tangent_\tensX\!\tensM_{\leq\vecr}}\!\tensA$.
    \end{algorithmic}
\end{algorithm}

\section{Gradient-related approximate projection method}\label{sec: GRAP}
In this section, we aim to design line search methods by taking advantage of the tangent cone in Theorem~\ref{thm: tangent cone of Tucker}. One instinctive approach is the projected gradient descent method 
\[\tensX^{(t+1)}=\proj_{\leq\vecr}\!\left(\tensX^{(t)}+s^{(t)}\proj_{\tangent_{\tensX^{(t)}}\!\tensM_{\leq\vecr}}(-\nabla f(\tensX^{(t)}))\right),\]
which is a generalization of the Riemannian gradient descent for minimizing $f$ on $\tensM_\vecr$; see, e.g.,~\cite{schneider2015convergence,olikier2023first} for matrix varieties. However, the metric projections $\proj_{\tangent_{\tensX^{(t)}}\!\tensM_{\leq\vecr}}$ and $\proj_{\leq\vecr}$ do not enjoy closed-form expressions. Therefore, we propose the gradient-related approximate projection method by exploiting the approximate projection~\eqref{eq: an approximate projection} and HOSVD. 

\subsection{Algorithm}
Algorithm~\ref{alg: GRAP} lists the proposed gradient-related approximate projection method (GRAP) for solving~\eqref{eq: problem (P)}. 

\begin{algorithm}[htbp]
    \caption{gradient-related approximate projection method (GRAP)}
    \label{alg: GRAP}
    \begin{algorithmic}[1]
        \REQUIRE Initial guess $\tensX^{(0)}\in\tensM_{\leq\vecr}$, $\omega=(0,1]$, backtracking parameters $\rho, a\in(0,1), {{s}_{\min}}>0$.
        
        \WHILE{the stopping criteria are not satisfied}
        \revise{
            \STATE Compute $g^{(t)}=\approj_{\tangent_{\tensX^{(t)}}\!\tensM_{\leq\vecr}}(-\nabla f(\tensX^{(t)}))$ by Algorithm~\ref{alg: orth proj onto tangent cone} until the angle condition~\eqref{eq: angle condition} is satisfied.}\label{alg: GRAP restart}
        \STATE Choose stepsize $s^{(t)}$ by Armijo backtracking line search~\eqref{eq: Armijo}. 
        \STATE Update $\tensX^{(t+1)}=\proj_{\leq\vecr}^{\mathrm{HO}}\!\left(\tensX^{(t)}+s^{(t)}g^{(t)}\right)$ \revise{and $t=t+1$}. \label{line: 4 in GRAP}
        \ENDWHILE
        \ENSURE $\tensX^{(t)}$.
    \end{algorithmic}
\end{algorithm}

Instead of solving~\eqref{eq: orth proj onto tangent cone by fixing Uk Uk1} for exact metric projection, the GRAP method substitutes~\eqref{eq: orth proj onto tangent cone by fixing Uk Uk1} by the approximate projection~\eqref{eq: an approximate projection}. Additionally, the metric projection $\proj_{\leq\vecr}$ is replaced by $\proj_{\leq\vecr}^{\mathrm{HO}}$. In summary, the update of iterates in GRAP is 
\[\tensX^{(t+1)}=\proj^{\mathrm{HO}}_{\leq\vecr}\!\left(\tensX^{(t)}+s^{(t)}\approj_{\tangent_{\tensX^{(t)}}\!\tensM_{\leq\vecr}}(-\nabla f(\tensX^{(t)}))\right).\]
Moreover, to ensure that a search direction $g^{(t)}\in\tangent_{\tensX^{(t)}}\!\tensM_{\leq\vecr}$ is sufficiently gradient-related, the \emph{angle condition}
\begin{equation}
    \label{eq: angle condition}
    \langle-\nabla f(\tensX^{(t)}),g^{(t)}\rangle\geq\omega\,\|\proj_{\tangent_{\tensX^{(t)}}\!\tensM_{\leq\vecr}}(-\nabla f(\tensX^{(t)}))\|_\frob\|g^{(t)}\|_\frob
\end{equation}
is imposed with $\omega\in(0,1]$.

Specifically, if $g^{(t)}=\approj_{\tangent_{\tensX^{(t)}}\!\tensM_{\leq\vecr}}(-\nabla f(\tensX^{(t)}))$ is rejected by the angle condition, we repeat Algorithm~\ref{alg: orth proj onto tangent cone} with other $\tilde{\matu}_{k,1}$ until the angle condition is satisfied. The rationale behind this is that the approximate projection~\eqref{eq: an approximate projection} depends on $\Span([\matu_k\ \tilde{\matu}_{k,1}])$ but not a specific $\tilde{\matu}_{k,1}$, and going through all choices of $\Span(\tilde{\matu}_{k,1})$ is able to find the exact projection~\eqref{eq: orth proj onto tangent cone by fixing Uk Uk1} of $-\nabla f(\tensX^{(t)})$, which satisfies the angle condition. In practice, the angle condition can be ignored if $\tensX^{(t)}\in\tensM_\vecr$ since the search direction $g^{(t)}$ boils down to  the exact projection.

For the selection of stepsize, we consider the Armijo backtracking line search. Specifically, given an initial stepsize $s_0^{(t)}>0$, find the smallest integer $l$, such that for {$s^{(t)}=\rho^l 
s_0^{(t)}>{s}_{\min}$}, the inequality 
\begin{equation}
    f(\tensX^{(t)})-f(\proj_{\leq\vecr}^{\mathrm{HO}}(\tensX^{(t)}+s^{(t)}g^{(t)}))\geq s^{(t)} a \langle-\nabla f(\tensX^{(t)}),g^{(t)}\rangle\label{eq: Armijo}
\end{equation}
holds, where $\rho,\ a\in(0,1),\ {{s}_{\min}}>0$ are backtracking parameters. \revise{The Armijo condition~\eqref{eq: Armijo} is always achievable if $\|g^{(t)}\|_\frob\neq 0$. Denote $\tensX(s)=\proj_{\leq\vecr}^{\mathrm{HO}}(\tensX^{(t)}+s g^{(t)})$. It follows from the Taylor expansion and Proposition~\ref{prop: retraction} that
\begin{equation*}
    \begin{aligned}
        f(\tensX(s))&=f(\tensX^{(t)})+\langle\nabla f(\tensX^{(t)}),\proj_{\leq\vecr}^{\mathrm{HO}}(\tensX^{(t)}+sg^{(t)})-\tensX^{(t)}\rangle+o(\|\tensX(s)-\tensX^{(t)}\|_\frob)\\
        &=f(\tensX^{(t)})+\langle\nabla f(\tensX^{(t)}),sg^{(t)}+o(s)\rangle+o(\|\tensX(s)-\tensX^{(t)}\|_\frob)\\
        &=f(\tensX^{(t)})+sa\langle\nabla f(\tensX^{(t)}),g^{(t)}\rangle+s(1-a)\langle\nabla f(\tensX^{(t)}),g^{(t)}\rangle+o(s).
    \end{aligned}
\end{equation*}
Since $\langle-\nabla f(\tensX^{(t)}),g^{(t)}\rangle=\|g^{(t)}\|_\frob^2>0$ from Proposition~\ref{prop: approximate projection}, it holds that 
\begin{equation*}
    \begin{aligned}
        f(\tensX^{(t)})-f(\tensX(s))&=sa\langle-\nabla f(\tensX^{(t)}),g^{(t)}\rangle+s(1-a)\|g^{(t)}\|_\frob^2+o(s)\\
        &\geq sa\langle-\nabla f(\tensX^{(t)}),g^{(t)}\rangle
    \end{aligned}
\end{equation*}
for sufficiently small $s>0$. }

\revise{It is worth noting that the proposed GRAP method is a line search method on $\tensM_{\leq\vecr}$ that is able to deal with rank-deficient points in $\tensM_{\leq\vecr}\setminus\tensM_{\vecr}$, although GRAP method will not generate a rank-deficient point in the most likely cases. Similar observations can be found in the matrix case~\cite{schneider2015convergence}.}

\subsection{Global convergence}
Let $\{\tensX^{(t)}\}_{t\geq 0}$ be an infinite sequence generated by Algorithm~\ref{alg: GRAP}. We prove that the stationary measure $\|\proj_{\tangent_{\tensX^{(t)}}\!\tensM_{\leq\vecr}}(-\nabla f(\tensX^{(t)}))\|_\frob$ converges to $0$.
\begin{theorem}\label{thm: GRAP global}
    Let $\{\tensX^{(t)}\}_{t\geq 0}$ be an infinite sequence generated by Algorithm~\ref{alg: GRAP}. Assume $f$ is bounded below \revise{by $f^*$}, it holds that 
    \[\lim_{t\to\infty} \|\proj_{\tangent_{\tensX^{(t)}}\!\tensM_{\leq\vecr}}(-\nabla f(\tensX^{(t)}))\|_\frob=0.\]
    Moreover, Algorithm~\ref{alg: GRAP} returns $\tensX^{(t)}\in\tensM_{\leq\vecr}$ satisfying $\|\!\proj_{\tangent_{\tensX^{(t)}}\!\tensM_{\leq\vecr}}(-\nabla f(\tensX^{(t)}))\|_\frob<\epsilon$ after $\left\lceil{f(\tensX^{(0)})}/{({s}_{\min}a\,\omega^2\epsilon^2)}\right\rceil$ iterations at most. 
\end{theorem}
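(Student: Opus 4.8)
The plan is to derive a per-iteration sufficient-decrease inequality controlling the drop in $f$ from below by the squared stationarity measure $\|\proj_{\tangent_{\tensX^{(t)}}\!\tensM_{\leq\vecr}}(-\nabla f(\tensX^{(t)}))\|_\frob^2$, and then to telescope it. First I would observe that, whether or not the restart is triggered, the accepted direction obeys $\langle-\nabla f(\tensX^{(t)}),g^{(t)}\rangle=\|g^{(t)}\|_\frob^2$: for $g^{(t)}=\approj_{\tangent_{\tensX^{(t)}}\!\tensM_{\leq\vecr}}(-\nabla f(\tensX^{(t)}))$ this is exactly Proposition~\ref{prop: approximate projection}, and for the restart $g^{(t)}=-\nabla f(\tensX^{(t)})$ it is immediate. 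Moreover the angle condition~\eqref{eq: angle condition}, in the equivalent form $\|g^{(t)}\|_\frob\geq\omega\|\proj_{\tangent_{\tensX^{(t)}}\!\tensM_{\leq\vecr}}(-\nabla f(\tensX^{(t)}))\|_\frob$, also holds after a restart: since $\tangent_{\tensX^{(t)}}\!\tensM_{\leq\vecr}$ is a cone (so $\langle x,\proj_{\tangent}x\rangle=\|\proj_{\tangent}x\|_\frob^2$ by the usual variational argument, hence $\|\proj_{\tangent}x\|_\frob\leq\|x\|_\frob$), we get $\|g^{(t)}\|_\frob=\|\nabla f(\tensX^{(t)})\|_\frob\geq\|\proj_{\tangent_{\tensX^{(t)}}\!\tensM_{\leq\vecr}}(-\nabla f(\tensX^{(t)}))\|_\frob$. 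Feeding these facts, the Armijo rule~\eqref{eq: Armijo}, and the accepted lower bound $s^{(t)}>{s}_{\min}$ into one chain yields
\[
f(\tensX^{(t)})-f(\tensX^{(t+1)})\geq s^{(t)}a\,\|g^{(t)}\|_\frob^2\geq {s}_{\min}\,a\,\omega^2\,\|\proj_{\tangent_{\tensX^{(t)}}\!\tensM_{\leq\vecr}}(-\nabla f(\tensX^{(t)}))\|_\frob^2 .
\]

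The crux is to justify that the backtracking actually returns a stepsize $s^{(t)}>{s}_{\min}$ satisfying~\eqref{eq: Armijo}, i.e. that the line search is well defined. For a tangent-cone direction I would use the retraction property of $\retr_{\tensX}^{\mathrm{HO}}$ (Proposition~\ref{prop: retraction}) together with Corollary~\ref{coro: bound of retraction}: the expansion $\retr_{\tensX}^{\mathrm{HO}}(sg)=\tensX+sg+o(s)$ and the smoothness of $f$ give $f(\retr_{\tensX^{(t)}}^{\mathrm{HO}}(sg^{(t)}))=f(\tensX^{(t)})-s\|g^{(t)}\|_\frob^2+o(s)$, so with $a\in(0,1)$ the Armijo inequality holds for all sufficiently small $s$; backtracking then overshoots the threshold by at most a factor $\rho$, which produces a uniform lower bound on $s^{(t)}$ once $\nabla f$ is Lipschitz on the sublevel set $\{\tensX\in\tensM_{\leq\vecr}:f(\tensX)\leq f(\tensX^{(0)})\}$ containing all iterates. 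I expect this to be the main obstacle, chiefly because the restart direction $-\nabla f(\tensX^{(t)})$ need not lie in $\tangent_{\tensX^{(t)}}\!\tensM_{\leq\vecr}$ (restart is only activated at rank-deficient points, by the discussion following~\eqref{eq: angle condition}), so Proposition~\ref{prop: retraction} and Corollary~\ref{coro: bound of retraction} do not apply verbatim; there one must control $\proj_{\leq\vecr}^{\mathrm{HO}}(\tensX^{(t)}-s\nabla f(\tensX^{(t)}))-\tensX^{(t)}$ directly through the quasi-optimality~\eqref{eq: quasi-optimal HOSVD} of HOSVD and the fact that $\tensX^{(t)}$ itself is a feasible competitor, and argue that the resulting first-order behaviour still delivers the required decrease.

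With the sufficient-decrease inequality in hand, the remaining steps are routine. Summing over $t=0,1,\dots,T-1$ and telescoping gives
\[
{s}_{\min}\,a\,\omega^2\sum_{t=0}^{T-1}\|\proj_{\tangent_{\tensX^{(t)}}\!\tensM_{\leq\vecr}}(-\nabla f(\tensX^{(t)}))\|_\frob^2\leq f(\tensX^{(0)})-f(\tensX^{(T)})\leq f(\tensX^{(0)})-f^{*},
\]
so the left-hand series converges and hence $\|\proj_{\tangent_{\tensX^{(t)}}\!\tensM_{\leq\vecr}}(-\nabla f(\tensX^{(t)}))\|_\frob\to 0$, proving the first claim. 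For the complexity bound I would argue by contradiction: if the stationarity measure stayed at least $\epsilon$ for every $t=0,\dots,T-1$, the displayed estimate would force $T\,{s}_{\min}\,a\,\omega^2\epsilon^2\leq f(\tensX^{(0)})-f^{*}$, whence $T\leq (f(\tensX^{(0)})-f^{*})/({s}_{\min}a\omega^2\epsilon^2)$. Since $f^{*}\geq 0$ for the target objectives (e.g.\ the tensor-completion residual), $f(\tensX^{(0)})-f^{*}\leq f(\tensX^{(0)})$, which recovers the stated bound $\lceil f(\tensX^{(0)})/({s}_{\min}a\omega^2\epsilon^2)\rceil$ on the number of iterations needed before the stationarity measure first drops below $\epsilon$.
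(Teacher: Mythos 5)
Your proposal is correct and follows essentially the same route as the paper: a per-iteration sufficient decrease $f(\tensX^{(t)})-f(\tensX^{(t+1)})\geq s_{\min}a\omega^2\|\proj_{\tangent_{\tensX^{(t)}}\!\tensM_{\leq\vecr}}(-\nabla f(\tensX^{(t)}))\|_\frob^2$ obtained by combining $\langle-\nabla f(\tensX^{(t)}),g^{(t)}\rangle=\|g^{(t)}\|_\frob^2$, the angle condition~\eqref{eq: angle condition}, the Armijo rule~\eqref{eq: Armijo}, and $s^{(t)}>s_{\min}$, followed by telescoping against the lower bound $f^*$ and the same contradiction argument for the iteration count (including the same implicit use of nonnegativity of $f$ to replace $f(\tensX^{(0)})-f^*$ by $f(\tensX^{(0)})$, which you at least flag explicitly). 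The only divergence is your ``crux'' paragraph on well-definedness of the backtracking: that concern is superfluous for this theorem, since the hypothesis that Algorithm~\ref{alg: GRAP} generates an infinite sequence already guarantees that every accepted stepsize satisfies $s^{(t)}>s_{\min}$ and~\eqref{eq: Armijo} by construction, which is all the paper (and your main chain) actually uses.
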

\begin{proof}
    It follows from~\eqref{eq: angle condition}--\eqref{eq: Armijo} and Proposition~\ref{prop: approximate projection} that 
    \begin{equation*}
        \begin{aligned}
            f(\tensX^{(t)})-f(\tensX^{(t+1)})&\geq s^{(t)} a \langle-\nabla f(\tensX^{(t)}),g^{(t)}\rangle\\
            &\geq {s}_{\min}a\|g^{(t)}\|_\frob^2\\
            &\geq {s}_{\min}a\,\omega^2\|\proj_{\tangent_{\tensX^{(t)}}\!\tensM_{\leq\vecr}}(-\nabla f(\tensX^{(t)}))\|_\frob^2.
        \end{aligned}
    \end{equation*}
    Therefore, it yields
    \[f(\tensX^{(0)})-f^*\geq\sum_{t=0}^{\infty} {s}_{\min}a\,\omega^2\|\proj_{\tangent_{\tensX^{(t)}}\!\tensM_{\leq\vecr}}(-\nabla f(\tensX^{(t)}))\|_\frob^2,\]
    and thus $\|\proj_{\tangent_{\tensX^{(t)}}\!\tensM_{\leq\vecr}}(-\nabla f(\tensX^{(t)}))\|_\frob$ converges to $0$. 
    
    Moreover, assume that $\|\proj_{\tangent_{\tensX^{(t)}}\!\tensM_{\leq\vecr}}(-\nabla f(\tensX^{(t)}))\|_\frob\geq\epsilon$ holds for $t=0,1,\dots,T$. We have 
    \[f(\tensX^{(0)})-f(\tensX^{(T)})\geq{s}_{\min}a\,\omega^2\epsilon^2 T,\]
    and thus $T\leq{f(\tensX^{(0)})}/{({s}_{\min}a\,\omega^2\epsilon^2)}$.
    \qed
\end{proof}

It is worth noting that it requires $\mathcal{O}(\varepsilon^{-2})$ steps to achieve an $\varepsilon$-stationary point, which coincides with the classical results in Riemannian optimization; see, e.g.,~\cite[Theorem 2.5]{boumal2019global}.

\subsection{Local convergence}
Given an infinite sequence $\{\tensX^{(t)}\}_{t\geq 0}$ generated by Algorithm~\ref{alg: GRAP}, we analyze the local convergence by exploiting the \emph{\L{}ojasiewicz gradient inequality}~\cite[Definition 2.1]{schneider2015convergence}. A point $\tensX\in\tensM_{\leq\vecr}$ is said to satisfy the \L{}ojasiewicz gradient inequality if there exists $\delta,L>0$ and $\theta\in(0,1/2]$ such that
\begin{equation}
    \label{eq: Lojasiewicz}
    |f(\tensX)-f(\tensY)|^{1-\theta}\leq L\|\proj_{\tangent_\tensX\!\tensM_{\leq\vecr}}(-\nabla f(\tensY))\|_\frob
\end{equation}
holds for all $\tensY\in\tensM_{\leq\vecr}$ with $\|\tensY-\tensX\|_\frob\leq\delta$. Under the assumption that $f$ satisfies~\eqref{eq: Lojasiewicz}, we can prove the local convergence of Algorithm~\ref{alg: GRAP} is as follows.

\begin{theorem}\label{thm: GRAP local}
    Let $\{\tensX^{(t)}\}_{t\geq 0}$ be an infinite sequence generated by Algorithm~\ref{alg: GRAP}. Assume that $f$ is bounded below by $f^*$ and satisfies the \L{}ojasiewicz gradient inequality. If $\{\tensX^{(t)}\}_{t\geq 0}$ has an accumulation point $\tensX^*$, then $\tensX^{(t)}$ converges to $\tensX^*$.
    Furthermore, if $\ranktc(\tensX^*)=\vecr$, then the stationary measure $\|\proj_{\tangent_{\tensX^*}\!\tensM_{\leq\vecr}}(-\nabla f(\tensX^*))\|_\frob=\|\grad f(\tensX^*)\|_\frob=0$ and 
    \[\|\tensX^{(t)}-\tensX^*\|_\frob\leq C
    \left\{
        \begin{array}{ll}
            \mathrm{e}^{-ct},&\ \text{if}\ \theta=\frac{1}{2},\\
        t^{-\frac{\theta}{1-2\theta}},&\ \text{if}\ 0<\theta<\frac{1}{2}
        \end{array}
    \right.\]
    hold for some $C,c>0$.
\end{theorem}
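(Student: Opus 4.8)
The plan is to cast Algorithm~\ref{alg: GRAP} into the abstract \L{}ojasiewicz descent framework (cf.~\cite{schneider2015convergence}), whose three ingredients are a sufficient-decrease estimate, a \emph{bridge} between the computable stationarity measure at the iterate and the \L{}ojasiewicz measure anchored at $\tensX^*$, and the \L{}ojasiewicz inequality~\eqref{eq: Lojasiewicz} itself. Write $\eta^{(t)}:=\|\proj_{\tangent_{\tensX^{(t)}}\!\tensM_{\leq\vecr}}(-\nabla f(\tensX^{(t)}))\|_\frob$. First I would establish the sufficient-decrease estimate
\[
    f(\tensX^{(t)})-f(\tensX^{(t+1)})\geq \tfrac{a\omega}{M}\,\eta^{(t)}\,\|\tensX^{(t+1)}-\tensX^{(t)}\|_\frob ,
\]
by chaining three facts already at hand: the Armijo rule~\eqref{eq: Armijo} together with Proposition~\ref{prop: approximate projection} gives $f(\tensX^{(t)})-f(\tensX^{(t+1)})\geq s^{(t)}a\langle-\nabla f(\tensX^{(t)}),g^{(t)}\rangle=s^{(t)}a\|g^{(t)}\|_\frob^2$ (the defining identity of $\approj$, which also holds trivially for the restart direction $g^{(t)}=-\nabla f(\tensX^{(t)})$); Corollary~\ref{coro: bound of retraction} gives $\|\tensX^{(t+1)}-\tensX^{(t)}\|_\frob\leq M s^{(t)}\|g^{(t)}\|_\frob$, hence $s^{(t)}\|g^{(t)}\|_\frob\geq M^{-1}\|\tensX^{(t+1)}-\tensX^{(t)}\|_\frob$; and the angle condition~\eqref{eq: angle condition}, i.e.\ $\|g^{(t)}\|_\frob\geq\omega\,\eta^{(t)}$. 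Combining these with $f(\tensX^{(t)})-f(\tensX^{(t+1)})\geq a\|g^{(t)}\|_\frob\,(s^{(t)}\|g^{(t)}\|_\frob)$ yields the claim with $\sigma:=a\omega/M$.

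The crux is the \emph{bridge}: to use~\eqref{eq: Lojasiewicz} at $\tensX=\tensX^*$ and $\tensY=\tensX^{(t)}$ I must control $\|\proj_{\tangent_{\tensX^*}\!\tensM_{\leq\vecr}}(-\nabla f(\tensX^{(t)}))\|_\frob$, which uses the tangent cone at $\tensX^*$, by a constant multiple $\kappa\,\eta^{(t)}$ of the measure computed with the cone at $\tensX^{(t)}$. This is the main obstacle, because the tangent cone is discontinuous across rank strata, so the two cones need not be comparable in general. This is exactly where the standing hypothesis on $\ranktc(\tensX^{(t)})$ is used, and I would split accordingly. If $\ranktc(\tensX^{(t)})=\vecr$ for all large $t$, the iterates eventually lie on the smooth manifold $\tensM_\vecr$; there $\tangent_\tensX\!\tensM_{\leq\vecr}$ coincides with the tangent space $\tangent_\tensX\!\tensM_\vecr$ and $\tensX\mapsto\proj_{\tangent_\tensX\!\tensM_\vecr}$ is continuous, so $\proj_{\tangent_{\tensX^*}}$ and $\proj_{\tangent_{\tensX^{(t)}}}$ are uniformly close near $\tensX^*$ and the bridge holds with $\kappa$ near $1$ (absorbing the boundedness of $\nabla f$ on a neighborhood). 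If instead $\ranktc(\tensX^{(t)})<\vecr$ for all large $t$, the argument in the proof of Corollary~\ref{coro: stationary} shows $\tangent_{\tensX^{(t)}}\!\tensM_{\leq\vecr}$ (and, when $\ranktc(\tensX^*)<\vecr$, $\tangent_{\tensX^*}\!\tensM_{\leq\vecr}$) is the whole space, whence both measures equal $\|\nabla f(\tensX^{(t)})\|_\frob$ and the bridge holds with $\kappa=1$.

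With sufficient decrease and the bridge in place, I would run the standard telescoping argument. Set $A_t:=f(\tensX^{(t)})-f(\tensX^*)\geq0$ (monotone and convergent to $0$ since $f$ is decreasing and $\tensX^*$ is an accumulation point) and $\phi(s):=\theta^{-1}s^{\theta}$. Concavity of $\phi$ gives $\phi(A_t)-\phi(A_{t+1})\geq A_t^{\theta-1}(f(\tensX^{(t)})-f(\tensX^{(t+1)}))$, and combining with $A_t^{1-\theta}\leq L\kappa\,\eta^{(t)}$ from~\eqref{eq: Lojasiewicz} and the bridge, together with sufficient decrease, collapses the $\eta^{(t)}$ factors to yield $\|\tensX^{(t+1)}-\tensX^{(t)}\|_\frob\leq \tfrac{L\kappa}{\sigma}\bigl(\phi(A_t)-\phi(A_{t+1})\bigr)$. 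A capture argument (choosing an index at which $\tensX^{(t_0)}$ lies in the $\delta$-ball and the residual $\phi(A_{t_0})$ is small, then inducting to keep all later iterates inside the ball where~\eqref{eq: Lojasiewicz} applies) makes this valid for all $t\geq t_0$; summing gives finite length $\sum_t\|\tensX^{(t+1)}-\tensX^{(t)}\|_\frob<\infty$, so $\{\tensX^{(t)}\}$ is Cauchy and converges, and since $\tensX^*$ is an accumulation point the limit is $\tensX^*$.

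Finally, for the ``furthermore'' part with $\ranktc(\tensX^*)=\vecr$: the iterates are eventually on $\tensM_\vecr$ with $\tensX^*\in\tensM_\vecr$, so by continuity of the Riemannian gradient and Theorem~\ref{thm: GRAP global} one gets $\|\proj_{\tangent_{\tensX^*}\!\tensM_{\leq\vecr}}(-\nabla f(\tensX^*))\|_\frob=\|\grad f(\tensX^*)\|_\frob=\lim_t\eta^{(t)}=0$, giving stationarity. For the rate, I would set the tail length $S_t:=\sum_{k\geq t}\|\tensX^{(k+1)}-\tensX^{(k)}\|_\frob$, note $\|\tensX^{(t)}-\tensX^*\|_\frob\leq S_t$ and $S_t\leq \tfrac{L\kappa}{\sigma}\phi(A_t)=\tfrac{L\kappa}{\sigma\theta}A_t^{\theta}$, then feed back the sufficient-decrease and \L{}ojasiewicz estimates to obtain a recurrence in $S_t$; the two exponent regimes $\theta=\tfrac12$ and $0<\theta<\tfrac12$ yield the geometric and polynomial decay rates respectively by the classical Attouch--Bolte-type analysis.
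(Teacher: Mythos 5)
Your proposal re-derives from scratch the abstract \L{}ojasiewicz machinery that the paper's proof simply cites: the paper invokes \cite{schneider2015convergence} [Theorem 2.3, Corollary 2.11] and only verifies three facts --- the sufficient-decrease estimate, that a vanishing stationarity measure forces $\tensX^{(t+1)}=\tensX^{(t)}$, and that $\ranktc(\tensX^*)=\vecr$ forces $\tensX^{(t)}\in\tensM_{\vecr}$ for large $t$ (openness of $\tensM_\vecr$ in $\tensM_{\leq\vecr}$). In that framework the \L{}ojasiewicz inequality is applied with the measure at the \emph{iterate} (i.e., in \eqref{eq: Lojasiewicz} the projection must be taken at $\tensY=\tensX^{(t)}$ for the citation to go through), so the cone comparison you call the crux --- the ``bridge'' --- never arises; it is an artifact of anchoring the projection at $\tensX^*$. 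Moreover, taken on its own terms, your bridge fails. Writing $\eta^{(t)}:=\|\proj_{\tangent_{\tensX^{(t)}}\!\tensM_{\leq\vecr}}(-\nabla f(\tensX^{(t)}))\|_\frob$ as you do: your Case B rests on a false claim, since the tangent cone at a rank-deficient point is \emph{not} the whole space --- by Theorem~\ref{thm: tangent cone of Tucker} every element of \eqref{eq: Tucker tangent cone} has Tucker rank at most $\vecr+\underline{\vecr}$; Corollary~\ref{coro: stationary} only shows this nonconvex cone contains all $\pm$ rank-one tensors (equivalently, the normal cone is trivial), which does not make the metric projection onto it the identity, so neither measure equals $\|\nabla f(\tensX^{(t)})\|_\frob$. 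Your Case A tacitly requires $\tensX^*\in\tensM_\vecr$; but the convergence assertion of the theorem also covers full-rank iterates accumulating at a rank-deficient $\tensX^*$ --- exactly the situation of Example~\ref{eg: apocalypse}, where the iterates do converge, yet $\|\proj_{\tangent_{\tensX^*}}\!(-\nabla f(\tensX^{(t)}))\|_\frob\geq\langle-\nabla f(\tensX^{(t)}),\vece_3\circ\vece_3\circ\vece_3\rangle=2$ for all $t$ (the cone at $\tensX^*$ contains the unit tensor $\vece_3\circ\vece_3\circ\vece_3$) while $\eta^{(t)}\to 0$, so no bound of the form $\kappa\,\eta^{(t)}$ is possible. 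Even when $\tensX^*\in\tensM_\vecr$, continuity of $\tensX\mapsto\proj_{\tangent_\tensX\!\tensM_{\vecr}}$ yields only an additive error $o(1)\cdot\|\nabla f(\tensX^{(t)})\|_\frob$, not the multiplicative constant $\kappa$ you assert.

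Two further steps would fail as written. First, in the sufficient-decrease estimate you apply Corollary~\ref{coro: bound of retraction} to the restart direction $g^{(t)}=-\nabla f(\tensX^{(t)})$; that corollary is stated only for $\tensV\in\tangent_\tensX\!\tensM_{\leq\vecr}$, and the full antigradient need not lie in the cone. The paper treats the restart separately, obtaining $\|\tensX^{(t+1)}-\tensX^{(t)}\|_\frob\leq(\sqrt{d}+1)\,s^{(t)}\|g^{(t)}\|_\frob$ from the HOSVD quasi-optimality \eqref{eq: quasi-optimal HOSVD} and the definition of the metric projection, with the constant $a/(\sqrt{d}+1)$ in place of $a\omega/M$. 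Second, you never verify that $\eta^{(t)}=0$ implies $\tensX^{(t+1)}=\tensX^{(t)}$ (the paper's second claim, proved via Corollary~\ref{coro: stationary} when $\ranktc(\tensX^{(t)})<\vecr$); your telescoping and capture argument silently needs this, because when the measure vanishes the sufficient-decrease inequality places no constraint on $\|\tensX^{(t+1)}-\tensX^{(t)}\|_\frob$, and finite length can fail without it. With the variable-point reading of \eqref{eq: Lojasiewicz} (which removes the bridge entirely) and these two repairs, your outline would become a correct self-contained re-derivation of the result the paper obtains by citation.
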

\begin{proof}
    Using~\cite[Theorem 2.3, Corollary 2.11]{schneider2015convergence}, it suffices to prove the following three claims: 1) there exists $c>0$, such that 
    \[f(\tensX^{(t)})-f(\tensX^{(t+1)})\geq c\,\|\proj_{\tangent_{\tensX^{(t)}}\!\tensM_{\leq\vecr}}(-\nabla f(\tensX^{(t)}))\|_\frob\|\tensX^{(t)}-\tensX^{(t+1)}\|_\frob;\] 2) the stationary measure $\|\proj_{\tangent_{\tensX^{(t)}}\!\tensM_{\leq\vecr}}(-\nabla f(\tensX^{(t)}))\|_\frob=0$ implies $\tensX^{(t+1)}=\tensX^{(t)}$ for sufficiently large $t$; 3) if $\ranktc(\tensX^*)=\vecr$, $\tensX^{(t)}\in\tensM_\vecr$ for sufficiently large $t$. 

    For the first claim, it follows from~\eqref{eq: angle condition}--\eqref{eq: Armijo} and Corollary~\ref{coro: bound of retraction} that
    \begin{equation*}
        \begin{aligned}
            f(\tensX^{(t)})-f(\tensX^{(t+1)})&\geq  a\omega\,\|\proj_{\tangent_{\tensX^{(t)}}\!\tensM_{\leq\vecr}}(-\nabla f(\tensX^{(t)}))\|_\frob\|{s}^{(t)}g^{(t)}\|_\frob\\
            &\geq\frac{a\omega}{M}\|\proj_{\tangent_{\tensX^{(t)}}\!\tensM_{\leq\vecr}}(-\nabla f(\tensX^{(t)}))\|_\frob\|\tensX^{(t)}-\tensX^{(t+1)}\|_\frob,
        \end{aligned}
    \end{equation*}
    \revise{where $M:=1+d/\sqrt{d+1}>0$. }

    Moreover, assume that $\|\proj_{\tangent_{\tensX^{(t)}}\!\tensM_{\leq\vecr}}(-\nabla f(\tensX^{(t)}))\|_\frob=0$, holds for some large~$t$. \revise{It follows from~\eqref{eq: tildePA and PA} that 
    \[\|g^{(t)}\|_\frob=\|\approj_{\tangent_{\tensX^{(t)}}\!\tensM_{\leq\vecr}}(-\nabla f(\tensX^{(t)}))\|_\frob\leq\|\proj_{\tangent_{\tensX^{(t)}}\!\tensM_{\leq\vecr}}(-\nabla f(\tensX^{(t)}))\|_\frob=0,\] 
    and thus $g^{(t)}=0$ and $\tensX^{(t+1)}=\tensX^{(t)}$. }

    \revise{The third claim is straightforward} since $\tensM_\vecr$ is open in $\tensM_{\leq\vecr}$.
    \qed
\end{proof}

\subsection{Discussion: \revise{apocalyptic points}}
Observe that even though $\{\tensX^{(t)}\}_{t\geq 0}$ has an accumulation point $\tensX^*$, the condition $\lim_{t\to\infty}\|\proj_{\tangent_{\tensX^{(t)}}\!\tensM_{\leq\vecr}}(-\nabla f(\tensX^{(t)}))\|_\frob=0$ does not necessarily guarantee that~$\tensX^*$ is a stationary point of $f$, i.e., $\|\proj_{\tangent_{\tensX^*}\!\tensM_{\leq\vecr}}(-\nabla f(\tensX^*))\|_\frob$ can be nonzero; see the following example.
\begin{example}\label{eg: apocalypse}
    Given $\tensA=\vece_1\circ\vece_1\circ\vece_1+\vece_3\circ\vece_3\circ\vece_3\in\mathbb{R}^{n\times n\times n}$ and $\vecr=(2,2,2)$,
    consider the objective function $f(\tensX)=\|\tensX-\tensA\|_\mathrm{F}^2$, and initial guess $\tensX^{(0)}=\vece_1\circ\vece_1\circ\vece_1+\vece_2\circ\vece_2\circ\vece_2$. Then, the sequence generated by Algorithm~\ref{alg: GRAP} with a constant stepsize $s^{(t)}=\alpha\in(0,1)$ is explicitly
    \[\tensX^{(t)}=\vece_1\circ\vece_1\circ\vece_1+(1-\alpha)^{t}\vece_2\circ\vece_2\circ\vece_2,\]
    which converges to $\tensX^*=\vece_1\circ\vece_1\circ\vece_1$. According to Theorem~\ref{thm: GRAP global}, the sequence satisfies that  $\lim_{t\to\infty}\|\proj_{\tangent_{\tensX^{(t)}}\!\tensM_{\leq\vecr}}(-\nabla f(\tensX^{(t)}))\|_\frob=0$.    
    However, since $\nabla f(\tensX^*)\neq 0,$ $\tensX^*$ is not a stationary point of~$f$ by using \revise{Proposition~\ref{prop: stationary}}.    
\end{example}

More recently, such a phenomenon has been investigated and named by \emph{apocalypse}~\cite{levin2023finding}. Specifically, a point $\tensX^*\in\tensM_{\leq\vecr}$ is called \emph{apocalyptic} if there exists a sequence $\{\tensX^{(t)}\}\subseteq\tensM_{\leq\vecr}$ converging to $\tensX^*$ and a smooth function $f$, such that 
\[\lim_{t\to\infty}\|\proj_{\tangent_{\tensX^{(t)}}\!\tensM_{\leq\vecr}}(-\nabla f(\tensX^{(t)}))\|_\frob=0\ \ \text{but}\ \ \|\proj_{\tangent_{\tensX^*}\!\tensM_{\leq\vecr}}(-\nabla f(\tensX^*))\|_\frob>0.\]
The triplet $(\tensX^*,\{\tensX^{(t)}\},f)$ is called an apocalypse. Similar to the matrix varieties, we observe that the Tucker tensor varieties suffer from apocalypse at rank-deficient points in $\tensM_{\leq\vecr}$.
\begin{proposition}\label{prop: apocalypse of Tucker}
    If $\tensX^*\in\tensM_{\leq\vecr}$ has $\mathrm{rank}_{\mathrm{tc}}(\tensX^*)=\underline{\vecr}<\vecr$, then $\tensX^*$ is apocalyptic.
\end{proposition}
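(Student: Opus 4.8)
The plan is to construct an explicit apocalypse $(\tensX^*,\{\tensX^{(t)}\},f)$ by generalizing Example~\ref{eg: apocalypse}. Fix a Tucker decomposition $\tensX^*=\tensG\times_{k=1}^d\matu_k$ with $\ranktc(\tensX^*)=\underline{\vecr}$ and take the quadratic objective $f(\tensX)=\tfrac12\|\tensX-\tensA\|_\frob^2$, whose antigradient is $-\nabla f(\tensX)=\tensA-\tensX$. The guiding mechanism is that a rank-one tensor whose mode factors are orthogonal to $\Span(\matu_k)$ in every mode is a rank-\emph{increasing} tangent direction at the rank-deficient point $\tensX^*$, yet becomes \emph{normal} once the surrounding iterate has been inflated to full rank $\vecr$; steering the antigradient along such a direction therefore makes the projected antigradient vanish along the sequence while remaining nonzero at the limit.

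Concretely, since $\underline{r}_k<r_k\le n_k$ for every $k$, I would choose $\mathbf{P}_k\in\St(r_k-\underline{r}_k,n_k)$ with $\Span(\mathbf{P}_k)\subseteq\Span(\matu_k)^\perp$, so that $\matS_k:=[\matu_k\ \mathbf{P}_k]\in\St(r_k,n_k)$, and set $\tensP:=\tensH\times_{k=1}^d\mathbf{P}_k$ with a core $\tensH$ of full multilinear rank $\vecr-\underline{\vecr}$. The sequence $\tensX^{(t)}:=\tensX^*+\tfrac1t\tensP$ then has mode-$k$ rank $\underline{r}_k+(r_k-\underline{r}_k)=r_k$ because $\Span(\mathbf{P}_k)\perp\Span(\matu_k)$; hence $\tensX^{(t)}\in\tensM_{\vecr}\subseteq\tensM_{\leq\vecr}$ and $\tensX^{(t)}\to\tensX^*$. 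For the target I would pick unit vectors $\mathbf{q}_k\in\Span(\matu_k)^\perp$ that additionally satisfy $\mathbf{q}_k\perp\Span(\matS_k)$ in at least two modes (possible whenever $n_k>r_k$), and set $\tensA:=\tensX^*+\sigma\tensN$ with $\tensN:=\mathbf{q}_1\circ\cdots\circ\mathbf{q}_d$ and $\sigma>0$.

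The verification proceeds in two steps. Along the sequence, each $\tensX^{(t)}$ has full rank $\vecr$, so $\tangent_{\tensX^{(t)}}\tensM_{\leq\vecr}=\tangent_{\tensX^{(t)}}\tensM_{\vecr}$ and I would apply the tangent-space projection formula~\eqref{eq: orth proj onto tangent space Tucker} with factors $\matS_k$. Because $\mathbf{q}_k\perp\Span(\matS_k)$ in two modes, every term of~\eqref{eq: orth proj onto tangent space Tucker} applied to $\tensN$ collapses to zero, whereas $\tensP\in\bigotimes_{k=1}^d\Span(\matS_k)$ lies in the tangent space; consequently
\[\proj_{\tangent_{\tensX^{(t)}}\tensM_{\leq\vecr}}(-\nabla f(\tensX^{(t)}))=\proj_{\tangent_{\tensX^{(t)}}\tensM_{\vecr}}\!\Big(\sigma\tensN-\tfrac1t\tensP\Big)=-\tfrac1t\tensP,\]
whose norm tends to $0$. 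At the limit, $-\nabla f(\tensX^*)=\sigma\tensN\neq0$, so $\nabla f(\tensX^*)\neq0$; since $\underline{\vecr}<\vecr$, the contrapositive of Corollary~\ref{coro: stationary} shows $\tensX^*$ is not stationary, i.e.\ $\|\proj_{\tangent_{\tensX^*}\tensM_{\leq\vecr}}(-\nabla f(\tensX^*))\|_\frob>0$. Together these produce the desired apocalypse.

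The main obstacle is arranging full compatibility between two competing demands. On one hand the iterates $\tensX^{(t)}$ must be pushed all the way to full rank $\vecr$: at any rank-deficient iterate the matrix normal cones of the unfoldings are trivial (Proposition~\ref{prop: geom of matrix varieties}), so $\tensN$ would still belong to $\tangent_{\tensX^{(t)}}\tensM_{\leq\vecr}$ and its projection would not decay. On the other hand $\tensN$ must stay orthogonal to the \emph{enlarged} factor spans $\Span(\matS_k)$, which is exactly where the room ``$n_k>r_k$ in at least two modes'' enters; a single orthogonal mode is insufficient, since the tangent-space projection~\eqref{eq: orth proj onto tangent space Tucker} couples all modes. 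The crux of the proof is thus the bookkeeping showing that the two-mode orthogonality annihilates every one of the $d+1$ summands of~\eqref{eq: orth proj onto tangent space Tucker}, which is the tensorial phenomenon with no matrix analogue beyond the $d=2$ case.
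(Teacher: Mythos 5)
Your proof is correct, and its skeleton is the same as the paper's: you build the identical full-rank sequence (your $\tensX^{(t)}=\tensX^*+\tfrac{1}{t}\tensH\times_{k=1}^d\mathbf{P}_k$ is precisely the paper's iterate $\tensG^{(t)}\times_{k=1}^d[\matu_k^*\ \matu_{k,1}]$ with block-diagonal core), you certify non-stationarity of the limit by the contrapositive of Corollary~\ref{coro: stationary}, and you annihilate the projected antigradient along the sequence through the tangent-space formula~\eqref{eq: orth proj onto tangent space Tucker}. The genuine difference is the objective and hence the ``bad'' direction: the paper takes the linear functional $f(\tensX)=\vecu^\T\matX_{(k_0)}\vecv$, whose constant gradient $\ten_{(k_0)}(\vecu\vecv^\T)$ is killed via orthogonality of $\vecu$ and $\vecv$ to the column and row spans of a single unfolding, while you take the quadratic $\tfrac12\|\tensX-\tensA\|_\frob^2$ with a rank-one residual $\sigma\tensN$ orthogonal to the enlarged spans $\Span(\matS_k)$ in two modes. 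Your two-mode bookkeeping is exactly right: one orthogonal mode kills the leading term of~\eqref{eq: orth proj onto tangent space Tucker}, the second guarantees that every summand $k$ still sees an orthogonal mode $j\neq k$ (a single mode would indeed fail), and handling the moving part $-\tfrac1t\tensP$ by linearity of the subspace projection is clean. What the paper's linear choice buys is a gradient that does not move with $t$; what your choice buys is consistency with Example~\ref{eg: apocalypse} and, more importantly, correctness of the delicate step: the tangent space at $\tensX^{(t)}$ must be expressed through the enlarged frames $\matS_k$, as you do, whereas the paper's displayed computation projects with the limit factors $\matU_k^*$ (dimensionally inconsistent with the core $\tensG^{(t)}$), and orthogonality to the $\matU_k^*$-spans alone would not even make the leading term vanish once the correct frames are inserted.

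One shared caveat, not a gap relative to the paper: both constructions silently need more room than the bare hypothesis $\underline{\vecr}<\vecr$ provides. You need $\underline{r}_k<n_k$ in every mode for the $\mathbf{q}_k$ to exist (though orthogonality to $\matu_k$ is only essential in the two special modes; elsewhere $\mathbf{q}_k$ may be arbitrary), $n_k>r_k$ in at least two modes, and a core $\tensH$ of multilinear rank exactly $\vecr-\underline{\vecr}$, which exists only if $r_k-\underline{r}_k\leq\prod_{j\neq k}(r_j-\underline{r}_j)$ for all $k$ (this fails, e.g., when $\vecr-\underline{\vecr}$ has some but not all entries equal to zero). The paper's proof needs entirely analogous conditions: the existence of $\vecv$ requires $\underline{r}_{-k_0}<n_{-k_0}$, and its core $\bar{\tensG}$ of rank $\vecr-\underline{\vecr}$ has the same existence issue. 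Indeed the proposition as stated is false in degenerate cases such as $\vecr=(n_1,\dots,n_d)$, where $\tensM_{\leq\vecr}=\mathbb{R}^{n_1\times\cdots\times n_d}$ and no apocalypse can occur, so these are implicit standing assumptions of the statement rather than defects of your argument.
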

\begin{proof}
    See Appendix~\ref{app: apocalypse of Tucker}. \qed
\end{proof}

Circumventing apocalypse for Tucker tensor varieties is far beyond the main purpose of this paper. Hence, we leave it for future research. 

\section{A retraction-free gradient-related approximate projection method}\label{sec: rfGRAP}
In Algorithm~\ref{alg: GRAP}, the retraction $\retr^{\mathrm{HO}}_{\tensX}$, which requires computation of HOSVD to a $d$-dimensional tensor with size $(\underline{r}_1+r_1)\times\cdots\times(\underline{r}_d+r_d)$, is inevitable to preserve the constraint, Tucker tensor varieties. One is curious about exploiting information of the tangent cone $\tangent_\tensX\!\tensM_{\leq\vecr}$ to construct search directions and facilitate line search without retraction to save computational cost. In this section, we \revise{propose} partial projections to develop retraction-free line search method on $\tensM_{\leq\vecr}$.

\subsection{New partial projections}
Recall that any $\tensV$ in the tangent cone $\tangent_\tensX\!\tensM_{\leq{\vecr}}$ at $\tensX=\tensG\times_{k=1}^d\matu_k$ with $\ranktc(\tensX)=\underline{\vecr}$ can be parametrized in terms of $(\tensC,\{\matu_{k,1}\}_{k=1}^d,\{\matu_{k,2}\}_{k=1}^d,\{\matR_{k,2}\}_{k=1}^d)$ by~\eqref{eq: Tucker tangent cone}, i.e., 
\[\tensV=\tensV_0+\sum_{k=1}^d\tensV_k=\tensC\times_{k=1}^d\begin{bmatrix}
            \matu_k & \matu_{k,1}
        \end{bmatrix}+\sum_{k=1}^d\tensG\times_k(\matu_{k,2}\matR_{k,2})\times_{j\neq k}\matu_j.\]
In view of~\eqref{eq: retraction-free directions}--\eqref{eq: retraction-free directions k}, searching along such $(d+1)$ directions $\{\tensV_k\}$ is able to get rid of retractions. In the light of this, we consider partial projections that are of the form in $\{\tensV_k\}$.

Given $\tensA\in\mathbb{R}^{n_1\times n_2\times\cdots\times n_d}$, the partial projections are defined as follows.
    \begin{align}
        \proj_{0}(\tensA)&:=\argmin_{\tensV_0}\left\{\|\tensV_0-\tensA\|:\tensV_0=\tensC\times_{k=1}^d\begin{bmatrix}
            \matu_k & \matu_{k,1}
        \end{bmatrix}\in\tangent_\tensX\!\tensM_{\leq\vecr}\right\}\label{eq: partial G},\\
        \proj_{k}(\tensA)&:=\argmin_{\tensV_k}\left\{\|\tensV_k-\tensA\|:\tensV_k=\tensG\times_k(\matu_{k,2}\matR_{k,2})\times_{j\neq k}\matu_j\in\tangent_\tensX\!\tensM_{\leq\vecr}\right\}.\label{eq: partial k}
    \end{align}
Since~\eqref{eq: partial G} does not enjoy a closed-form solution, we consider its approximation 
\begin{equation}
    \label{eq: partial projection P0}
    \approj_{0}(\tensA):=\tensA\times_{k=1}^d\proj_{\tilde{\matS}_{k}},
\end{equation} 
which is exactly the first term of the approximate projection $\approj_{\tangent_\tensX\!\tensM_{\leq\vecr}}(\tensA)$ in~\eqref{eq: an approximate projection} for given $\tilde{\matu}_{k,1}\in\St(r_k-\underline{r}_k,n_k)$ with $\tilde{\matu}_{k,1}^\T\matu_k^{}=0$, where $\tilde{\matS}_{k}:=[\matu_k\ \tilde{\matu}_{k,1}]$ for $k\in[d]$. It is worth noting that $\tilde{\proj}_{0}(\tensA)$ requires the parameters $\{\tilde{\matu}_{k,1}\}_{k=1}^d$ a prior.
Moreover, by fixing $\matu_{k,2}$ in~\eqref{eq: partial k} and using $[\matu_k\ \matu_{k,1}\ \matu_{k,2}]\in\mathcal{O}(n_k)$ and~\eqref{eq: orth proj onto tangent cone by fixing Uk Uk1}, $\proj_{k}(\tensA)$ is in the form of
\[\proj_{k}(\tensA)=\tensG\times_k\left(\proj_{\matu_{k,2}}\left(\tensA\times_{j\neq k}\matu_j^\T\right)_{(k)}\matG_{(k)}^\dagger\right)\times_{j\neq k}\matu_j.\]
Similarly, since $\matu_{k,2}$ is unknown, we consider substituting the projection $\proj_{\matu_{k,2}}$ by $\proj_{\matu_{k}}^\perp$ and yield an approximation
\begin{equation}
    \label{eq: partial proj Pk}
    \tilde{\proj}_{k}(\tensA)=\tensG\times_k\left(\proj_{\matu_{k}}^\perp\left(\tensA\times_{j\neq k}\matu_j^\T\right)_{(k)}\matG_{(k)}^\dagger\right)\times_{j\neq k}\matu_j,
\end{equation}
\revise{which is different from $\tilde{\tensV}_k$ for the approximate projection~\eqref{eq: an approximate projection} in Proposition~\ref{prop: approximate projection} since $\proj_{\matu_{k}}^\perp\neq\proj_{\tilde{\matS}_{k}}^\perp$.}

\revise{The partial projections in~\eqref{eq: partial projection P0} and~\eqref{eq: partial proj Pk} satisfy that} $\tilde{\proj}_{k}(\tensA)\in\tangent_{\tensX}\!\tensM_{\leq\vecr}$ and $\langle \tensA,\tilde{\proj}_{k}(\tensA)\rangle=\|\tilde{\proj}_{k}(\tensA)\|_\frob^2$, which can be proved in a similar fashion as Proposition~\ref{prop: approximate projection}. Additionally, we can prove $\ranktc(\tensX+\tilde{\proj}_{k}(\tensA))\leq\vecr$, i.e., \[\tensX+\tilde{\proj}_{k}(\tensA)\in\tensM_{\leq\vecr},\] in a similar fashion as~\eqref{eq: retraction-free directions}--\eqref{eq: retraction-free directions k}. However, this property does not necessarily hold for two different partial projections $\approj_{j}(\tensA)$ and $\tilde{\proj}_{k}(\tensA)$ with $j\neq k$, i.e., $\ranktc(\tensX+\approj_{j}(\tensA)+\tilde{\proj}_{k}(\tensA))$ can be larger than~$\vecr$.

\subsection{Algorithm and convergence results}
To sum up, we propose the partial projection operator defined by
\begin{equation}
    \label{eq: proposed partial projection}
    \hat{\proj}_{\tangent_{\tensX}\!\tensM_{\leq\vecr}}(\tensA):=\argmax_{\tensV\in\{\tilde{\proj}_{0}(\tensA),\dots,\approj_d(\tensA)\}}\|\tensV\|_\frob.
\end{equation}
By using the partial projection~\eqref{eq: proposed partial projection}, we propose a retraction-free gradient-related approximate projection method (rfGRAP) in Algorithm~\ref{alg: rfGRAP}. The iteration of the proposed rfGRAP method is
\[\tensX^{(t+1)}=\tensX^{(t)}+s^{(t)}\hat{\proj}_{\tangent_{\tensX^{(t)}}\!\tensM_{\leq\vecr}}(-\nabla f(\tensX^{(t)})),\]
where $s^{(t)}$ is also computed by Armijo backtracking line search~\eqref{eq: Armijo}. In contrast with the proposed GRAP method (Algorithm~\ref{alg: GRAP}), there is no retraction in Algorithm~\ref{alg: rfGRAP}. 

\begin{algorithm}[htbp]
    \caption{Retraction-free gradient-related approximate projection method (rfGRAP)}
    \label{alg: rfGRAP}
    \begin{algorithmic}[1]
        \REQUIRE Initial guess $\tensX^{(0)}\in\tensM_{\leq\vecr}$, \revise{$\omega\in(0,1/\sqrt{d+1})$,} backtracking parameters $\rho, a\in(0,1), {{s}_{\min}}>0$.
        \WHILE{the stopping criteria are not satisfied}
        \revise{
            \STATE Compute $g^{(t)}=\hat{\proj}_{\tangent_{\tensX^{(t)}}\!\tensM_{\leq\vecr}}(-\nabla f(\tensX^{(t)}))$ by~\eqref{eq: proposed partial projection} with random $\tilde{\matu}_{1,1},\tilde{\matu}_{2,1},\dots,\tilde{\matu}_{d,1}$ until the angle condition~\eqref{eq: angle condition} is satisfied.}
        \STATE Choose stepsize $s^{(t)}$ by Armijo backtracking line search~\eqref{eq: Armijo}.
        \STATE Update $\tensX^{(t+1)}=\tensX^{(t)}+s^{(t)}g^{(t)}$ \revise{and $t=t+1$}.
        \ENDWHILE
        \ENSURE $\tensX^{(t)}$
    \end{algorithmic}
\end{algorithm}

\revise{Similar to Algorithm~\ref{alg: GRAP}, if $g^{(t)}=\hat{\proj}_{\tangent_{\tensX^{(t)}}\!\tensM_{\leq\vecr}}(-\nabla f(\tensX^{(t)}))$ does not satisfy the angle condition~\eqref{eq: angle condition}, we repeat the projection with other $\tilde{\matu}_{k,1}$ until the angle condition is satisfied. Since the partial projection~\eqref{eq: proposed partial projection} adopts partial information of the tangent cone, we choose the parameter $\omega\in(0,1/\sqrt{d+1})$. Let $\tilde{\matu}_{k,1}$ in~\eqref{eq: partial projection P0} be the global minimizer $\matu_{k,1}^*$ of~\eqref{eq: reformulation of orth proj}, it follows from~\eqref{eq: partial projection P0}--\eqref{eq: proposed partial projection} that 
\begin{equation*}
    \begin{aligned}
        &~~~\|\hat{\proj}_{\tangent_{\tensX}\!\tensM_{\leq\vecr}}(\tensA)\|_\frob^2=\max_{k=0,1,\dots,d}\{\|\approj_k(\tensA)\|_\frob^2\}\geq\frac{1}{d+1}\sum_{k=0}^d\|\approj_k(\tensA)\|_\frob^2\\
        &=\frac{1}{d+1}\big(\| \tensA\times_{k=1}^d\proj_{{\matS}^*_{k}}\|_\frob^2+\sum_{k=1}^d\|\tensG\times_k(\proj_{\matu_{k}}^\perp\!( \tensA\times_{j\neq k}\matu_j^\T)_{(k)}\matG_{(k)}^\dagger)\times_{j\neq k}\matu_j\|_\frob^2\big)\\
        &\geq\frac{1}{d+1}\big(\| \tensA\times_{k=1}^d\proj_{{\matS}^*_{k}}\|_\frob^2+\sum_{k=1}^d\|\tensG\times_k(\proj_{{\matS}^*_{k}}^\perp(\tensA\times_{j\neq k}\matu_j^\T)_{(k)}\matG_{(k)}^\dagger)\times_{j\neq k}\matu_j\|_\frob^2\big)\\
        &=\frac{1}{d+1}\|\proj_{\tangent_\tensX\!\tensM_{\leq\vecr}}(\tensA)\|_\frob^2,
    \end{aligned}
\end{equation*}
where $\matS^*_k=[\matu_k\ \matu_{k,1}^*]$ and $\Span(\matS^*_k)^\perp\subseteq\Span(\matu_k)^\perp$. Therefore, we obtain that 
\begin{equation*}
    \begin{aligned}
        \langle \tensA,\hat{\proj}_{\tangent_{\tensX}\!\tensM_{\leq\vecr}}(\tensA)\rangle&=\|\hat{\proj}_{\tangent_{\tensX}\!\tensM_{\leq\vecr}}(\tensA)\|_\frob^2\geq\frac{1}{\sqrt{d+1}}\|\proj_{\tangent_\tensX\!\tensM_{\leq\vecr}}(\tensA)\|_\frob\|\hat{\proj}_{\tangent_{\tensX}\!\tensM_{\leq\vecr}}(\tensA)\|_\frob.
    \end{aligned}
\end{equation*}
Consequently, it is reasonable to set the parameter $\omega$ in~\eqref{eq: angle condition} of Algorithm~\ref{alg: rfGRAP} by~$\omega\in(0,1/\sqrt{d+1})$. Note that by using $\langle \tensA,\tilde{\proj}_{k}(\tensA)\rangle=\|\tilde{\proj}_{k}(\tensA)\|_\frob^2$ for $k=0,1,\dots,d$, we obtain that $\|\hat{\proj}_{\tangent_{\tensX}\!\tensM_{\leq\vecr}}(\tensA)\|_\frob\leq\|\proj_{\tangent_{\tensX}\!\tensM_{\leq\vecr}}(\tensA)\|_\frob$ similar to~\eqref{eq: tildePA and PA}.
}

The global and local convergence of the rfGRAP method can be proved in a similar fashion as Theorem~\ref{thm: GRAP global} and Theorem~\ref{thm: GRAP local}.
\begin{theorem}\label{thm: rfGRAP convergence}
    Let $\{\tensX^{(t)}\}_{t\geq 0}$ be an infinite sequence generated by Algorithm~\ref{alg: rfGRAP}. Assume $f$ is bounded below \revise{by $f^*$} and satisfies the \L{}ojasiewicz gradient inequality. It holds that 
    \[\lim_{t\to\infty} \|\proj_{\tangent_{\tensX^{(t)}}\!\tensM_{\leq\vecr}}(-\nabla f(\tensX^{(t)}))\|_\frob=0.\]
    The method returns $\tensX^{(t)}\in\tensM_{\leq\vecr}$ satisfying $\|\proj_{\tangent_{\tensX^{(t)}}\!\tensM_{\leq\vecr}}(-\nabla f(\tensX^{(t)}))\|_\frob<\epsilon$ after $\left\lceil{f(\tensX^{(0)})}/{({s}_{\min}a\,\omega^2\epsilon^2)}\right\rceil$ iterations at most.
    \revise{
    If $\{\tensX^{(t)}\}_{t\geq 0}$ has an accumulation point $\tensX^*$, then $\tensX^{(t)}$ converges to $\tensX^*$. Furthermore, if $\ranktc(\tensX^*)=\vecr$, then the stationary measure $\|\proj_{\tangent_{\tensX^*}\!\tensM_{\leq\vecr}}(-\nabla f(\tensX^*))\|_\frob=\|\grad f(\tensX^*)\|_\frob=0$} and 
    \[\|\tensX^{(t)}-\tensX^*\|_\frob\leq C
    \left\{
        \begin{array}{ll}
            \mathrm{e}^{-ct},&\ \text{if}\ \theta=\frac{1}{2},\\
        t^{-\frac{\theta}{1-2\theta}},&\ \text{if}\ 0<\theta<\frac{1}{2}
        \end{array}
    \right.\]
    \revise{hold for some $C,c>0$.}
\end{theorem}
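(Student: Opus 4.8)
The plan is to mirror the proofs of Theorem~\ref{thm: GRAP global} and Theorem~\ref{thm: GRAP local}, exploiting three structural properties of the partial projection operator $\hat{\proj}_{\tangent_{\tensX}\!\tensM_{\leq\vecr}}$ defined in~\eqref{eq: proposed partial projection}. First, each candidate $\tilde{\proj}_k(\tensA)$ lies in $\tangent_\tensX\!\tensM_{\leq\vecr}$ and satisfies $\langle \tensA, \tilde{\proj}_k(\tensA)\rangle = \|\tilde{\proj}_k(\tensA)\|_\frob^2$ (provable exactly as in Proposition~\ref{prop: approximate projection}), so the selected direction $g^{(t)} = \hat{\proj}_{\tangent_{\tensX^{(t)}}\!\tensM_{\leq\vecr}}(-\nabla f(\tensX^{(t)}))$ obeys $\langle -\nabla f(\tensX^{(t)}), g^{(t)}\rangle = \|g^{(t)}\|_\frob^2$. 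Second, since the $d+1$ components $\tilde{\proj}_0(\tensA),\dots,\approj_d(\tensA)$ are mutually orthogonal and sum to the approximate projection $\approj_{\tangent_\tensX\!\tensM_{\leq\vecr}}(\tensA)$ of~\eqref{eq: an approximate projection}, the max-norm selection gives $\|g^{(t)}\|_\frob = \max_k \|\tilde{\proj}_k(-\nabla f(\tensX^{(t)}))\|_\frob \geq (d+1)^{-1/2}\|\approj_{\tangent_{\tensX^{(t)}}\!\tensM_{\leq\vecr}}(-\nabla f(\tensX^{(t)}))\|_\frob$. Third, for each fixed $k$ the linear operator $\tilde{\proj}_k$ keeps $\tensX^{(t)} + s^{(t)}\tilde{\proj}_k(-\nabla f(\tensX^{(t)}))$ in $\tensM_{\leq\vecr}$ by the argument behind~\eqref{eq: retraction-free directions}--\eqref{eq: retraction-free directions k}, so the retraction-free update $\tensX^{(t+1)} = \tensX^{(t)} + s^{(t)} g^{(t)}$ is feasible without any HOSVD projection.

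For the global convergence and the complexity bound I would proceed exactly as in Theorem~\ref{thm: GRAP global}. Combining the Armijo rule~\eqref{eq: Armijo}, the identity $\langle -\nabla f(\tensX^{(t)}), g^{(t)}\rangle = \|g^{(t)}\|_\frob^2$, the lower bound $s^{(t)} > s_{\min}$, and the angle condition~\eqref{eq: angle condition} (which for $g^{(t)}$ reduces to $\|g^{(t)}\|_\frob \geq \omega\,\|\proj_{\tangent_{\tensX^{(t)}}\!\tensM_{\leq\vecr}}(-\nabla f(\tensX^{(t)}))\|_\frob$), one obtains the sufficient decrease
\[
f(\tensX^{(t)}) - f(\tensX^{(t+1)}) \geq s_{\min}\,a\,\omega^2\,\|\proj_{\tangent_{\tensX^{(t)}}\!\tensM_{\leq\vecr}}(-\nabla f(\tensX^{(t)}))\|_\frob^2 .
\]
Telescoping over $t$ and using that $f$ is bounded below by $f^*$ forces the stationary measure to vanish and yields the $\lceil f(\tensX^{(0)})/(s_{\min} a \omega^2 \epsilon^2)\rceil$ iteration bound verbatim; only boundedness below is needed here.

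For the local convergence I would invoke the framework of \cite[Theorem 2.3, Corollary 2.11]{schneider2015convergence}, so it suffices to reestablish the three claims used in Theorem~\ref{thm: GRAP local}. The decisive simplification is that the retraction-free update gives the \emph{exact} identity $\|\tensX^{(t+1)} - \tensX^{(t)}\|_\frob = s^{(t)}\|g^{(t)}\|_\frob$, replacing the retraction estimate of Corollary~\ref{coro: bound of retraction} used in the GRAP proof. Consequently the first claim follows immediately with constant $c = a\omega$, since
\[
f(\tensX^{(t)}) - f(\tensX^{(t+1)}) \geq s^{(t)} a \|g^{(t)}\|_\frob^2 \geq a\omega\,\|\proj_{\tangent_{\tensX^{(t)}}\!\tensM_{\leq\vecr}}(-\nabla f(\tensX^{(t)}))\|_\frob\,\|\tensX^{(t)} - \tensX^{(t+1)}\|_\frob .
\]
The second claim, that a vanishing stationary measure forces $\tensX^{(t+1)} = \tensX^{(t)}$, follows as before: on the fixed-rank stratum it gives $g^{(t)} = 0$, and on the rank-deficient stratum Corollary~\ref{coro: stationary} forces $\nabla f(\tensX^{(t)}) = 0$, hence $g^{(t)} = 0$. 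The third claim is unchanged, relying only on the openness of $\tensM_\vecr$ in $\tensM_{\leq\vecr}$, and the two convergence rates then transfer directly through the \L{}ojasiewicz exponent $\theta$.

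The main obstacle is the feasibility of the \emph{restart} branch, where $g^{(t)} = -\nabla f(\tensX^{(t)})$: unlike the partial-projection directions, the full antigradient need not satisfy $\tensX^{(t)} + s^{(t)} g^{(t)} \in \tensM_{\leq\vecr}$, so the purely additive update can leave the variety. I would resolve this by noting that when $\ranktc(\tensX^{(t)}) = \vecr$ the tangent cone is the tangent space, the approximate projection coincides with $\grad f(\tensX^{(t)})$, and $\hat{\proj}$ selects the largest of its $d+1$ orthogonal components; hence the angle condition holds automatically for any $\omega \leq (d+1)^{-1/2}$, the restart is never triggered, and the iterate remains on $\tensM_\vecr$ by the third claim. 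In the rank-deficient regime, where a restart could in principle fire, I would either re-install the HOSVD retraction on that branch and recover the displacement bound $\|\tensX^{(t+1)} - \tensX^{(t)}\|_\frob \leq (\sqrt{d}+1)\|s^{(t)} g^{(t)}\|_\frob$ through the quasi-optimality~\eqref{eq: quasi-optimal HOSVD}, exactly as in scenario (ii) of Theorem~\ref{thm: GRAP local}, or argue via Corollary~\ref{coro: stationary} that near such points the relevant directions collapse. Pinning down this branch cleanly is the only part that does not transcribe mechanically from the GRAP analysis.
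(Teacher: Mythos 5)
Your overall route is the one the paper intends: the paper gives no separate proof of Theorem~\ref{thm: rfGRAP convergence}, only the remark that it follows ``in a similar fashion'' as Theorems~\ref{thm: GRAP global} and~\ref{thm: GRAP local}, and your transcription of the sufficient-decrease/telescoping argument and of the three claims feeding \cite[Theorem 2.3, Corollary 2.11]{schneider2015convergence} is exactly that plan. There is, however, a concrete error in your second ``structural property.'' The partial projections in~\eqref{eq: partial proj Pk} are built with $\proj_{\matu_k}^\perp$, \emph{not} with $\proj_{\tilde{\matS}_k}^\perp$ as in the $k$-th terms of the approximate projection~\eqref{eq: an approximate projection}; at a rank-deficient point the range of $\tilde{\proj}_k$ therefore overlaps $\Span(\tilde{\matu}_{k,1})\subseteq\Span(\tilde{\matS}_k)$, so the $d+1$ components are neither mutually orthogonal nor do they sum to $\approj_{\tangent_\tensX\!\tensM_{\leq\vecr}}(\tensA)$. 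The matrix case already shows this: $\langle\approj_{0}(\mata),\approj_{1}(\mata)\rangle=\langle\proj_{[\matu\ \matu_1]}\mata\proj_{[\matv\ \matv_1]},\,\proj_\matu^\perp\mata\proj_\matv\rangle=\|\matu_1^\T\mata\matv\|_\frob^2$, which is positive for generic $\mata$. Consequently your inequality $\|g^{(t)}\|_\frob\geq(d+1)^{-1/2}\|\approj_{\tangent_{\tensX^{(t)}}\!\tensM_{\leq\vecr}}(-\nabla f(\tensX^{(t)}))\|_\frob$ is unsupported whenever $\ranktc(\tensX^{(t)})<\vecr$. This does not contaminate your global argument, which only invokes the angle condition~\eqref{eq: angle condition} as enforced by Algorithm~\ref{alg: rfGRAP}; and on the full-rank stratum the matrices $\tilde{\matu}_{k,1}$ are vacuous, the components genuinely are mutually orthogonal and sum to the exact tangent-space projection~\eqref{eq: orth proj onto tangent space Tucker}, so your conclusion that the restart never fires on $\tensM_{\vecr}$ does stand --- but you must confine the orthogonality claim to that stratum and flag the resulting restriction $\omega\leq(d+1)^{-1/2}$ (in GRAP the full-rank direction is the entire Riemannian antigradient, so $\omega=1$ is admissible; here it is only the largest of $d+1$ pieces).

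The restart obstacle you single out is genuine, and it is a defect of the paper's own Algorithm~\ref{alg: rfGRAP} that the one-line proof sketch glosses over: when the angle condition fails at a rank-deficient iterate, $g^{(t)}=-\nabla f(\tensX^{(t)})$ need not be tangent and the purely additive update can leave $\tensM_{\leq\vecr}$. Your first remedy is the correct reading: the Armijo rule~\eqref{eq: Armijo} is already phrased in terms of $f(\proj_{\leq\vecr}^{\mathrm{HO}}(\tensX^{(t)}+s^{(t)}g^{(t)}))$, so the coherent interpretation is that the restart branch sets $\tensX^{(t+1)}=\proj_{\leq\vecr}^{\mathrm{HO}}(\tensX^{(t)}+s^{(t)}g^{(t)})$, after which scenario (ii) of the proof of Theorem~\ref{thm: GRAP local} --- the $(\sqrt{d}+1)$ displacement bound via~\eqref{eq: quasi-optimal HOSVD} --- transcribes verbatim. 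Your alternative suggestion, that ``near such points the relevant directions collapse,'' is not an argument and should be dropped: Corollary~\ref{coro: stationary} applies only when the stationarity measure is exactly zero, and Example~\ref{eg: apocalypse} shows that along an approach to a rank-deficient point the projected and full gradients can behave in opposite ways.
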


\subsection{Connection to matrix varieties}
We investigate the connection between the proposed rfGRAP method and other existing methods in the matrix case. 

Specifically, given a matrix $\mata\in\mathbb{R}^{m\times n}$ and the SVD $\matx=\matu\Sigma\matv^\T$ of $\matx\in\mathbb{R}^{m\times n}_{\underline{r}}$, the proposed partial projections $\{\approj_{k}(\mata)\}_{k=0}^d$ in~\eqref{eq: partial projection P0}--\eqref{eq: partial proj Pk} boils down to 
\begin{equation*}
    \begin{aligned}
        \approj_{0}(\mata)&=\proj_{\begin{bmatrix}
                \matu & {\matu}_{1}
            \end{bmatrix}}\!\mata\!\proj_{\begin{bmatrix}
                \matv & {\matv}_{1}
            \end{bmatrix}},\\
            \approj_{1}(\mata)&=\proj_\matu^\perp\!\mata\!\proj_\matv,\\
            \approj_{2}(\mata)&=\proj_\matu\!\mata\!\proj_\matv^\perp
        \end{aligned}
    \end{equation*}
when $d=2$, where ${\matu}_1\in\St(r-\underline{r},m),{\matv}_1\in\St(r-\underline{r},n)$, i.e., $\tilde{\matu}_{1,1}$ and $\tilde{\matu}_{2,1}$ in~\eqref{eq: partial projection P0}, are selected by the leading $(r-\underline{r})$ left and right singular vectors of $\proj_\matu^\perp\!\mata\!\proj_\matv^\perp$. Given $\matu_2\in\St(m-r,m)$ with $[\matu\ {\matu}_{1}\ \matu_2]\in\mathcal{O}(m)$ and $\matv_2\in\St(n-r,n)$ with $[\matv\ {\matv}_{1}\ \matv_2]\in\mathcal{O}(n)$. Subsequently, the partial projections can be illustrated by 
\begin{equation*}
    \begin{aligned}
        \approj_{0}(\mata)&=
        \begin{bmatrix}
            \matu & \matu_1 & \matu_2
        \end{bmatrix}
        \vcenter{\hbox{\includegraphics[scale=1.2]{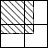}}}
        \begin{bmatrix}
            \matv & \matv_1 & \matv_2
        \end{bmatrix}^\T,\\
        \approj_{1}(\mata)&=
        \begin{bmatrix}
            \matu & \matu_1 & \matu_2
        \end{bmatrix}
        \vcenter{\hbox{\includegraphics[scale=1.2]{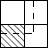}}}
        \begin{bmatrix}
            \matv & \matv_1 & \matv_2
        \end{bmatrix}^\T,\\
        \approj_{2}(\mata)&=
        \begin{bmatrix}
            \matu & \matu_1 & \matu_2
        \end{bmatrix}
        \vcenter{\hbox{\includegraphics[scale=1.2]{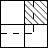}}}
        \begin{bmatrix}
            \matv & \matv_1 & \matv_2
        \end{bmatrix}^\T
    \end{aligned}
\end{equation*}
in the sense of Fig.~\ref{fig: new representation of matrix tangent cone}, which is different from the ``partial projections" 
\begin{equation*}
    \begin{aligned}
        \breve{\proj}_{1}(\mata)&=
        \begin{bmatrix}
            \matu & \matu_1 & \matu_2
        \end{bmatrix}
        \vcenter{\hbox{\includegraphics[scale=1.2]{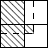}}}
        \begin{bmatrix}
            \matv & \matv_1 & \matv_2
        \end{bmatrix}^\T,\\ 
        \breve{\proj}_{2}(\mata)&=
        \begin{bmatrix}
            \matu & \matu_1 & \matu_2
        \end{bmatrix}
        \vcenter{\hbox{\includegraphics[scale=1.2]{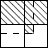}}}
        \begin{bmatrix}
            \matv & \matv_1 & \matv_2
        \end{bmatrix}^\T
    \end{aligned}
\end{equation*}
in~\cite[\S 3.4]{schneider2015convergence}. Therefore, the proposed partial projection $\hat{\proj}_{\tangent_{\matx}\!\mathbb{R}^{m\times n}_{\leq r}}$ is also able to serve as a new retraction-free search direction in optimization on matrix varieties.

\section{A Tucker rank-adaptive method} \label{sec: TRAM}
In practice, choosing an appropriate rank parameter $\vecr$ appears to be a challenging task. A larger $\vecr$ can expand the search space and potentially yield a better optimum while simultaneously increasing computational costs. In addition, as shown in Example~\ref{eg: apocalypse}, even though a sequence generated by GRAP satisfies that $\|\proj_{\tangent_{\tensX^{(t)}}\!\tensM_{\leq\vecr}}(-\nabla f(\tensX^{(t)}))\|_\frob$ converges to $0$, the rank deficiency at an accumulation point can hamper it to be a stationary point. A similar challenge arises in Riemannian optimization methods applied on the fixed-rank Tucker tensor manifold $\tensM_{\vecr}$; see, e.g.,~\cite[Section 5.1]{dong2022new}. Therefore, in this section, we are motivated to propose a rank-adaptive method to adjust the rank of an iterate $\tensX^{(t)}=\tensG^{(t)}\times_{k=1}^d\matu_k^{(t)}$ by designing new rank-adaptive strategies with the search directions in
\begin{equation}
    \label{eq: search in TRAM}
    \tangent_{\tensX^{(t)}}\!\tensM_{\underline{\vecr}^{(t)}}+\normal_{\leq\vecl^{(t)}}(\tensX^{(t)})\subseteq\tangent_{\tensX^{(t)}}\!\tensM_{\leq\vecr},
\end{equation}
where $\normal_{\leq\vecl^{(t)}}(\tensX^{(t)}):=\tensM_{\leq\vecl^{(t)}}\cap\left(\bigotimes_{k=1}^d\Span(\matu_k^{(t)})^\perp\right)\subseteq\normal_{\tensX^{(t)}}\!\tensM_{\underline{\vecr}^{(t)}}$ and $\vecl^{(t)}\in\mathbb{N}_+^d$ satisfies $\vecl^{(t)}\leq\vecr-\underline{\vecr}^{(t)}$.

Generally speaking, we first apply Riemannian optimization on $\tensM_{\underline{\vecr}^{(t)}}$ in section~\ref{subsec: fixed rank}. Then, rank-decreasing and rank-increasing procedures are developed to automatically adjust the rank of an iterate $\tensX^{(t)}$ in sections~\ref{subsec: rank decrease} and~\ref{subsec: rank increase}. In summary, a new Tucker rank-adaptive method (TRAM) is proposed and analyzed in sections~\ref{subsec: TRAM} and~\ref{subsec: TRAM convergence}. The implementation details of TRAM are provided in section~\ref{subsec: practical TRAM}.

\subsection{Line search on fixed-rank manifold}\label{subsec: fixed rank}
Given a point $\tilde{\tensX}^{(t)}\in\tensM_{\underline{\vecr}^{(t)}}$, we observe from~\eqref{eq: Tucker tangent cone} that $\tangent_{\tilde{\tensX}^{(t)}}\!\tensM_{\underline{\vecr}^{(t)}}\subseteq\tangent_{\tilde{\tensX}^{(t)}}\!\tensM_{\leq\vecr}$. Therefore, the negative Riemannian gradient of $f$ at $\tilde{\tensX}^{(t)}$ on $\tensM_{\underline{\vecr}^{(t)}}$ provides a convincing search direction that enjoys a closed-form expression~\eqref{eq: orth proj onto tangent space Tucker}. The Riemannian gradient descent method (RGD) on $\tensM_{\underline{\vecr}^{(t)}}$ is shown in Algorithm~\ref{alg: RGD}. 

\begin{algorithm}[htbp]
    \caption{Riemannian gradient descent method (RGD) on $\tensM_{\underline{\vecr}^{(t)}}$}
    \label{alg: RGD}
    \begin{algorithmic}[1]
        \REQUIRE Initial guess $\tensY^{(0)}=\tilde{\tensX}^{(t)}\in\tensM_{\underline{\vecr}^{(t)}}$; backtracking parameters $\rho, a\in(0,1), {{s}_{\min}}>0$.
        \WHILE{the stopping criteria are not satisfied}
        \STATE Compute $g^{(i)}=-\grad f(\tensY^{(i)})=\proj_{\tangent_{\tensY^{(i)}}\!\tensM_{\underline{\vecr}^{(t)}}}(-\nabla f(\tensY^{(i)}))$ by~\eqref{eq: orth proj onto tangent space Tucker}.
        \STATE Choose stepsize $s^{(i)}$ by Armijo backtracking line search~\eqref{eq: Armijo}.
        \STATE Update $\tensY^{(i+1)}=\proj_{\underline{\vecr}^{(t)}}^{\mathrm{HO}}\left(\tensY^{(i)}+s^{(i)}g^{(i)}\right)$ \revise{and $i=i+1$}.
        \ENDWHILE
        \ENSURE The last iterate ${\tensX}^{(t)}=\tensY^{(i)}$.
    \end{algorithmic}
\end{algorithm}

Let $\{\tensY^{(i)}\}_{i\geq 0}$ be the sequence generated by RGD with $\tensY^{(0)}=\tilde{\tensX}^{(t)}$. The RGD method updates $\tensY^{(i)}$ by
\[\tensY^{(i+1)}=\proj_{\underline{\vecr}^{(t)}}^{\mathrm{HO}}\left(\tensY^{(i)}-s^{(i)}\grad f(\tensY^{(i)})\right).\] 
For the selection of stepsize, we apply the Armijo backtracking line search~\eqref{eq: Armijo} to ensure the convergence. The algorithm terminates if: 
1) rank deficiency is detected, namely, at least one of the mode-$k$ unfolding matrices of $\tensY$ satisfies $\sigma_{\min}(\maty_{(k)}^{(i)})/\sigma_{\max}(\maty_{(k)}^{(i)})\leq\Delta$; 2) the Riemannian gradient satisfies $\|\grad f(\tensY^{(i)})\|_\frob\leq\varepsilon_R^{(t)}$ with threshold $\varepsilon_R^{(t)}>0$. Note that we always check the rank deficiency in prior to the stationarity.

\subsection{Rank-decreasing procedure}\label{subsec: rank decrease}
Given ${\tensX}^{(t)}={\tensG}^{(t)}\times_{k=1}^d{\matu}_k^{(t)}$ returned by Algorithm~\ref{alg: RGD}, if the RGD method terminates upon detecting rank deficiency, we proceed by implementing a rank-decreasing procedure, which is able to reduce the number of parameters and thus save storage. Specifically, we produce a rank-$\hat{\vecr}$ truncation of ${\tensX}^{(t)}$ with $$\hat{r}_k:=\min\{i:\revise{\sigma_{i+1,k}}<\Delta\sigma_{1,k}\}\qquad \text{or}\qquad \hat{r}_k:=\underline{r}_k^{(t)}\ \ \text{if}\ \ \sigma_{\underline{r}_k^{(t)},k}\geq\Delta\sigma_{1,k},$$ 
where $\sigma_{1,k}\geq\cdots\geq\sigma_{\underline{r}_k^{(t)},k}$ are the singular values of ${\matx}_{(k)}^{(t)}$ and $\Delta\in(0,1)$ is a threshold. Subsequently, we yield a truncated low-rank tensor $\proj_{\leq\hat{\vecr}}^{\mathrm{HO}}({\tensX}^{(t)})$. To ensure the convergence, we adaptively shrink $\Delta$ by $\rho_1\Delta$ with $\rho_1\in(0,1)$ until it holds that $f({\tensX}^{(t)})\geq f(\proj_{\leq\hat{\vecr}}^{\mathrm{HO}}({\tensX}^{(t)}))$. Then, we set 
\[\tilde{\tensX}^{(t+1)}=\proj_{\leq\hat{\vecr}}^{\mathrm{HO}}({\tensX}^{(t)})\in\tensM_{\leq\vecr
}\ \ \text{with}\ \ \ranktc(\tilde{\tensX}^{(t+1)})\leq\underline{\vecr}^{(t)}.\]

Figure~\ref{fig: rank decreasing} depicts the rank-decreasing procedure for $d=3$. The detailed rank-decreasing procedure is illustrated in Algorithm~\ref{alg: rank decrease}.

\begin{figure}[htbp]
    \centering
    \includegraphics[width=\textwidth]{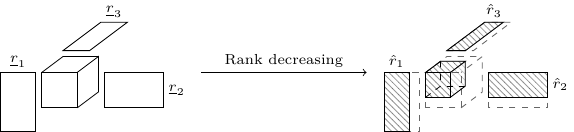}
    \caption{Illustration of rank-decreasing procedure for $d=3$}
    \label{fig: rank decreasing}
\end{figure}

\begin{algorithm}
    \caption{Rank-decreasing procedure}
    \label{alg: rank decrease}
    \begin{algorithmic}[1]
        \REQUIRE ${\tensX}^{(t)}={\tensG}^{(t)}\times_{k=1}^d{\matu}_k^{(t)}$, $\Delta>0$, $\rho_1\in(0,1)$.
        \FOR{$k=1,\dots,d$}
            \STATE Compute the singular values $\sigma_{1,k}\geq\cdots\geq\sigma_{\underline{r}_k^{(t)},k}>0$ of ${\matG}_{(k)}^{(t)}$. 
        \ENDFOR
        \REPEAT
            \STATE Find $\hat{r}_k=\min\{i:\revise{\sigma_{i+1,k}}<\Delta\sigma_{\revise{1,k}}\}$ for $k=1,\dots,d$.
            \STATE Compute $\hat{\tensX}=\proj_{\leq\hat{\vecr}}^{\mathrm{HO}}({\tensX}^{(t)})$.
            \STATE Set $\Delta=\rho_1\Delta$.
        \UNTIL{$f(\hat{\tensX})\leq f({\tensX}^{(t)})$}
        \ENSURE $\tilde{\tensX}^{(t+1)}=\hat{\tensX}$ and Tucker rank $\underline{\vecr}^{(t+1)}=\hat{\vecr}$.
    \end{algorithmic}
\end{algorithm}

\subsection{Rank-increasing procedure}\label{subsec: rank increase}
Given ${\tensX}^{(t)}={\tensG}^{(t)}\times_{k=1}^d{\matu}_k^{(t)}$ returned by Algorithm~\ref{alg: RGD} with initial guess $\tilde{\tensX}^{(t)}$, if ${\tensX}^{(t)}$ is an $\varepsilon_R^{(t)}$-stationary point and $\underline{\vecr}^{(t)}<\vecr$, it is reasonable to consider increasing the rank of ${\tensX}^{(t)}$ in pursuit of higher accuracy. As Remark~\ref{rem: 1} suggests, given a matrix $\matx\in\mathbb{R}^{m\times n}_{\underline{r}}$, adding a matrix in normal part $\normal_{\leq\ell}(\matx)$ can increase the rank of $\matx$. For Tucker tensors, similarly, we observe that
\[\underline{\vecr}^{(t)}<\ranktc({\tensX}^{(t)}+\tensN^{(t)}_{\leq\vecl^{(t)}})\leq\vecr\]
holds for all $\tensN_{\leq\vecl^{(t)}}^{(t)}\in\normal_{\leq\vecl^{(t)}}({\tensX}^{(t)})$ defined in~\eqref{eq: search in TRAM} and $0<\vecl^{(t)}\leq\vecr-\underline{\vecr}^{(t)}$. Therefore, we can implement line search along $\tensN_{\leq\vecl^{(t)}}^{(t)}$ with $\langle\tensN_{\leq\vecl^{(t)}}^{(t)},-\nabla f({\tensX}^{(t)})\rangle\geq 0$ to increase the rank of ${\tensX}^{(t)}$ and decrease the function value at the same time. Specifically, for any ${\matu}_{k,1}^{(t)}\in\St(\ell_k^{(t)},n_k)$ with $({\matu}_{k,1}^{(t)})^\T{\matu}_{k}^{(t)}=0$, the direction 
\[\tensN_{\leq\vecl^{(t)}}^{(t)}:=-\nabla f({\tensX}^{(t)})\times_{k=1}^d\proj_{{\matu}_{k,1}^{(t)}}\in\normal_{\leq\vecl^{(t)}}({\tensX}^{(t)})\]
is always a descent direction. To ensure the convergence, we apply the Armijo backtracking line search~\eqref{eq: Armijo}. 
Subsequently, we yield a new tensor 
\[\tilde{\tensX}^{(t+1)}={\tensX}^{(t)}+s\tensN^{(t)}_{\leq\vecl^{(t)}}\in\tensM_{\leq\vecr}\ \ \text{with}\ \ \ranktc(\tilde{\tensX}^{(t+1)})>\underline{\vecr}^{(t)}.\]
In the sense of tensor space, the rank-increasing procedure updates the tensor ${\tensX}^{(t)}$ to the tensor $\tilde{\tensX}^{(t+1)}$ in a larger tensor space,
\[\bigotimes_{k=1}^d\Span({\matu}_k^{(t)}) \longrightarrow \bigotimes_{k=1}^d\left(\Span({\matu}_k^{(t)})+\Span({\matu}_{k,1}^{(t)})\right).\]

A geometric illustration of the rank-increasing procedure for $d=3$ is depicted in Fig.~\ref{fig: rank increasing}. Algorithm~\ref{alg: rank increase} summarizes the proposed rank-increasing procedure.

\begin{figure}[htbp]
    \centering
    \includegraphics[width=\textwidth]{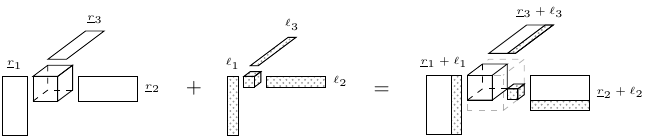}
    \caption{Illustration of rank-increasing procedure for $d=3$}
    \label{fig: rank increasing}
\end{figure}

\begin{algorithm}
    \caption{Rank-increasing procedure}
    \label{alg: rank increase}
    \begin{algorithmic}[1]
        \REQUIRE ${\tensX}^{(t)}={\tensG}^{(t)}\times_{k=1}^d{\matu}_k^{(t)}$, $\vecl^{(t)}$. 
        \STATE Select ${\matu}_{1,1}^{(t)},{\matu}_{2,1}^{(t)},\dots,{\matu}_{d,1}^{(t)}$ \revise{with ${\matu}_{k,1}^{(t)}\in\St(\ell_k^{(t)},n_k)$ and $({\matu}_{k,1}^{(t)})^\T{\matu}_{k}^{(t)}=0$ randomly}.
        \STATE Compute $\hat{\tensG}^{(t)}=-\nabla f({\tensX}^{(t)})\times_{k=1}^d({\matu}_{k,1}^{(t)})^\T$.
        \STATE Choose stepsize $s$ by Armijo backtracking line search~\eqref{eq: Armijo}.
        \STATE Merge the cores $\bar{\tensG}^{(t)}=\diag({\tensG}^{(t)},s\hat{\tensG}^{(t)})$, and the factor matrices $\bar{\matu}_k^{(t)}=\begin{bmatrix}
            {\matu}_k^{(t)} & {\matu}_{k,1}^{(t)}
        \end{bmatrix}$.
        \ENSURE Rank-increased tensor $\tilde{\tensX}^{(t+1)}=\bar{\tensG}^{(t)}\times_{k=1}^d\bar{\matu}_k^{(t)}$ with Tucker rank $\underline{\vecr}^{(t)}+\vecl^{(t)}$.
    \end{algorithmic}
\end{algorithm}

\subsection{Tucker rank-adaptive method}\label{subsec: TRAM}
We propose the Tucker rank-adaptive method (TRAM) to solve the optimization problem~\eqref{eq: problem (P)}, as listed in Algorithm~\ref{alg: TRAM}. 

\begin{algorithm}[htbp]
    \caption{Tucker rank-adaptive method for~\eqref{eq: problem (P)} (TRAM)}
    \label{alg: TRAM}
    \begin{algorithmic}[1]
        \REQUIRE Initial guess $\tensX^{(0)}=\tilde{\tensX}^{(0)}\in\tensM_{\leq\vecr}$ with $\ranktc({\tensX}^{(0)})=\ranktc(\tilde{\tensX}^{(0)})=\underline{\vecr}^{(0)}$; parameters $\varepsilon_R^{(0)}>0$, $\rho_R\in(0,1)$; rank-decreasing parameters $\Delta>0$, $\rho_1\in(0,1)$; rank-increasing parameter $\{\vecl^{(t)}\}_{t\geq 0}$; backtracking parameters $\rho, a\in(0,1), {{s}_{\min}}>0$.
        \WHILE{the stopping criteria are not satisfied}
        \STATE Compute ${\tensX}^{(t)}$ by Algorithm~\ref{alg: RGD} with initial guess $\tilde{\tensX}^{(t)}$ and threshold $\varepsilon_R^{(t)}$. Obtain the Riemannian gradient ${\tensT}^{(t)}=\grad f({\tensX}^{(t)})$. 
        \IF{Algorithm~\ref{alg: RGD} is terminated by detecting rank deficiency}
            \STATE Apply rank-decreasing procedure (Algorithm~\ref{alg: rank decrease}) to ${\tensX}^{(t)}$ and yield $\tilde{\tensX}^{(t+1)}$.
            \IF{$\ranktc(\tilde{\tensX}^{(t+1)})=\underline{\vecr}^{(t)}$}
                \STATE Break.
            \ENDIF
        \ELSE 
            \IF{$\underline{\vecr}^{(t)}=\vecr$}
                \STATE Set $\tilde{\tensX}^{(t+1)}={\tensX}^{(t)}$ and $\varepsilon_R^{(t+1)}=\rho_R^{}\varepsilon_R^{(t)}$.\label{line: 7}
            \ELSE
                \STATE Compute $\tensN_{\leq\vecl^{(t)}}^{(t)}=\hat{\tensG}^{(t)}\times_{k=1}^d{\matu}_{k,1}^{(t)}$ by lines 1--2 in Algorithm~\ref{alg: rank increase}.
                \IF{$\|\tensN_{\leq\vecl^{(t)}}^{(t)}\|_\frob\geq\varepsilon_1\|{\tensT}^{(t)}\|_\frob$ and $\varepsilon_2\|\nabla f({\tensX}^{(t)})\|_\frob\leq\|\tensT^{(t)}\|_\frob$} 
                    \STATE Apply rank-increasing procedure (Algorithm~\ref{alg: rank increase}) and yield $\tilde{\tensX}^{(t+1)}$. \label{line: 11}
                \ELSIF{$\varepsilon_2\|\nabla f({\tensX}^{(t)})\|_\frob>\|\tensT^{(t)}\|_\frob$} 
                    \STATE Update $\tilde{\tensX}^{(t+1)}=\proj_{\leq\vecr}^{\mathrm{HO}}({\tensX}^{(t)}+s^{(t)}\approj_{\tangent_{\tensX^{(t)}}\!\tensM_{\leq\vecr}}(-\nabla f({\tensX}^{(t)})))$ by lines~\ref{alg: GRAP restart}--\ref{line: 4 in GRAP} in Algorithm~\ref{alg: GRAP}.\label{line: 13}
                \ELSE
                    \STATE Set $\tilde{\tensX}^{(t+1)}={\tensX}^{(t)}$ and $\varepsilon_R^{(t+1)}=\rho_R^{}\varepsilon_R^{(t)}$. \label{line: 15}
                \ENDIF
            \ENDIF
        \ENDIF
        \STATE $t=t+1$.
        \ENDWHILE
        \ENSURE $\tensX^{(t)}$
    \end{algorithmic}
\end{algorithm}

The method begins with the execution of Algorithm~\ref{alg: RGD} using an initial guess of $\tilde{\tensX}^{(t)}$ and returns a result ${\tensX}^{(t)}$. Depending on different properties of ${\tensX}^{(t)}$, the rank adjustment proceeds as follows. If ${\tensX}^{(t)}$ is found to be rank-deficient, then the rank-decreasing procedure in Algorithm~\ref{alg: rank decrease} is activated to prevent the potential rank degeneracy. If not, one can consider increasing the rank to improve the accuracy. To this end, we first check
\begin{equation}
    \label{eq: increase accept}
    \|\tensN_{\leq\vecl^{(t)}}^{(t)}\|_\frob\geq\varepsilon_1\|{\tensT}^{(t)}\|_\frob \quad\text{and}\quad \varepsilon_2\|\nabla f({\tensX}^{(t)})\|_\frob\leq\|\tensT^{(t)}\|_\frob
\end{equation}
with ${\tensT}^{(t)}:=\grad f({\tensX}^{(t)})$, which implies that rank increasing does work. We implement the rank-increasing procedure in Algorithm~\ref{alg: rank increase} to increase the rank. Otherwise, in view of~\eqref{eq: Tucker tangent cone} and Fig.~\ref{fig: core tensor in tangent cone}, we observe that\[\tangent_{{\tensX}^{(t)}}\!\tensM_{\underline{\vecr}^{(t)}}+\normal_{\leq\vecl^{(t)}}({\tensX}^{(t)})\subsetneq\tangent_{{\tensX}^{(t)}}\!\tensM_{\leq\vecr}.\]
Therefore, we check the restart criterion 
\begin{equation}
    \label{eq: restart criterion}
    \varepsilon_2\|\nabla f({\tensX}^{(t)})\|_\frob\geq\|{\tensT}^{(t)}\|_\frob.
\end{equation}
If the criterion holds, we resort to line search along \revise{$\approj_{\tangent_{\tensX^{(t)}}\!\tensM_{\leq\underline{\vecr}^{(t)}}}(-\nabla f({\tensX}^{(t)}))$ by lines~\ref{alg: GRAP restart}--\ref{line: 4 in GRAP} in Algorithm~\ref{alg: GRAP}}.

\begin{remark}
    In practice, one can always improve the approximation error by increasing rank since the search space is enlarged (e.g., \cite[\S 4.9]{steinlechner2016riemannian}). However, applying GRAP, rfGRAP or Riemannian conjugate gradient method with a fixed (large) rank parameter $\vecr$ can result in severe overfitting (see., e.g., section~\ref{subsec: synthetic} and~\cite[\S 4.3]{kressner2014low}). The proposed rank-increasing procedure increases the rank of an iterate only when the search direction $\tensN_{\leq\vecl^{(t)}}^{(t)}$ is dominant in the sense of~\eqref{eq: increase accept}. Additionally, the rank-increasing procedure enjoys theoretical guarantees; see Theorem~\ref{thm: TRAM}.     
\end{remark}

\subsection{Convergence results}\label{subsec: TRAM convergence}
Let $\{\tensX^{(t)}\}_{t\geq 0}$ be an infinite sequences generated by Algorithm~\ref{alg: TRAM}. Note that in view of Algorithms~\ref{alg: RGD}--\ref{alg: rank decrease},  $\tensX^{(t+1)}$ satisfies
\[f(\tensX^{(t+1)})\leq f(\tilde{\tensX}^{(t+1)})\leq f(\tensX^{(t)}),\]
i.e., $\{f(\tensX^{(t)})\}_{t\geq 0}$ is nonincreasing. Subsequently, we prove the following global convergence of TRAM. 
\begin{lemma}\label{prop: TRAM 1}
    Let $\{\tensX^{(t)}\}_{t\geq 0}$ be an infinite sequence generated by Algorithm~\ref{alg: TRAM}. Assume that $f$ is bounded below \revise{by $f^*$}. Then, it holds that 
    \[\revise{\liminf_{t\to\infty}\|\grad f(\tensX^{(t)})\|_\frob=\liminf_{t\to\infty}\|\proj_{\tangent_{{\tensX}^{(t)}}\!\tensM_{\underline{\vecr}^{(t)}}}(\nabla f({\tensX}^{(t)}))\|_\frob=0.}\]
\end{lemma}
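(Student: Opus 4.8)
The plan is to combine the monotone decrease of the objective with an exhaustive case analysis of the branch taken at each outer iteration of Algorithm~\ref{alg: TRAM}. Since $\{f(\tensX^{(t)})\}_{t\geq 0}$ is nonincreasing and bounded below by $f^*$, it converges; hence $f(\tensX^{(t)})-f(\tensX^{(t+1)})\to 0$ and the telescoping sum $\sum_{t\geq 0}\bigl(f(\tensX^{(t)})-f(\tensX^{(t+1)})\bigr)$ is finite. Throughout I write $\tensT^{(t)}=\grad f({\tensX}^{(t)})=\proj_{\tangent_{{\tensX}^{(t)}}\!\tensM_{\underline{\vecr}^{(t)}}}(\nabla f({\tensX}^{(t)}))$, so that the quantity in the statement equals $\|\tensT^{(t)}\|_\frob$; by the stopping rule of Algorithm~\ref{alg: RGD} it is controlled as $\|\tensT^{(t)}\|_\frob\leq\varepsilon_R^{(t)}$ exactly when Algorithm~\ref{alg: RGD} halts by the stationarity test rather than by detecting rank deficiency.

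First I would partition every outer iteration into five mutually exclusive and exhaustive cases according to the executed branch: (E1) line~\ref{line: 7}, where $\underline{\vecr}^{(t)}=\vecr$ and the threshold shrinks; (E2) line~\ref{line: 15}, where the threshold shrinks; (E3) the restart at line~\ref{line: 13}; (E4) the rank-increasing step at line~\ref{line: 11}; and (E5) the rank-decreasing step triggered by rank deficiency. The goal is to show that if any of (E1)--(E4) occurs infinitely often, then $\liminf_{t}\|\tensT^{(t)}\|_\frob=0$ already holds along the corresponding subsequence. For (E1) and (E2) the threshold is multiplied by $\rho_R\in(0,1)$ infinitely often, so $\varepsilon_R^{(t)}\to 0$; as both branches follow a stationary stop, $\|\tensT^{(t)}\|_\frob\leq\varepsilon_R^{(t)}\to 0$ on that subsequence. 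For (E3) the restart uses $g^{(t)}=-\nabla f({\tensX}^{(t)})$ with $s^{(t)}>{s}_{\min}$, so the Armijo rule~\eqref{eq: Armijo} together with $f(\tensX^{(t+1)})\leq f(\tilde{\tensX}^{(t+1)})$ yields $f(\tensX^{(t)})-f(\tensX^{(t+1)})\geq {s}_{\min}a\|\nabla f({\tensX}^{(t)})\|_\frob^2$; summability then forces $\|\nabla f({\tensX}^{(t)})\|_\frob\to 0$ along (E3), whence $\|\tensT^{(t)}\|_\frob\leq\|\nabla f({\tensX}^{(t)})\|_\frob\to 0$. For (E4) the direction $\tensN_{\leq\vecl^{(t)}}^{(t)}$ satisfies $\langle-\nabla f({\tensX}^{(t)}),\tensN_{\leq\vecl^{(t)}}^{(t)}\rangle=\|\tensN_{\leq\vecl^{(t)}}^{(t)}\|_\frob^2$ and, by the acceptance test~\eqref{eq: increase accept}, $\|\tensN_{\leq\vecl^{(t)}}^{(t)}\|_\frob\geq\varepsilon_1\|\tensT^{(t)}\|_\frob$; the Armijo rule then gives $f(\tensX^{(t)})-f(\tensX^{(t+1)})\geq {s}_{\min}a\varepsilon_1^2\|\tensT^{(t)}\|_\frob^2$, and summability again yields $\|\tensT^{(t)}\|_\frob\to 0$ along (E4).

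It remains to exclude the possibility that only (E5) occurs infinitely often. If (E1)--(E4) each occur only finitely many times, then beyond some index $T^*$ every iteration is a rank decrease. Because the sequence is infinite, the \textbf{Break} inside the rank-decreasing branch is never executed, so each such step strictly decreases the integer $\sum_{k=1}^d\underline{r}_k^{(t)}$, which is bounded below by $d$. This is impossible; hence at least one of (E1)--(E4) occurs infinitely often, and the previous paragraph produces a subsequence along which the stationarity measure vanishes, giving $\liminf_{t\to\infty}\|\proj_{\tangent_{{\tensX}^{(t)}}\!\tensM_{\underline{\vecr}^{(t)}}}(-\nabla f({\tensX}^{(t)}))\|_\frob=0$.

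The main obstacle is organizing the branch analysis so that the five cases are genuinely exhaustive and so that each \emph{productive} case (E1)--(E4) delivers a subsequence on which the measure $\|\tensT^{(t)}\|_\frob$ tends to zero; the decisive structural point is the last step, where the boundedness of the integer-valued Tucker rank rules out an infinite tail of pure rank decreases and thereby guarantees that a productive branch is visited infinitely often. The per-step sufficient-decrease estimates for (E3) and (E4) are routine consequences of the descent-direction identity and the Armijo condition with $s^{(t)}>{s}_{\min}$, and need only be recorded with care.
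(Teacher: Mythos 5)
Your proposal is correct and follows essentially the same route as the paper's proof: both arguments rule out an infinite tail of rank-decreasing steps via the boundedness of the integer-valued Tucker rank, dispose of the two threshold-shrinking branches by noting that $\varepsilon_R^{(t)}\to 0$ along them (and that each follows a stationarity stop, so $\|\tensT^{(t)}\|_\frob\leq\varepsilon_R^{(t)}$), and handle the rank-increasing and restart branches through the Armijo sufficient-decrease bounds $s_{\min}a\varepsilon_1^2\|\tensT^{(t)}\|_\frob^2$ and $s_{\min}a\|\nabla f(\tensX^{(t)})\|_\frob^2$ combined with the convergence of the monotone sequence $\{f(\tensX^{(t)})\}_{t\geq 0}$. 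Your explicit five-case partition is just a cleaner bookkeeping of the paper's two claims plus its two-scenario analysis; the mathematical content is identical.
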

\begin{proof}
    See Appendix~\ref{app: TRAM 1}.
    \qed
\end{proof}

By using Lemma~\ref{prop: TRAM 1}, we can prove a stronger result as follows.
\begin{theorem}\label{thm: TRAM}
    Let $\{\tensX^{(t)}\}_{t\geq 0}$ be an infinite sequence generated by Algorithm~\ref{alg: TRAM}. Assume that $f$ is bounded below \revise{by $f^*$}. Then, it holds that 
    \[\liminf_{t\to\infty}\|\proj_{\tangent_{{\tensX}^{(t)}}\tensM_{\leq\vecr}}(-\nabla f({\tensX}^{(t)}))\|_\frob=0.\]
\end{theorem}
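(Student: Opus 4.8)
The plan is to control the tangent-cone stationary measure by two elementary estimates and then to reduce the statement to Lemma~\ref{prop: TRAM 1}. Since $\tangent_{\tensX}\!\tensM_{\leq\vecr}$ is a cone, the metric projection obeys the identity $\langle-\nabla f(\tensX),\proj_{\tangent_{\tensX}\!\tensM_{\leq\vecr}}(-\nabla f(\tensX))\rangle=\|\proj_{\tangent_{\tensX}\!\tensM_{\leq\vecr}}(-\nabla f(\tensX))\|_\frob^2$ (the ray through the minimizer lies in the cone), so Cauchy--Schwarz gives the crude bound
\[\|\proj_{\tangent_{\tensX}\!\tensM_{\leq\vecr}}(-\nabla f(\tensX))\|_\frob\leq\|\nabla f(\tensX)\|_\frob.\]
Moreover, because $\tensM_\vecr$ is open in $\tensM_{\leq\vecr}$, at a full-rank iterate the tangent cone coincides with $\tangent_{\tensX}\!\tensM_{\vecr}$, so there $\|\proj_{\tangent_{\tensX}\!\tensM_{\leq\vecr}}(-\nabla f(\tensX))\|_\frob=\|\grad f(\tensX)\|_\frob$. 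Consequently it suffices to produce a subsequence along which either the iterate is full-rank with vanishing Riemannian gradient, or the full gradient $\nabla f$ vanishes.

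By Lemma~\ref{prop: TRAM 1} I can extract a subsequence $\{t_j\}$ with $\|\tensT^{(t_j)}\|_\frob=\|\grad f(\tensX^{(t_j)})\|_\frob\to 0$, and I would classify each $t_j$ according to the branch of Algorithm~\ref{alg: TRAM} that is executed at $\tensX^{(t_j)}$. If $\ranktc(\tensX^{(t_j)})=\vecr$, the first paragraph already yields $\|\proj_{\tangent_{\tensX^{(t_j)}}\!\tensM_{\leq\vecr}}(-\nabla f(\tensX^{(t_j)}))\|_\frob=\|\tensT^{(t_j)}\|_\frob\to 0$. If $\ranktc(\tensX^{(t_j)})<\vecr$ and the restart test~\eqref{eq: restart criterion} fails, then $\varepsilon_2\|\nabla f(\tensX^{(t_j)})\|_\frob\leq\|\tensT^{(t_j)}\|_\frob$, so $\|\nabla f(\tensX^{(t_j)})\|_\frob\leq\|\tensT^{(t_j)}\|_\frob/\varepsilon_2\to 0$, and the crude bound again drives the measure to zero.

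The remaining indices are those where a restart (line~\ref{line: 13}) or a rank change occurs, and these I would tame by summability of the Armijo decreases. Along restart steps the direction $-\nabla f$ is used, so~\eqref{eq: Armijo} yields a decrease of at least $s_{\min}a\|\nabla f(\tensX^{(t)})\|_\frob^2$; summing over all restart steps and using that $f$ is bounded below forces $\|\nabla f\|_\frob\to 0$ on the restart subsequence, hence on any restart indices retained in $\{t_j\}$. Rank-deficiency terminations are handled by a counting argument: each rank decrease strictly lowers $\sum_k\underline r_k^{(t)}$, which is bounded below, so rank decreases recur infinitely only if rank increases do; but an accepted rank increase (line~\ref{line: 11}) satisfies~\eqref{eq: increase accept}, i.e.\ $\|\tensN_{\leq\vecl^{(t)}}^{(t)}\|_\frob\geq\varepsilon_1\|\tensT^{(t)}\|_\frob$ and $\varepsilon_2\|\nabla f(\tensX^{(t)})\|_\frob\leq\|\tensT^{(t)}\|_\frob$, while its Armijo decrease $\geq s_{\min}a\|\tensN_{\leq\vecl^{(t)}}^{(t)}\|_\frob^2$ is summable, so $\|\tensN_{\leq\vecl^{(t)}}^{(t)}\|_\frob\to 0$, whence $\|\tensT^{(t)}\|_\frob\to 0$ and $\|\nabla f(\tensX^{(t)})\|_\frob\to 0$ along the rank-increase subsequence. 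In every scenario a genuinely infinite subsequence with $\|\nabla f\|_\frob\to 0$ (or full-rank vanishing Riemannian gradient) survives, and $\liminf_t\|\proj_{\tangent_{\tensX^{(t)}}\!\tensM_{\leq\vecr}}(-\nabla f(\tensX^{(t)}))\|_\frob=0$ follows.

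The main obstacle is the branch bookkeeping rather than any single hard inequality: the tangent-cone measure can be read off only when $\|\nabla f\|_\frob$ itself is small (or the point is full-rank), whereas Lemma~\ref{prop: TRAM 1} controls merely the tangential component $\|\tensT\|_\frob$. The whole argument hinges on the thresholds~\eqref{eq: increase accept}--\eqref{eq: restart criterion} being calibrated so that smallness of $\|\tensT\|_\frob$ propagates to smallness of $\|\nabla f\|_\frob$ on every branch except restart, and on the summable Armijo decreases absorbing the restart and rank-increase branches. The delicate points are verifying that the extracted subsequences are simultaneously infinite and mutually exhaustive, and that the finitely many rank decreases do not obstruct the final extraction.
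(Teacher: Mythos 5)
Your proposal is correct and rests on the same pillars as the paper's proof: Lemma~\ref{prop: TRAM 1}, the bound $\|\proj_{\tangent_{\tensX}\!\tensM_{\leq\vecr}}(-\nabla f(\tensX))\|_\frob\leq\|\nabla f(\tensX)\|_\frob$, the identification of the tangent cone with the tangent space at full-rank iterates, the calibration of the thresholds in~\eqref{eq: increase accept}--\eqref{eq: restart criterion}, and the summability of the Armijo decreases. The architecture differs, though. The paper assumes (without loss of generality) $\|\nabla f(\tensX^{(t_j)})\|_\frob\geq\varepsilon_0$ along the subsequence and derives a contradiction: rank deficiency would force the restart branch, whose Armijo decrease of at least $s_{\min}a\varepsilon_0^2$ cannot recur infinitely often, so the subsequence is eventually of full rank and the measure equals $\|\tensT^{(t_j)}\|_\frob\to 0$. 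Crucially, the paper's subsequence carries the extra property $\|\tensT^{(t_j)}\|_\frob<\varepsilon_R^{(t_j)}$ (RGD terminated by stationarity rather than rank deficiency), which is established \emph{inside the proof} of Lemma~\ref{prop: TRAM 1}, not by its statement; this is why the paper never needs to discuss rank-decrease iterates. You work from the lemma's statement alone, which forces the branch-by-branch pigeonhole including the rank-decrease case; that route is legitimate and slightly more self-contained, and each branch is handled with correct estimates. One sentence of yours is loose: ``rank decreases recur infinitely only if rank increases do'' is accurate only if ``rank increases'' also counts restart steps, since the restart update $\tilde{\tensX}^{(t+1)}=\proj^{\mathrm{HO}}_{\leq\vecr}(\tensX^{(t)}-s^{(t)}\nabla f(\tensX^{(t)}))$ in line~\ref{line: 13} can raise the rank without line~\ref{line: 11} ever being executed. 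This does not break your proof, because that scenario entails infinitely many restarts, which your restart paragraph already disposes of via summability; but the counting step should read ``infinitely many rank decreases force infinitely many rank-increasing transitions, each of which is either a line~\ref{line: 11} step or a restart,'' after which both alternatives are covered by estimates you have already established.
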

\begin{proof}
    Let $\{\tilde{\tensX}^{(t)}\}_{t\geq 0}$ be an infinite sequence generated by Algorithm~\ref{alg: TRAM}. \revise{Recall ${\tensT}^{(t)}=\grad f({\tensX}^{(t)})$.} It follows from Lemma~\ref{prop: TRAM 1} that there exists a subsequence $\{\tensX^{(t_j)}\}_{j\geq 0}$, such that $\|\tensT^{(t_j)}\|_\frob\leq\varepsilon_R^{(t_j)}$ and $\lim_{j\to\infty}\|\tensT^{(t_j)}\|_\frob=0$. Assume that $\|\nabla f({\tensX}^{(t_j)})\|_\frob\geq\varepsilon_0$ holds for all $j\geq 0$ and some $\varepsilon_0>0$. Otherwise, \revise{the result is straightforward}.

    \revise{If line~\ref{line: 7} in Algorithm~\ref{alg: TRAM} is executed infinitely, there exists a subsequence $\{\tensX^{(t_{j_l})}\}_{l\geq 0}$ of $\{\tensX^{(t_j)}\}_{j\geq 0}$, such that $\ranktc(\tensX^{(t_{j_l})})=\vecr$. Therefore, 
    \[\lim_{l\to\infty}\|\proj_{\tangent_{{\tensX}^{(t_{j_l})}}\!\tensM_{\leq\vecr}}(-\nabla f({\tensX}^{(t_{j_l})}))\|_\frob
    =\lim_{l\to\infty}\|\tensT^{(t_{j_l})}\|_\frob
    =0.\]
    Otherwise, } since $\|\tensT^{(t_j)}\|_\frob$ converges to $0$ and \revise{$\|\nabla f({\tensX}^{(t_j)})\|_\frob\geq\varepsilon_0$, it follows from~\eqref{eq: restart criterion} that} the restart in line~\ref{line: 13} will be continuously executed for sufficiently large $j$, it follows from the backtracking line search in line~\ref{line: 13} that
    \begin{equation*}
        \begin{aligned}
            f({\tensX}^{(t_j)})-f(\tilde{\tensX}^{(t_j+1)})&\geq s_{\min}a\,\|\revise{\approj_{\tangent_{\tensX^{(t_j)}}\!\tensM_{\leq\vecr}}\!(-\nabla f({\tensX}^{(t_j)}))}\|_\frob^2\\
            &\geq s_{\min}a\,\revise{\omega^2\|\proj_{\tangent_{\tensX^{(t_j)}}\!\tensM_{\leq\vecr}}\!(-\nabla f({\tensX}^{(t_j)}))\|_\frob^2.}
        \end{aligned}
    \end{equation*}
    Consequently, 
    \[\lim_{j\to\infty}\|\proj_{\tangent_{{\tensX}^{(t_{j})}}\!\tensM_{\leq\vecr}}(-\nabla f({\tensX}^{(t_{j})}))\|_\frob
    =0.\]
    \qed
\end{proof}

\tikzstyle{stateTransition}=[->, thick]
\tikzstyle{decision} = [diamond, draw, align=center, font=\scriptsize, aspect=2]
\begin{figure}[htbp]
    \centering
    \begin{tikzpicture}
        \node[draw, rectangle, font=\scriptsize, rounded corners=1pt, thick, align=center] (Ini) at (0,0) {Initial guess \\  $(\tensX^{(0)},\underline{\vecr}^{(0)})$};

        \node[draw, rectangle, font=\scriptsize, rounded corners=1pt, thick, align=center] (Fix) at ($(Ini)+(0,-40pt)$) {Line search on $\tensM_{\underline{\vecr}^{(t)}}$\\  $({\tensX}^{(t)},\underline{\vecr}^{(t)})$};

        \draw[stateTransition] (Ini) -- (Fix);
        \node[decision, inner sep=0ex] (Dec1) at ($(Fix)+(0,-50pt)$) {Rank deficiency?};
        \draw[stateTransition] (Fix) -- (Dec1);

        \node[decision, inner sep=-1ex] (Dec2) at ($(Dec1)+(0,-60pt)$) {Rank increase?\\ $\|\tensN_{\leq\vecl^{(t)}}^{(t)}\|_\frob\geq\varepsilon_1\|\tensT^{(t)}\|_\frob$\\ \ };

        \draw[stateTransition] (Dec1) -- (Dec2);
        \node[right] at ($(Dec1)-(0,25pt)$) {\scriptsize No};

        \node[draw, rectangle, font=\scriptsize, rounded corners=1pt, thick, align=center] (Update) at ($(Fix)+(110pt,0)$) { Parameters update \\ $\varepsilon_R^{(t+1)}=\rho_R^{}\varepsilon_R^{(t)}$ \\  $(\tensX^{(t)},\vecr^{(t)})\to(\tilde{\tensX}^{(t+1)},\underline{\vecr}^{(t+1)})$};
        \draw[stateTransition] (Update) -- (Fix);

        \draw[stateTransition] (Dec2) -| (Update);
        \node[above] at ($(Dec2)+(80pt,0)$) {\scriptsize No};

        \node[draw, rectangle, font=\scriptsize, rounded corners=1pt, thick, align=center] (Increase) at ($(Dec2)+(0,-75pt)$) {Rank increasing \vspace{2mm}\\ $\tensX^{(t)}+s^{(t)}\tensN_{\leq\vecl^{(t)}}^{(t)}=\tilde{\tensX}^{(t+1)}$ \vspace{-2mm} \\ \includegraphics[width=250pt]{RankIncrease.pdf}};
        \draw[stateTransition] (Dec2) -- (Increase);
        \node[right] at ($(Dec2)-(0,30pt)$) {\scriptsize Yes};

        \node[draw, rectangle, font=\scriptsize, rounded corners=1pt, thick, align=center] (Decrease) at ($(Dec1)+(-110pt,0)$) {Rank decreasing\\ $(\tensX^{(t)},\Delta)\to(\tilde{\tensX}^{(t+1)},\underline{\vecr}^{(t+1)})$\\ \includegraphics[width=80pt]{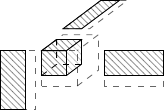}};
        \coordinate (aux) at ($(Decrease)+(0,50pt)$); 
        \draw[stateTransition] (Dec1) -- (Decrease);
        \node[above] at ($(Dec1)-(45pt,0)$) {\scriptsize Yes};
        \draw[stateTransition] (Decrease) -- (aux);
        \draw[stateTransition] (aux) -- (Fix);

        \coordinate (aux) at ($(Increase)-(170pt,-20pt)$);
        \draw[stateTransition] (Increase) -| (aux) |- (Fix);

    \end{tikzpicture}
    \caption{A flowchart of the practical Tucker rank-adaptive method}
    \label{fig: TRAM}
\end{figure}

\subsection{Practical implementation details of TRAM}\label{subsec: practical TRAM}
In practice, the TRAM method is implemented by following the flowchart in Fig.~\ref{fig: TRAM}. \revise{For the rank-increasing procedure,} we notice that applying restart in Algorithm~\ref{alg: TRAM} can be computationally disadvantageous, \revise{as it increases the rank to $\vecr$ in the most likely cases}. This scenario arises when $\tensN_{\leq\vecl^{(t)}}^{(t)}$ is rejected by~\eqref{eq: increase accept}. Instead, we opt to tighten the stopping criteria by setting $\varepsilon_R^{(t+1)}=\rho_R^{}\varepsilon_R^{(t)}$ and proceed with the RGD method again. For the rank-decreasing procedure, let $\{\tensY^{(i)}\}_{i\geq 0}$ be a sequence generated by RGD with $\tensY^{(0)}=\tilde{\tensX}^{(t)}$. In practice, the ratio $\sigma_{\min}(\matY_{(k)}^{(i)})/\sigma_{\max}(\matY_{(k)}^{(i)})$ is computed for every point $\tensY^{(i)}=\tensG^{(i)}\times_{k=1}^d\matu_k^{(i)}\in\tensM_{\underline{\vecr}^{(t)}}$ in Algorithm~\ref{alg: RGD} to detect the rank deficiency. We observe that 
\[\matY_{(k)}^{(i)}=\matu_{k}^{(i)}\matG_{(k)}^{(i)}(\matv_{k}^{(i)})^\T=\matu_{k}^{(i)}\breve{\matu}_{k}^{(i)}\breve{\Sigma}(\breve{\matv}_{k}^{(i)})^\T(\matv_{k}^{(i)})^\T\]
is a SVD of $\matY_{(k)}^{(i)}$, where $\breve{\matu}_{k}^{(i)}\breve{\Sigma}(\breve{\matv}_{k}^{(i)})^\T$ is the SVD of $\matG_{(k)}^{(i)}$. Therefore, it holds that
\[\frac{\sigma_{\min}(\matY_{(k)}^{(i)})}{\sigma_{\max}(\matY_{(k)}^{(i)})}=\frac{\sigma_{\min}(\matG_{(k)}^{(i)})}{\sigma_{\max}(\matG_{(k)}^{(i)})}.\]
We can benefit from it and avoid the explicit large-size construction of $\tensY^{(i)}$ to carry out the rank detection by employing a small-size $\tensG^{(i)}$. Additionally, the condition $f({\tensX}^{(t)})\geq f(\proj_{\leq\hat{\vecr}}^{\mathrm{HO}}({\tensX}^{(t)}))$ in Algorithm~\ref{alg: RGD} will never be checked and thus the rank is indeed decreased.

\section{Numerical experiments}\label{sec: experiments}
In this section, we test the performance of the proposed GRAP (Algorithm~\ref{alg: GRAP}), rfGRAP (Algorithm~\ref{alg: rfGRAP}), TRAM (Algorithm~\ref{alg: TRAM}) and other existing methods on the tensor completion problem. Specifically, given a partially observed tensor $\tensA\in\mathbb{R}^{n_1\times n_2\times\cdots\times n_d}$ on an index set $\Omega\subseteq[n_1]\times[n_2]\times\cdots\times[n_d]$. The goal of Tucker tensor completion is to recover the tensor $\tensA$ from its entries on $\Omega$ based on the low-rank Tucker decomposition. The optimization problem can be formulated on the Tucker tensor variety~$\tensM_{\leq\vecr}$, i.e., 
\begin{equation*}
    \begin{aligned}
        \min\ \ & \frac12\|\proj_\Omega(\tensX)-\proj_\Omega(\tensA)\|_\frob^2\\
        \subjectto\ \ & \quad \quad \tensX\in\tensM_{\leq\vecr},
    \end{aligned}
\end{equation*}
where $\proj_\Omega$ is the projection operator onto $\Omega$, i.e, $\proj_\Omega(\tensX)(i_1,\dots,i_d)=\tensX(i_1,\dots,i_d)$ if~$(i_1,\dots,i_d)\in\Omega$, otherwise $\proj_\Omega(\tensX)(i_1,\dots,i_d)=0$ for $\tensX\in\mathbb{R}^{n_1\times\cdots\times n_d}$. The \emph{sampling rate} is denoted by $p:=|\Omega|/(n_1n_2\cdots n_d)$.

\subsection{Implementation details}
First, we introduce all the default settings and implementation details. In general, the tensor-related implementation of proposed methods is based on the {Tensor-Toolbox v3.4\footnote{Tensor-Toolbox v3.4: \url{http://www.tensortoolbox.org/}}}. All experiments are performed on a workstation with two Intel(R) Xeon(R) Processors Gold 6330 (at 2.00GHz$\times$28, 42M Cache) and 512GB of RAM running Matlab R2019b under Ubuntu 22.04.3. The codes of proposed methods are available at~\url{https://github.com/JimmyPeng1998}.

\paragraph{Computing projections}
Given a tensor $\tensT\in\mathbb{R}^{n_1\times n_2\times\cdots\times n_d}$ and $\tensX=\tensG\times_{k=1}^d\matu_k$ with $\ranktc(\tensX)=\underline{\vecr}$, the proposed methods involve the projections onto the tangent cone~$\tangent_\tensX\!\tensM_{\leq\vecr}$ and Tucker tensor varieties $\tensM_{\vecr}$. We provide the computational details of two projections, $\approj_{\tangent_\tensX\!\tensM_{\leq\vecr}}(\tensT)$ and $\proj_{\leq\vecr}^{\mathrm{HO}}(\tensT)$. In practice, we never manipulate a large full tensor $\tensX$ with $n_1n_2\cdots n_d$ number of parameters in $\approj_{\tangent_\tensX\!\tensM_{\leq\vecr}}(\tensT)$ and~$\proj_{\leq\vecr}^{\mathrm{HO}}(\tensT)$ but core tensor and unfolding matrices.

The approximate projection $\approj_{\tangent_\tensX\!\tensM_{\leq\vecr}}(\tensT)$ in~\eqref{eq: an approximate projection} involves choosing appropriate matrices $\tilde{\matu}_{k,1}\in\St(r_k-\underline{r}_k,n_k)$ with $\tilde{\matu}_{k,1}^\T\matu_k^{}=0$ for $k\in[d]$. 
\revise{We generate a random matrix $\matM_{k,1}\in\mathbb{R}^{n_k\times(r_k-\underline{r}_k)}$ whose elements are i.i.d. samples from the normal distribution $N(0,1)$, and $\tilde{\matu}_{k,1}$ is chosen by the last $(r_k-\underline{r}_k)$ columns of the Q-factor of the matrix~$[\matu_k\ \matM_{k,1}]\in\mathbb{R}^{n_k\times r_k}$.}
Subsequently, the approximate projection onto $\tangent_\tensX\!\tensM_{\leq\vecr}$ can be computed by Algorithm~\ref{alg: orth proj onto tangent cone}. The approach of choosing $\tilde{\matu}_{k,1}$ is also adopted to selecting $\{\matu_{k,1}\}_{k=1}^d$ in Algorithm~\ref{alg: rank increase}. \revise{Since the angle condition is computationally intractable, we do not verify the angle condition in practice.} Additionally, the orthogonal projection onto the tangent space is computed by GeomCG toolbox\footnote{GeomCG toolbox: \url{https://www.epfl.ch/labs/anchp/index-html/software/geomcg/}.}. Note that if $\ranktc(\tensX^{(t)})=\vecr$ in GRAP method, the projection onto the tangent cone is also computed by GeomCG toolbox for fair comparison since $\tangent_{\tensX^{(t)}}\!\tensM_{\leq\vecr}=\tangent_{\tensX^{(t)}}\!\tensM_{\vecr}$.

For the projection $\proj_{\leq\vecr}^{\mathrm{HO}}(\tensT)$ in~\eqref{eq: HOSVD}, in view of Algorithm~\ref{alg: GRAP}, we consider $\tensT$ being in the form of $\tensT=\tensX+\tensV$ with $\tensV\in\tangent_{\tensX}\!\tensM_{\leq\vecr}$. We observe from~\eqref{eq: Tucker tangent cone} that
\[\tensT=\tensX+\tensV\in\bigotimes_{k=1}^d\left(\Span(\matu_k)+\Span(\matu_{k,1})+\Span(\matu_{k,2}\matR_{k,2})\right)\subseteq\tensM_{\leq(\vecr+\underline{\vecr})}.\]
Therefore, the tensor $\tensT$ admits a Tucker decomposition $\tensT=\tilde{\tensG}\times_{k=1}^d\tilde{\matu}_k\in\tensM_{\tilde{\vecr}}$ with some $\tilde{\vecr}\leq\vecr+\underline{\vecr}$. Instead of implementing HOSVD directly to the full tensor~$\tensT\in\mathbb{R}^{n_1\times\cdots\times n_d}$, we exploit its low-rank structure and apply HOSVD to the core tensor~$\tilde{\tensG}\in\mathbb{R}^{\tilde{r}_1\times\cdots\times \tilde{r}_d}$ of $\tensT$, which is much smaller. Specifically, denote the {rank-$\vecr$} HOSVD of $\tilde{\tensG}$ by~$(\tilde{\tensG}\times_{k=1}^d\hat{\matu}_k^\T)\times_{k=1}^d\hat{\matu}_k$, where $\hat{\matu}_k\in\St(r_k,\tilde{r}_k)$ is the leading~${r}_k$ singular vectors of $\tilde{\matG}_{(k)}$. Therefore, it holds that
\[\proj_{\leq\vecr}^{\mathrm{HO}}(\tensT)=((\tilde{\tensG}\times_{k=1}^d\hat{\matu}_k^\T)\times_{k=1}^d\hat{\matu}_k)\times_{k=1}^d\tilde{\matu}_k=(\tilde{\tensG}\times_{k=1}^d\hat{\matu}_k^\T)\times_{k=1}^d(\tilde{\matu}_k\hat{\matu}_k).\]
Note that $\tilde{\matu}_k\hat{\matu}_k\in\St({r}_k,n_k)$ since $(\tilde{\matu}_k\hat{\matu}_k)^\T(\tilde{\matu}_k\hat{\matu}_k)=\matI_{{r}_k}$. This technique is also adopted to the rank-decreasing procedure and the retraction in the Riemannian gradient descent method (Algorithm~\ref{alg: RGD}) on $\tensM_{\underline{\vecr}}$.

\paragraph{Exact line search on tangent cone}
Similar to the optimization on fixed-rank manifold of Tucker tensors~\cite{kressner2014low}, given a point $\tensX^{(t)}$ and a descent direction~$\tensV^{(t)}\in\tangent_{\tensX^{(t)}}\!\tensM_{\leq\vecr}$, the solution of the optimization problem 
\[s_0^{(t)}=\argmin_{s\geq 0}\|\proj_\Omega(\tensX^{(t)}+s\tensV^{(t)})-\proj_\Omega\!\tensA\|_\frob^2\] 
enjoys a closed-form
\[s_0^{(t)}=\frac{\langle\proj_\Omega\!\tensV^{(t)},\proj_\Omega(\tensA-\tensX^{(t)})\rangle}{\langle\proj_\Omega\!\tensV^{(t)},\proj_\Omega\!\tensV^{(t)}\rangle}\geq 0.\]
The computation of $\proj_\Omega\!\tensV^{(t)}$ is implemented in a \texttt{MEX} function.
We adopt $s_0^{(t)}$ as an initial stepsize of Armijo backtracking line search in~\eqref{eq: Armijo}.

\paragraph{Compared methods}
For Tucker-based methods, we compare the proposed methods with a Riemannian conjugate gradient method (GeomCG)~\cite{kressner2014low}, and a Riemannian conjugate gradient method on quotient manifold under a preconditioned metric\footnote{Available at: \url{https://bamdevmishra.in/codes/tensorcompletion/}.} (Tucker-RCG)~\cite{kasai2016low} for optimization on fixed-rank manifold. 

We also compare the proposed methods with other candidates based on different tensor formats. For CP decomposition, we choose the graph-based alternating minimization method\footnote{Available at: \url{https://gitlab.com/ricky7guanyu/tensor-completion-with-regularization-term}.} by Guan et~al.~\cite{guan2020alternating}, denoted by CP-AltMin. We consider the Riemannian conjugate gradient method\footnote{TTeMPS toolbox: \url{https://www.epfl.ch/labs/anchp/index-html/software/ttemps/}.} in~\cite{steinlechner2016riemannian} for tensor train completion, denoted by TT-RCG. For tensor completion in tensor ring decomposition, we consider the Riemannian gradient descent method (TR-RGD)\footnote{LRTCTR toolbox: \url{https://github.com/JimmyPeng1998/LRTCTR}} under a preconditioned metric proposed by Gao et~al.~\cite{gao2024riemannian}.

\paragraph{Stopping criteria}
The performance of all methods is evaluated by the training and test errors
\[\varepsilon_{\Omega}(\tensX):=\frac{\|\proj_\Omega(\tensX)-\proj_\Omega(\tensA)\|_\frob}{\|\proj_\Omega(\tensA)\|_\frob}\quad\text{and}\quad\varepsilon_{\Gamma}(\tensX):=\frac{\|\proj_\Gamma(\tensX)-\proj_\Gamma(\tensA)\|_\frob}{\|\proj_\Gamma(\tensA)\|_\frob},\]
where $\Gamma$ is a test set different from the training set $\Omega$. We terminate the methods if: 1) the training error $\varepsilon_{\Omega}(\tensX^{(t)})<10^{-12}$; 2) the relative change of the training error $(\varepsilon_{\Omega}(\tensX^{(t)})-\varepsilon_{\Omega}(\tensX^{(t-1)}))/\varepsilon_{\Omega}(\tensX^{(t-1)})<10^{-8}$; 3) maximum iteration number is reached; 4) time budget is exceeded. 

\paragraph{Default settings of proposed methods}
The default settings of the proposed methods are reported below. We set $\rho_R=0.5$, $\varepsilon_R^{(0)}=0.1$, rank-decreasing parameters $\Delta=0.01$ and $\rho_1=0.5$, and rank-increasing parameters $\vecl=(1,1,\dots,1)$ and $\varepsilon_1=0.01$ in TRAM. The backtracking parameters are set to be $\rho=0.5$, $a=10^{-4}$ and $s_{\min}=10^{-10}$. Additionally, the maximum iteration number of fixed-rank line search in the TRAM method is set to be $5$.

\subsection{Experiments on synthetic data}\label{subsec: synthetic}
We test the recovery performance of Tucker-based methods on synthetic data. Given $\vecr^*=(r_1^*,r_2^*,\dots,r_d^*)$, we consider a synthetic low-rank tensor $\tensA$ generated by 
\[\tensA=\tensG^*\times_{k=1}^d\matu_k^*,\]
where the entries of $\tensG^*\in\mathbb{R}^{r_1^*\times r_2^*\times\cdots\times r_d^*}$ and $\matu_k^*\in\mathbb{R}^{n_k^{}\times r_k^*}$ are sampled from the normal distribution $N(0,1)$. Then, $\matu_k^*$ is orthogonalized by the QR decomposition. We set $d=3$, $n_1=n_2=n_3=400$, the size of test set $|\Gamma|=pn_1n_2n_3$, and $r_1^*=r_2^*=r_3^*=6$. The initial guess $\tensX^{(0)}$ is generated in a same fashion with given rank $\underline{\vecr}^{(0)}$. A method is terminated if the training error $\varepsilon_\Omega(\tensX^{(t)})\leq 10^{-12}$ or it exceeds the time budget~$200$s.

\paragraph{Test with true rank}
First, we examine the performance of Tucker-based methods with true rank, i.e., $\vecr=\vecr^*=(6,6,6)$. To ensure a fair comparison, we compare the proposed methods with GeomCG and Tucker-RCG with initial guess $\tensX^{(0)}\in\tensM_{\vecr}$. Figure~\ref{fig: synthetic, true rank} reports the test error of Tucker-based methods with sampling rate $p=0.01,0.05$. First, we observe that GRAP and TRAM methods are comparable to GeomCG and Tucker-RCG. Second, rfGRAP method requires more iterations than other candidates, since it only adopts partial information about the tangent cone to avoid retraction. 

\begin{figure}[htbp]
    \centering
    \subfigure{\includegraphics[width=0.48\textwidth]{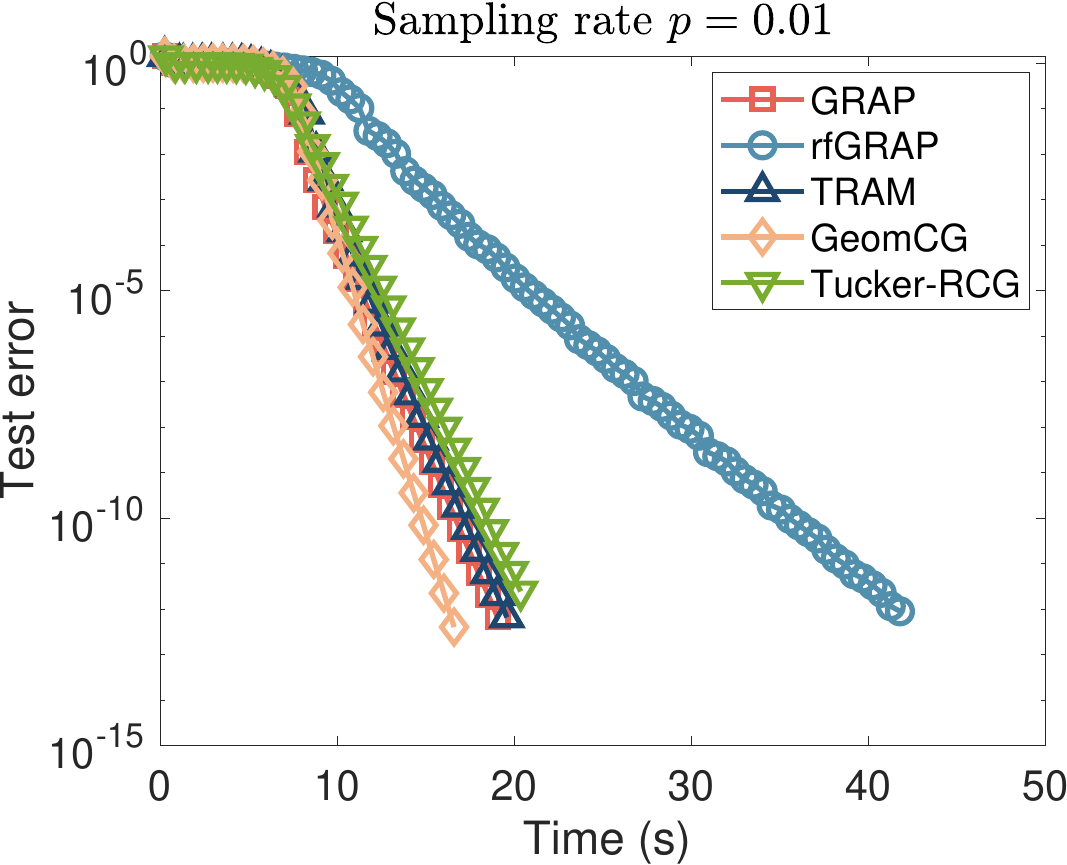}}
    \subfigure{\includegraphics[width=0.48\textwidth]{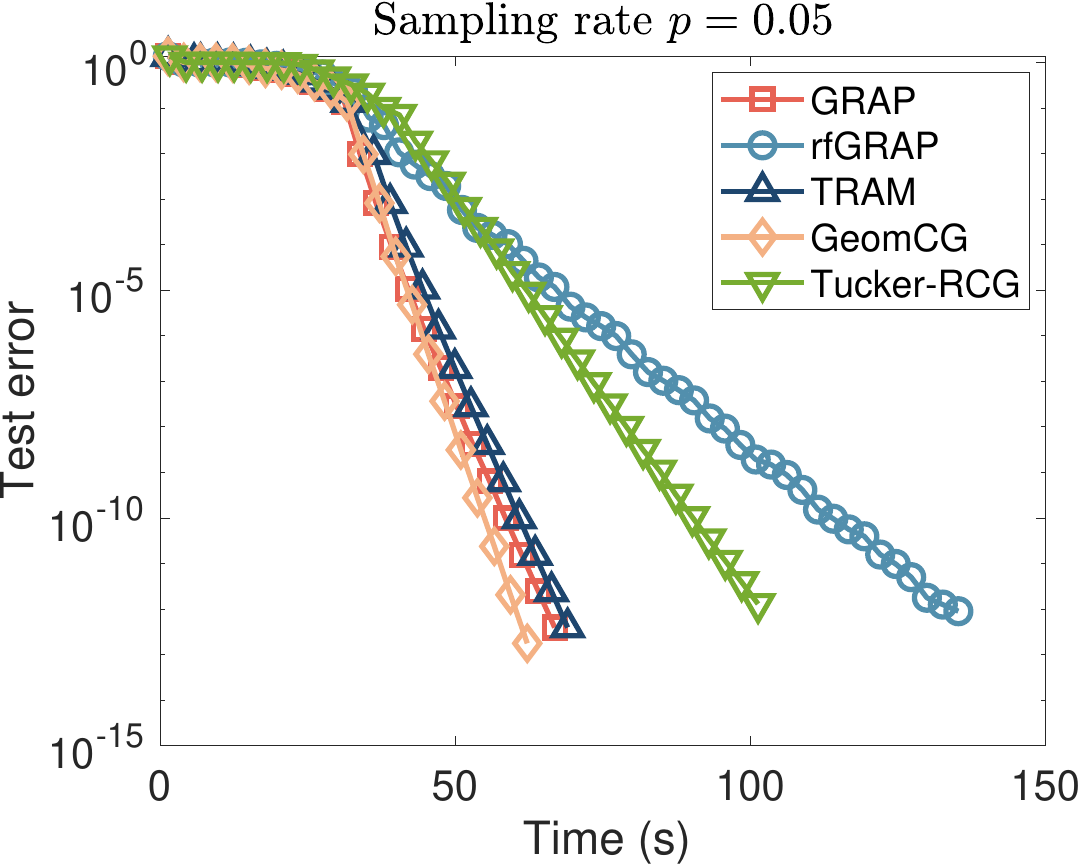}}
    \caption{The recovery performance under sampling rate $p=0.01,0.05$}
    \label{fig: synthetic, true rank}
\end{figure}

\paragraph{Test with under-estimated initial rank}
In contrast with the Riemannian methods on $\tensM_{\vecr}$, the proposed methods can adopt any initial guess $\tensX^{(0)}\in\tensM_{\leq\vecr}$. Therefore, we compare the proposed methods under different initial ranks $\vecr^{(0)}=(r^{(0)},r^{(0)},r^{(0)})$ for $r^{(0)}=1$ and $5$. The sampling rate is chosen as $p=0.05$. Note that we still run GeomCG and Tucker-RCG on~$\tensM_{\vecr^{(0)}}$. The test error is reported in Fig.~\ref{fig: underestimated initial rank}. We observe from Fig.~\ref{fig: underestimated initial rank} that the proposed GRAP and rfGRAP methods have favorably comparable performance than TRAM. A rank-increasing procedure is required to find the true rank $\vecr^*$ in the TRAM method. In addition, the proposed TRAM method can successfully find the true rank $\vecr^*$. However, since $\vecr^{(0)}<\vecr^*$, the GeomCG and Tucker-RCG methods can only obtain a poor low-rank approximation of the data tensor $\tensA$. Therefore, the rank-increasing procedure does allow us to search in a larger space with higher accuracy.

\begin{figure}[htbp]
    \centering
    \includegraphics[width=\textwidth]{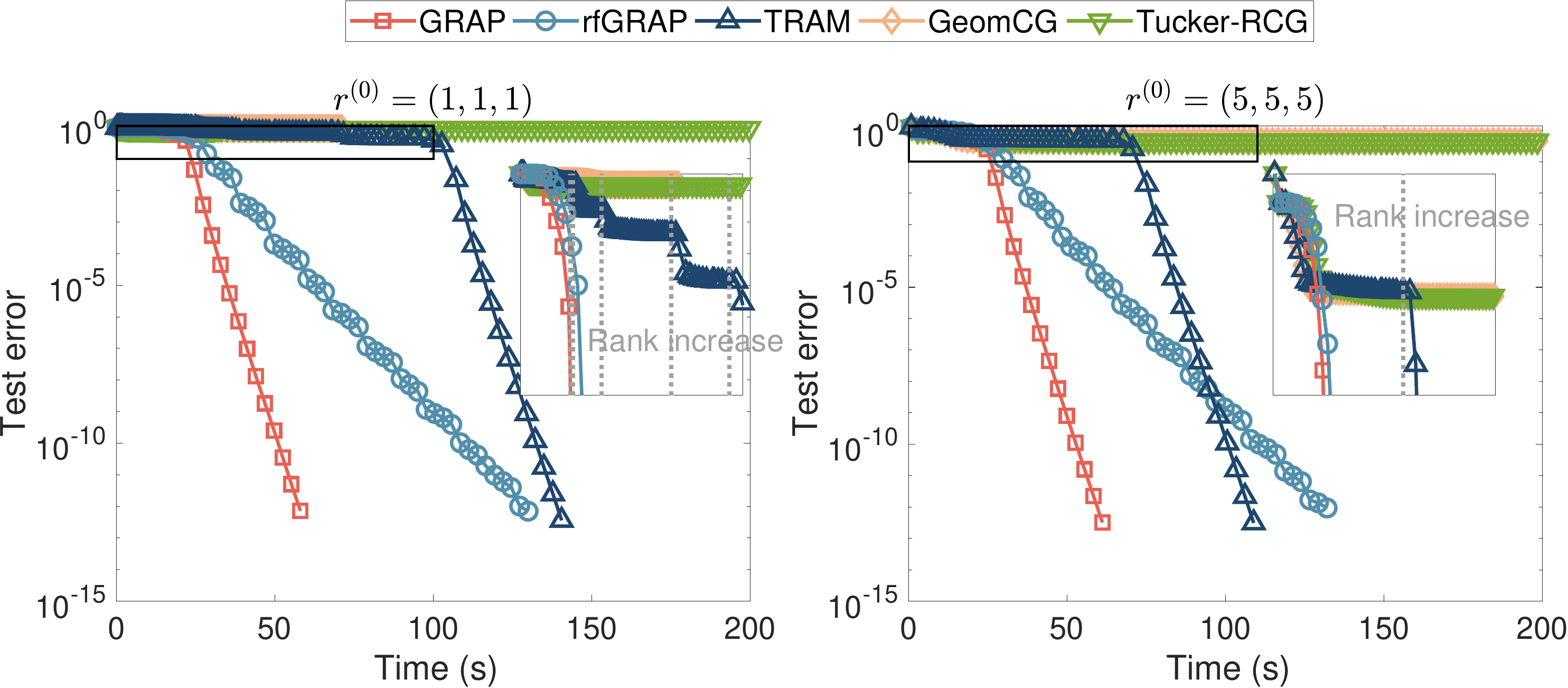}
    \caption{Test error under different initial ranks $\vecr^{(0)}=(1,1,1)$ and $\vecr^{(0)}=(5,5,5)$}
    \label{fig: underestimated initial rank}
\end{figure}

\paragraph{Test with over-estimated rank}
We test the performance of Tucker-based methods under a set of over-estimated ranks $\vecr=(r,r,r)$ with $r=7,8,9,10,11,12>r^*=6$. We set the sampling rate $p=0.01$. To ensure a fair comparision to GeomCG and Tucker-RCG(Q), the initial guess $\tensX^{(0)}$ is generated from $\tensM_{\vecr}$. The numerical results are reported in Figs.~\ref{fig: biased results} and~\ref{fig: biased singular values}. First, we observe from Fig.~\ref{fig: biased results} that the proposed TRAM method converges while other candidates fail to recover the data tensor due to the over-estimated rank parameter. Second, the right figure in Fig.~\ref{fig: biased results} suggests that TRAM successfully recovers the true Tucker rank of the data tensor $\tensA$ under all selections of rank parameter. Therefore, the TRAM performs better than the other candidates. Additionally, Figure~\ref{fig: biased singular values} provides history of the singular values of the unfolding matrices $\matX_{(1)}^{(t)}$, $\matX_{(2)}^{(t)}$, and $\matX_{(3)}^{(t)}$ in TRAM for $\vecr=(8,8,8)$. The proposed TRAM method indeed detects the disparity between the leading six singular values and the subsequent two singular values. The rank-decreasing procedure is activated to reduce the rank parameter to the true rank $\vecr^*$.

\begin{figure}[htbp]
    \centering
    \subfigure{\includegraphics[width=0.48\textwidth]{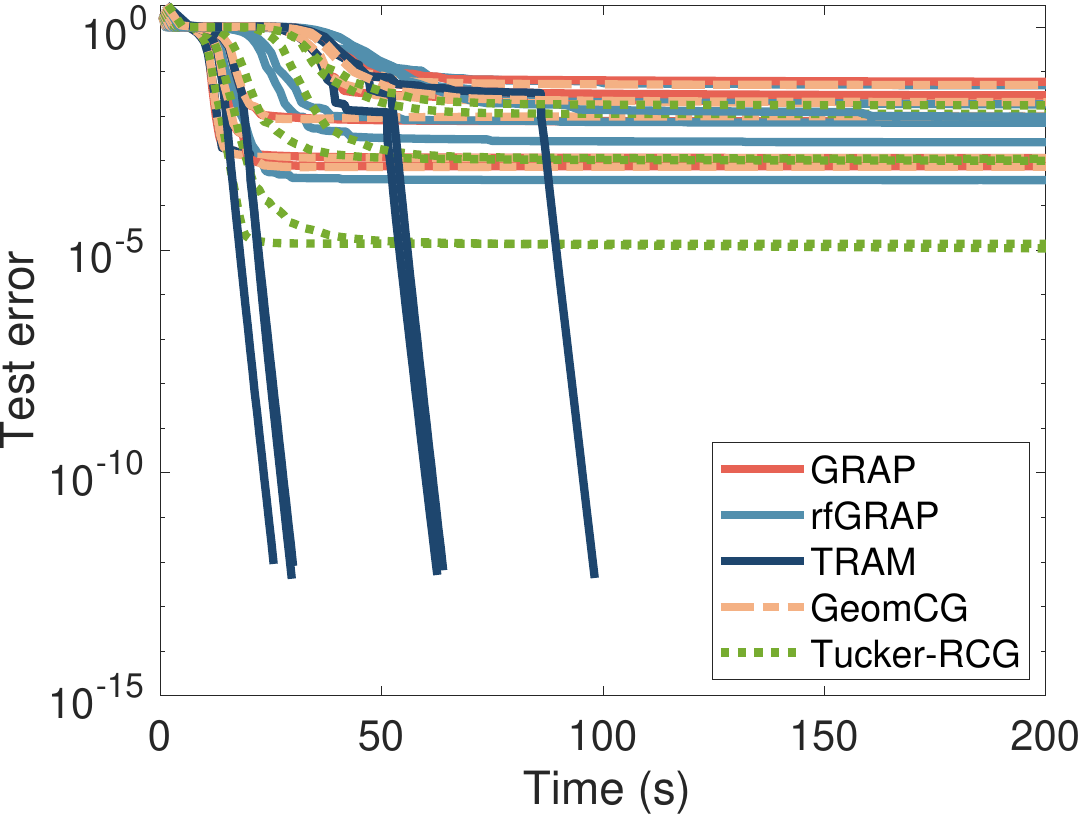}}\
    \subfigure{\includegraphics[width=0.465\textwidth]{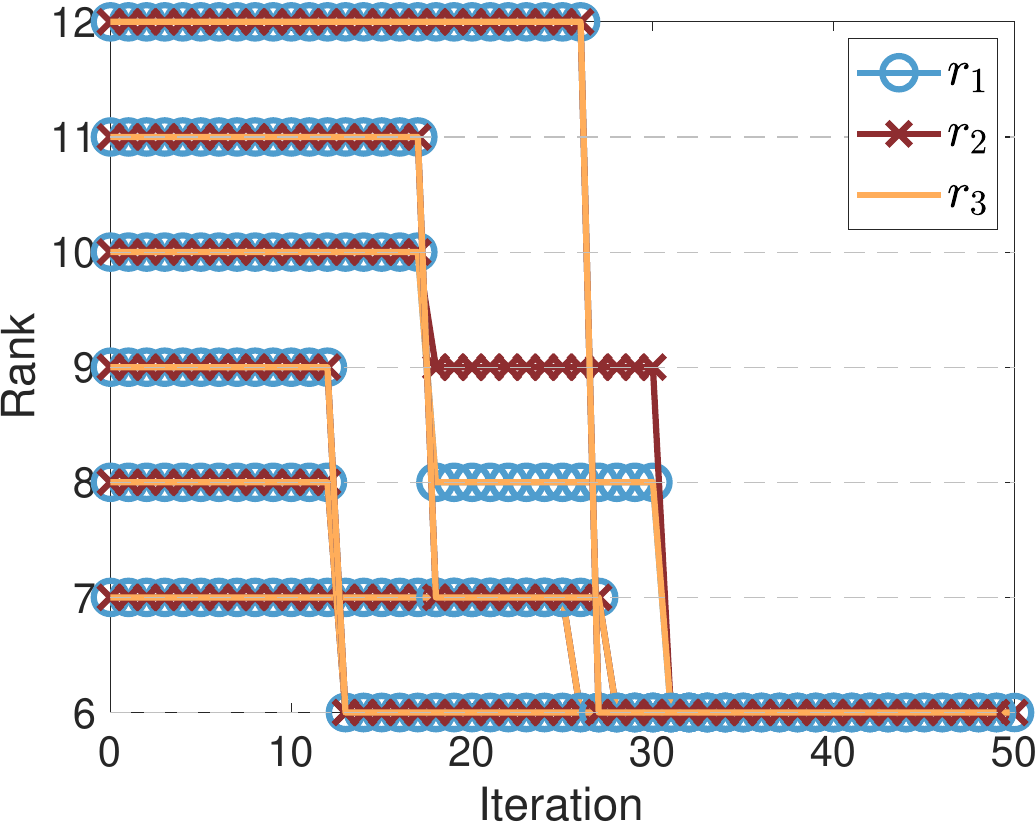}}
    \caption{Numerical results on synthetic dataset under over-estimated rank parameter of Tucker-based methods. Left: test error. Right: rank update of TRAM}
    \label{fig: biased results}
\end{figure}

\begin{figure}[htbp]
    \centering
    \includegraphics[width=\textwidth]{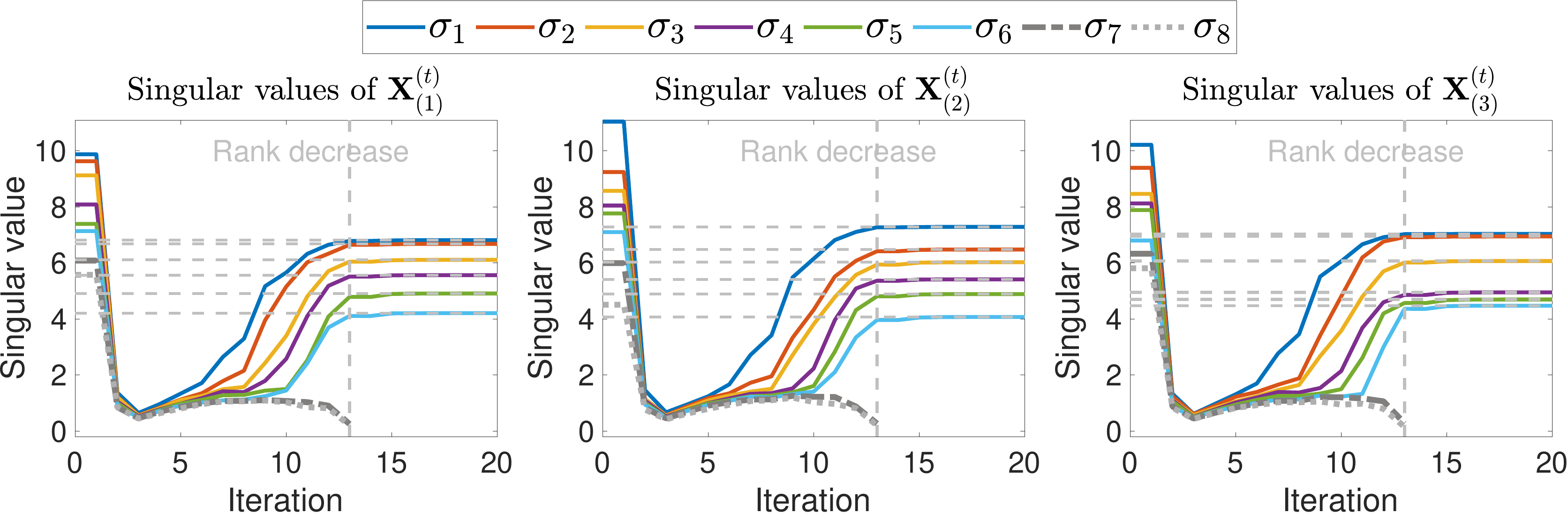}
    \caption{The history of singular values of unfolding matrices $\matX_{(1)}^{(t)}$, $\matX_{(2)}^{(t)}$, and $\matX_{(3)}^{(t)}$ for $\vecr=(8,8,8)$ in TRAM}
    \label{fig: biased singular values}
\end{figure}

\revise{
    Additionally, in order to verify the effect of the rank-increasing procedure, we compare the proposed methods under initial rank $\vecr^{(0)}=(1,1,1)$ and a set of over-estimated ranks $\vecr=(r,r,r)$ with $r=7,8,9,10,11,12>r^*=6$. Figure~\ref{fig: over rank under init} reports the test error and the history of rank update in TRAM method. We observe that only the TRAM method can find the true rank due to the rank-increasing procedure. 

    \begin{figure}[htbp]
        \centering
        \subfigure{\includegraphics[width=0.48\textwidth]{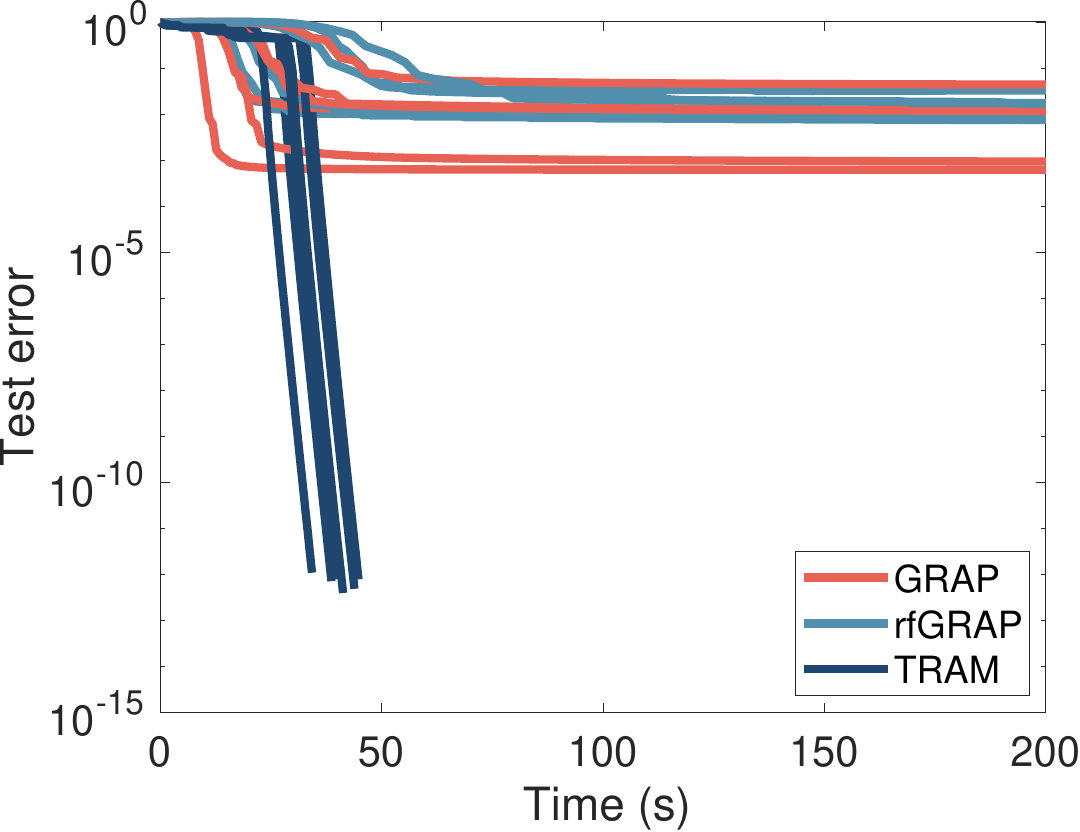}}\
        \subfigure{\includegraphics[width=0.465\textwidth]{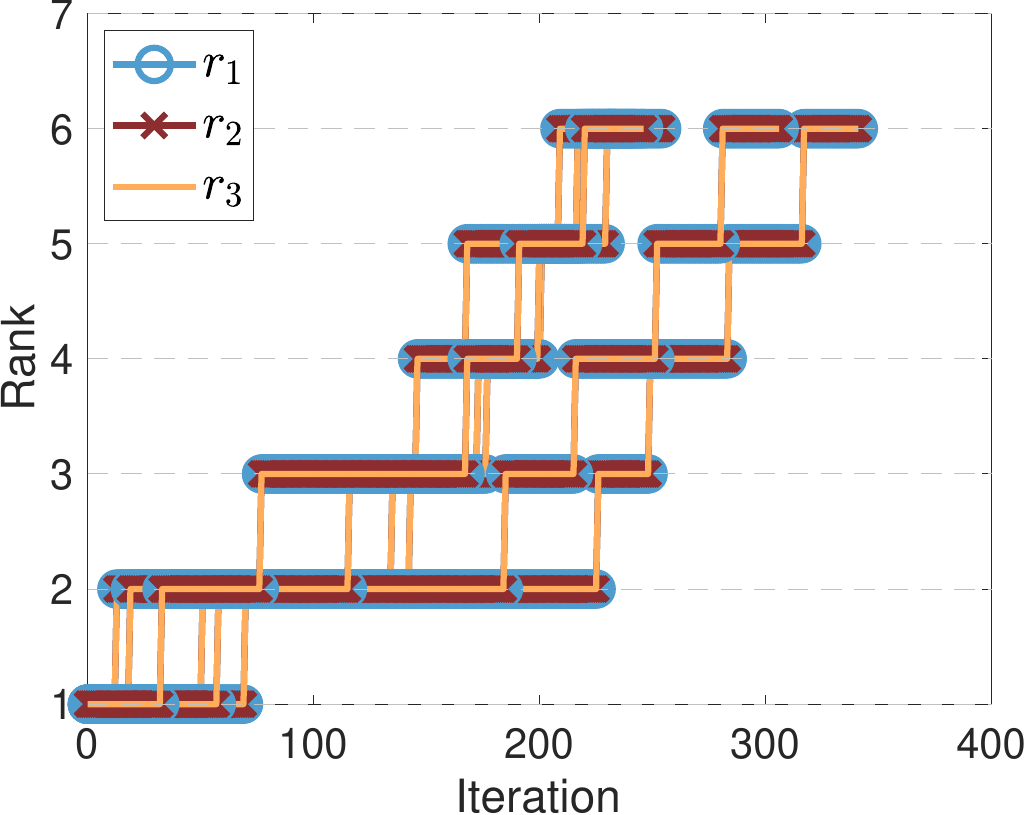}}
        \caption{\revise{Numerical results on synthetic dataset under over-estimated rank parameters and under-estimated initial rank $\vecr^{(0)}=(1,1,1)$. Left: test error. Right: rank update of TRAM}}
        \label{fig: over rank under init}
    \end{figure}

    In summary, both the rank-decreasing and rank-increasing procedure are vital for finding an appropriate rank parameter if the true rank is not available.
}

\subsection{Experiments on hyperspectral images}
In this experiment, we test the performance of proposed methods and other candidates on hyperspectral images, which is formulated as a third order tensor~$\tensA\in\mathbb{R}^{n_1\times n_2\times n_3}$. Mode three of $\tensA$ represents the reflectance level under $n_3$ wavelength values of light. Mode one and two represents the reflectance level of light under different wavelengths. We select the ``Ribeira Hotel Image'' (Ribeira\footnote{Image source: hsi\_32.mat from \url{https://figshare.manchester.ac.uk/articles/dataset/Fifty_hyperspectral_reflectance_images_of_outdoor_scenes/14877285}.}) with size~$249\times 329\times 33$ from {``50 reduced hyperspectral reflectance images''} by Foster~\cite{foster2022colour}, and ``220 Band AVIRIS Hyperspectral Image'' (AVIRIS\footnote{Available at \url{https://purr.purdue.edu/publications/1947/1}.}) with size~$145\times 145\times 220$. Figure~\ref{fig: HSI true} shows the twenty-fourth frame of two hyperspectral images. 

\begin{figure}[htbp]
    \centering
    \subfigure{\includegraphics[height=0.2\textheight]{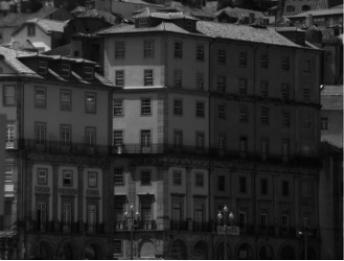}}\
    \subfigure{\includegraphics[height=0.2\textheight]{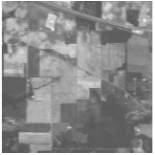}}
    \caption{The twenty-fourth frame of two images. Left: ``Ribeira''. Right: ``AVIRIS''}
    \label{fig: HSI true}
\end{figure}

We evaluate the recovery performance of image completion by the peak signal-to-noise ratio (PSNR) defined by
\[\mathrm{PSNR}:=10\log_{10}\left(n_1n_2n_3\frac{\max(\tensA)}{\|\tensX-\tensA\|_\frob^2}\right),\]
where $\max(\tensA)$ denotes the largest element of $\tensA$. Additionally, the relative error
\[\mathrm{relerr}:=\frac{\|\tensX-\tensA\|_\frob}{\|\tensA\|_\frob}\]
is also reported. The sampling rate is $p=0.1$. We test the Tucker-based methods under the rank parameter $\vecr=(r,r,r)$ with $r=5,10,15,\dots,30$. To ensure a fair comparison, a method is terminated if it reaches the maximum iteration number~250, which is the same as~\cite[\S 5]{kasai2016low}.

\begin{figure}[htbp]
    \centering
    \subfigure{\includegraphics[width=0.48\textwidth]{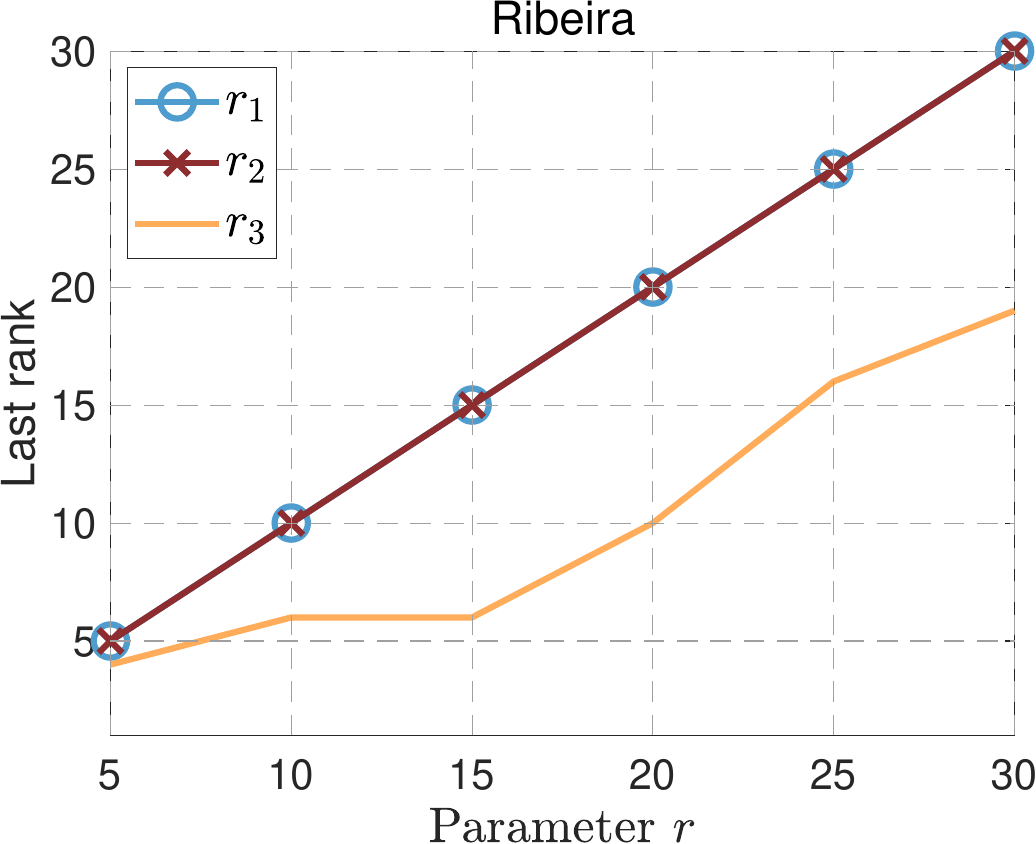}}\ 
    \subfigure{\includegraphics[width=0.48\textwidth]{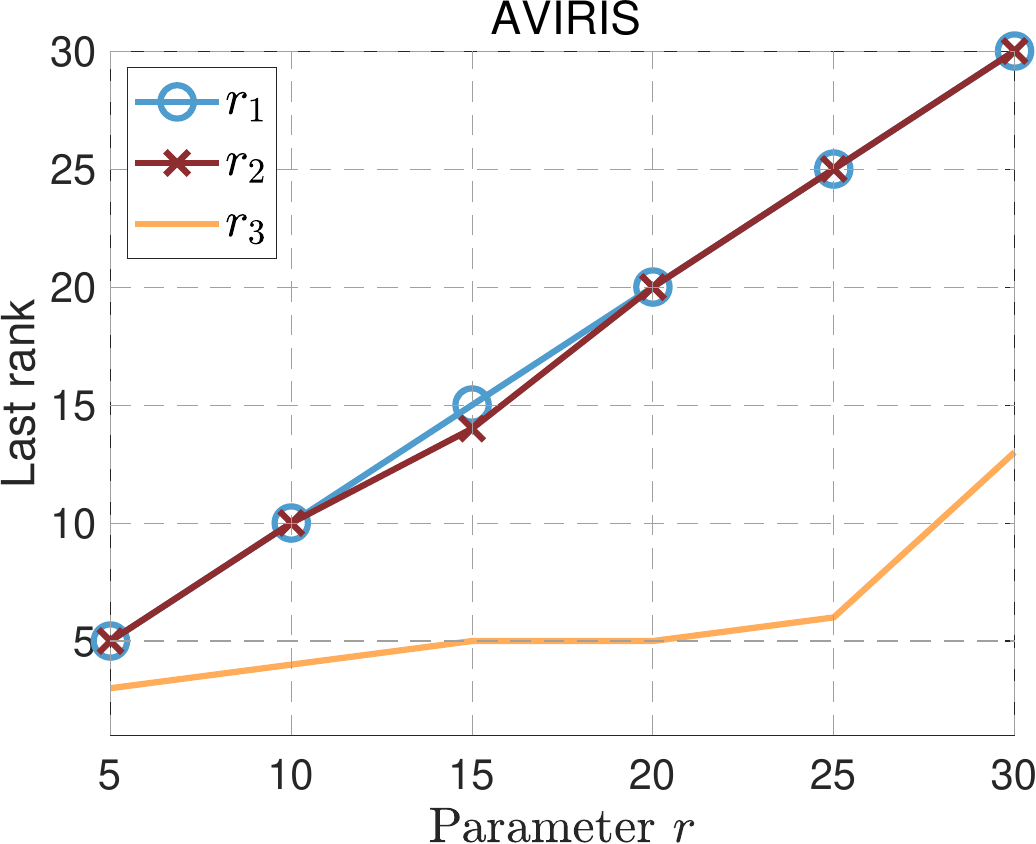}}
    \caption{The last rank obtained from TRAM for ``Ribeira'' and ``AVIRIS'' images under different parameters $\vecr=(r,r,r)$}
    \label{fig: HSI}
\end{figure}

\begin{table}[htbp]
    \centering
    \caption{Relative error and PSNR on ``Ribeira'' and ``AVIRIS'' image} 
    \label{tab: HSI}
    \begin{tabular}{ccrrrrr}
        \toprule 
        Tucker rank $\vecr$ & \multirow{2}*{Results} & \multicolumn{1}{c}{GRAP} & \multicolumn{1}{c}{rfGRAP} & \multicolumn{1}{c}{TRAM} & \multicolumn{1}{c}{GeomCG} & \multicolumn{1}{c}{Tucker-RCG}\\
        \cmidrule{3-7}
        $(r_1,r_2,r_3)$ & & \multicolumn{5}{c}{``Ribeira''}\\
        \midrule
        \multirow{2}*{$(5,5,5)$} & PSNR & \bf 24.9351 & 24.9325 & \bf 24.9351 & \bf 24.9351 & 24.9350 \\ 
        & relerr & \bf 0.2984 & 0.2985 & \bf 0.2984 & \bf 0.2984 & \bf 0.2984 \\ 
        \multirow{2}*{$(10,10,10)$} & PSNR & 26.8481 & 26.8482 &  \bf 26.8648 & 26.8483 & 26.8482 \\ 
        & relerr & 0.2394 & 0.2394 & \bf 0.2389 & 0.2394 & 0.2394 \\ 
        \multirow{2}*{$(15,15,15)$} & PSNR & 28.3451 & 28.3450 & \bf 28.4127 & 28.3451 & 28.3451 \\ 
        & relerr & 0.2015 & 0.2015 & \bf 0.1999 & 0.2015 & 0.2015 \\ 
        \multirow{2}*{$(20,20,20)$} & PSNR & 29.3908 & 29.3934 & \bf 29.5197 & 29.3917 & 29.3924 \\ 
        & relerr & 0.1786 & 0.1786 & \bf 0.1760 & 0.1786 & 0.1786 \\ 
        \multirow{2}*{$(25,25,25)$} & PSNR & 30.2324 & 30.1852 & \bf 30.3897 & 30.2315 & 30.2332 \\ 
        & relerr & 0.1621 & 0.1630 & \bf 0.1592 & 0.1622 & 0.1621 \\
        \multirow{2}*{$(30,30,30)$} & PSNR & 30.7088 & 30.7182 & \bf 30.9921 & 30.7579 & 30.7566 \\ 
        & relerr & 0.1535 & 0.1533 & \bf 0.1486 & 0.1526 & 0.1527 \\ 
        \midrule
         & & \multicolumn{5}{c}{``AVIRIS''}\\
        \midrule
        \multirow{2}*{$(5,5,5)$} & PSNR & \bf 31.7181 & \bf 31.7181 & 31.6955 & \bf 31.7181 & \bf 31.7181 \\ 
        & relerr & \bf 0.0835 & \bf 0.0835 & 0.0837 & \bf 0.0835 & \bf 0.0835 \\ 
        \multirow{2}*{$(10,10,10)$} & PSNR & 33.7393 & 33.7393 & \bf 33.7517 & 33.7393 & 33.7394 \\ 
        & relerr & 0.0661 & 0.0661 & \bf 0.0660 & 0.0661 & 0.0661 \\
        \multirow{2}*{$(15,15,15)$} & PSNR & 35.1308 & 35.1157 & \bf 35.1427 & 35.1144 & 35.1251 \\ 
        & relerr & 0.0564 & 0.0564 & \bf 0.0563 & 0.0565 & 0.0564 \\ 
        \multirow{2}*{$(20,20,20)$} & PSNR & 36.1776 & 36.1777 & \bf 36.5438 & 36.1781 & 36.1780 \\ 
        & relerr & 0.0500 & 0.0500 & \bf 0.0479 & 0.0500 & 0.0500 \\ 
        \multirow{2}*{$(25,25,25)$} & PSNR & 36.6010 & 36.6430 & \bf 37.5433 & 36.6142 & 36.6002 \\ 
        & relerr & 0.0476 & 0.0473 & \bf 0.0427 & 0.0475 & 0.0476 \\ 
        \multirow{2}*{$(30,30,30)$} & PSNR & 36.3106 & 36.4263 & \bf 37.4879 & 36.1278 & 36.1505 \\ 
        & relerr & 0.0492 & 0.0485 & \bf 0.0430 & 0.0502 & 0.0501 \\ 
        \bottomrule
    \end{tabular}
\end{table}

Figure~\ref{fig: HSI} and Table~\ref{tab: HSI} illustrate the recovery results of Tucker-based methods. We observe from Fig.~\ref{fig: HSI} that the low-rank structure along mode three is detected by TRAM, i.e., there exists similarity among different wavelength values of light in the image tensor $\tensA$. It is worth noting that the last rank obtained from TRAM under $\vecr=(15,15,15)$ in ``Ribeira'' image is $(15,15,6)$, which coincides with the rank selection in~\cite[\S 4.3.1]{kressner2014low}. Moreover, Table~\ref{tab: HSI} reports a quantified recovery result. The proposed GRAP and rfGRAP are comparable to GeomCG and Tucker-RCG. Specifically, the proposed TRAM method reaches the highest PSNR and the lowest relative error under most rank parameters.

\subsection{Experiments on ``MovieLens 1M'' dataset}
We consider tensor completion on the real-world dataset {``MovieLens 1M"\footnote{Available at \url{https://grouplens.org/datasets/movielens/1m/}.}}, which consists of $1000209$ movie ratings from 6040 users on 3952 movies from September 19th, 1997 to April 22nd, 1998. By choosing one week as a period, these movie ratings are formulated as a third-order tensor $\tensA$ of size $6040\times 3952\times 150$. We randomly select $80\%$ of the known ratings as a training set $\Omega$ and the rest $20\%$ ratings are test set $\Gamma$. The rank parameter is set to be $\vecr=(r,r,r)$ with $r=1,2,\dots,15$. In addition, we not only compare the performance of the proposed methods to other Tucker-based methods, but also to other methods including CP-AltMin, TT-RCG, and TR-RGD. To ensure a close number of parameters in different tensor decompositions, we choose the CP rank $9$, tensor train rank $(1,4,4,1)$, and $(3,3,3)$ in tensor ring completion. The initial guess $\tensX^{(0)}$ for the proposed methods is generated in the same fashion as section~\ref{subsec: synthetic} by Tucker decomposition. Then, the initial guesses for other methods are transformed by CP-ALS~\cite[Fig. 3.3]{kolda2009tensor}, TT-SVD~\cite[Theorem 2.1]{oseledets2011tensor}, and TR-SVD~\cite[Algorithm 1]{zhao2016tensor} from $\tensX^{(0)}$. Note that initial guesses under different tensor formats have a comparable number of parameters. A method is terminated if it exceeds the time budget of $3000$s.

Figure~\ref{fig: ML1M results tot} demonstrates the numerical results on the ``MovieLens 1M" dataset. We observe that: 1) the proposed methods are favorably comparable to GeomCG and Tucker-RCG with lower test error under different rank parameters $\vecr$; 2) The test error of the TRAM method is less sensitive when $\vecr$ increases, while the test error of the other candidates begins increasing; 3) Figure~\ref{fig: ML1M results tot}(right) presents the last rank obtained from TRAM. The proposed TRAM method indeed adaptively finds an appropriate rank $\vecr^{(t)}$ and reveals the low-rank structure of the categories of movies in mode two of the ``MovieLens 1M" data tensor $\tensA$. 

\begin{figure}[htbp]
    \centering
    \subfigure{\includegraphics[width=0.488\textwidth]{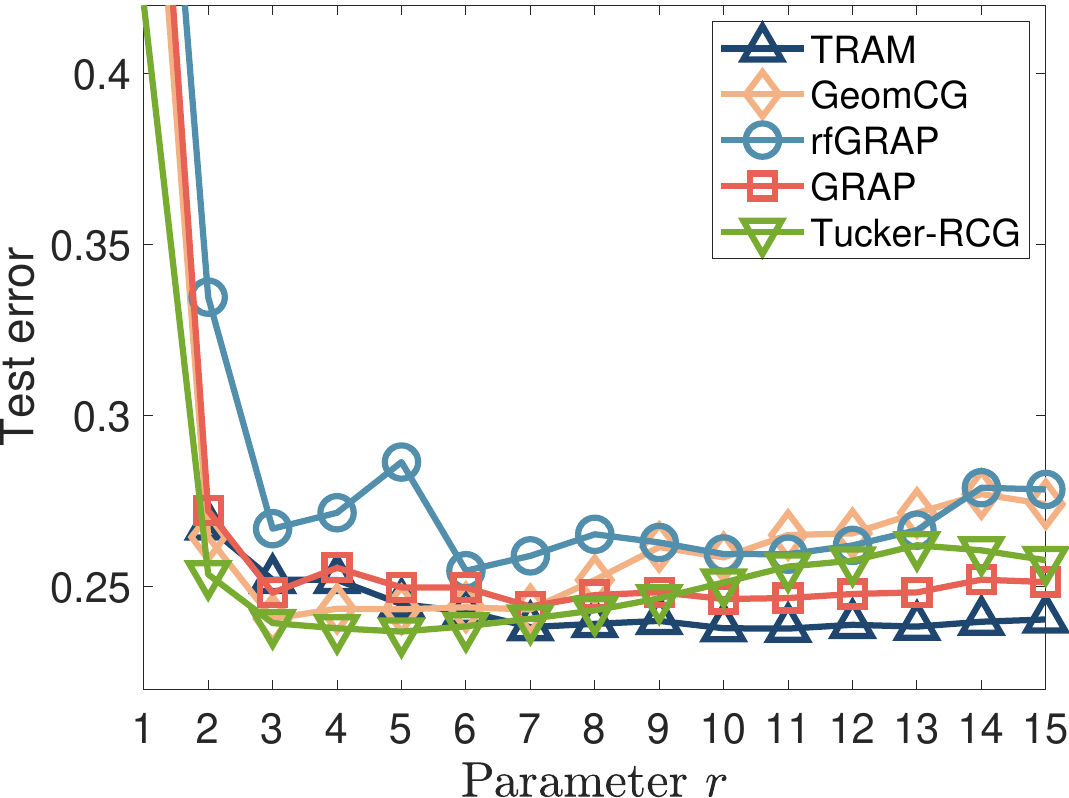}}\ % [test error]
    \subfigure{\includegraphics[width=0.468\textwidth]{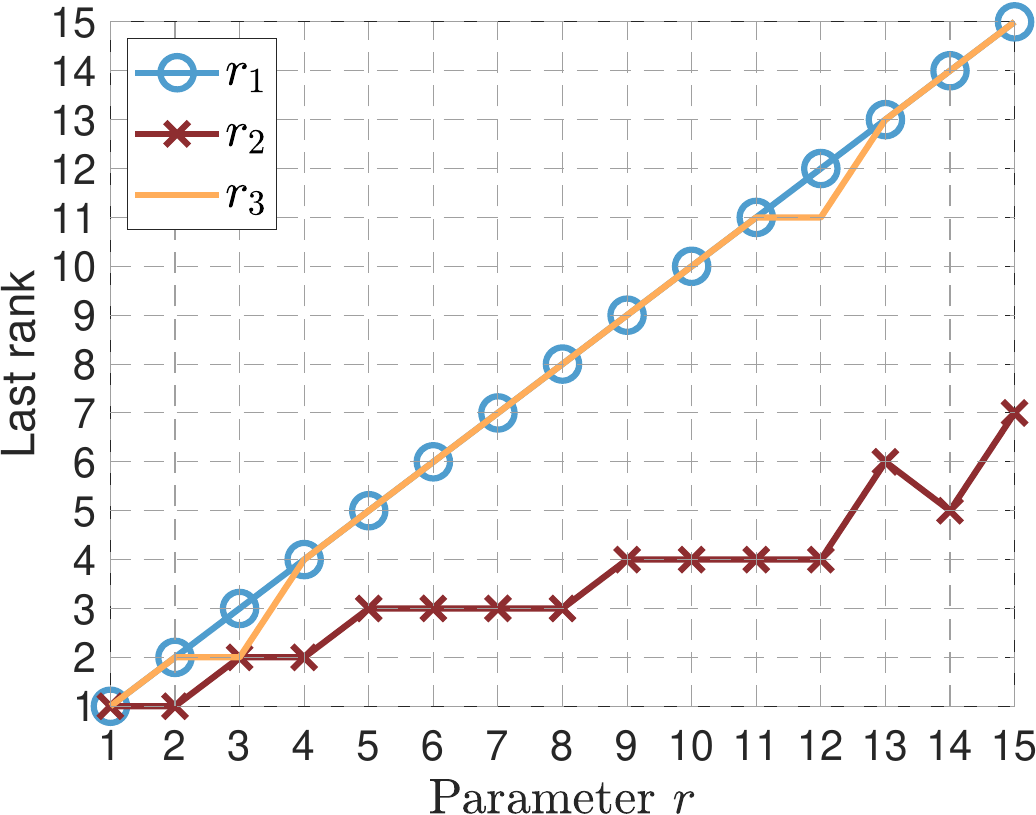}} % [last rank of TRAM]
    \caption{Test error and the last rank obtained from TRAM under different rank parameters $\vecr=(r,r,r)$. Left: test error. Right: last rank of TRAM}
    \label{fig: ML1M results tot}
\end{figure}

Morevover, we compare the Tucker-based methods with other methods under rank parameter $\vecr=(9,9,9)$. We observe from Fig.~\ref{fig: ML1M results 5}(left) that the proposed methods are favorably comparable to other candidates, and the TRAM method performs better than the other candidates. Figure~\ref{fig: ML1M results 5}(right) demonstrates that under the rank parameter $\vecr=(9,9,9)$, the parameter $\vecr^{(t)}$ is reduced to $(9,4,9)$. This reduction signifies the identification of four distinct categories of movies within the ``MovieLens 1M" by TRAM.

\begin{figure}[htbp]
    \centering
    \subfigure{\includegraphics[width=0.488\textwidth]{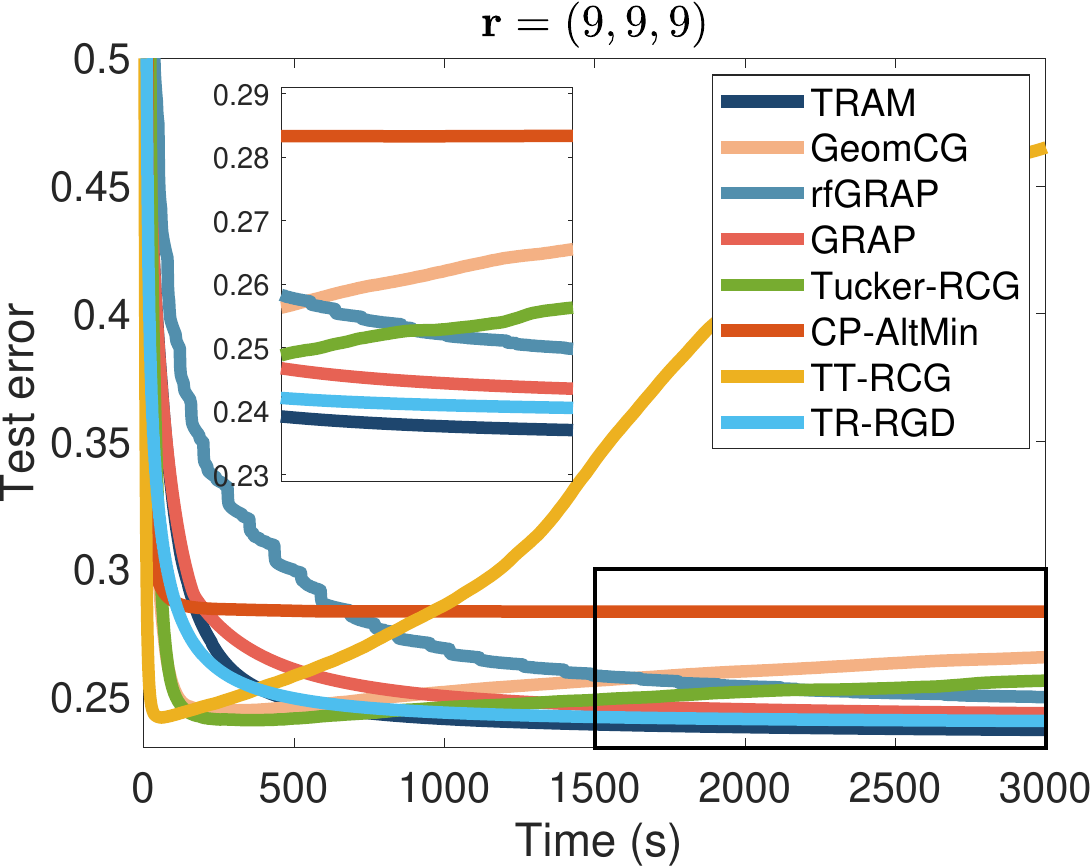}} % [test error]
    \subfigure{\includegraphics[width=0.47\textwidth]{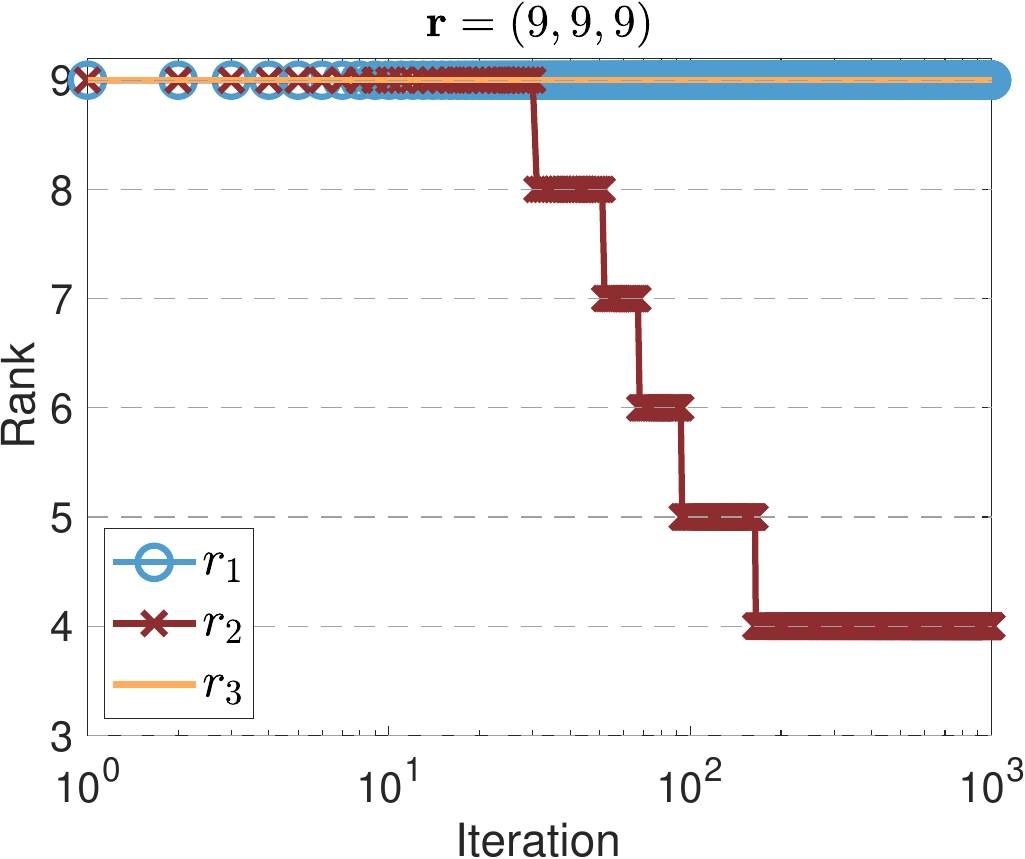}} % [rank update in iterations]
    \caption{Numerical results on ``MovieLens 1M" dataset under rank parameter $\vecr=(9,9,9)$. Left: test error. Right: rank update in iterations}
    \label{fig: ML1M results 5}
\end{figure}

\section{Conclusion and perspectives}\label{sec: conclusion}
In this paper, we have conducted a thorough investigation of the geometry of Tucker tensor varieties and have developed novel geometric and rank-adaptive methods for optimization on Tucker tensor varieties. We observe that the geometry of Tucker tensor varieties is closely connected but much more intricate to the matrix varieties. All of the results can elegantly boil down to the known ones in matrix varieties via geometric illustration figures. Furthermore, the heart of optimization on Tucker tensor varieties is the metric projection. By leveraging the established geometry, we have proposed approximate projections to circumvent the explicit computation of metric projections. Surprisingly, we have observed the retraction-free search directions by using partial information of the tangent cone. Numerical experiments on tensor completion suggest that the proposed methods perform better than existing state-of-the-art methods across various rank parameter selections. In general, when a reliable rank parameter estimation is available, we recommend the GRAP and rfGRAP methods to bypass the lengthy process of rank parameter selection. Conversely, in scenarios where the rank parameter is uncertain, the TRAM method is advised since it adaptively identifies an appropriate rank. To the best of our knowledge, this study represents the first endeavor to explore optimization on Tucker tensor varieties.

In the future, we intend to adopt the proposed methods for optimization on Tucker tensor varieties to other applications, e.g., dynamic low-rank tensor approximation and low-rank solution of high-dimensional partial differential equations. In addition, it is interesting but challenging to design \emph{apocalypse-free} methods on tensor varieties that is guaranteed to converge to a stationary point.

%%=============================================%%
%% For submissions to Nature Portfolio Journals %%
%% please use the heading ``Extended Data''.   %%
%%=============================================%%

%%=============================================================%%
%% Sample for another appendix section			               %%
%%=============================================================%%

%% \section{Example of another appendix section}\label{secA2}%
%% Appendices may be used for helpful, supporting or essential material that would otherwise 
%% clutter, break up or be distracting to the text. Appendices can consist of sections, figures, 
%% tables and equations etc.

% \end{appendices}

\section*{Acknowledgements}
We would like to thank the editor and two anonymous reviewers for insightful comments. We acknowledge Guillaume Olikier for helpful discussions on the retraction and GRAP method.

\section*{Declaration}
The authors declare that the data supporting the findings of this study are available within the paper. The authors have no competing interests to declare that are relevant to the content of this article.

\appendix

\section{Proof of Proposition~\ref{prop: retraction}}\label{app: retraction}
\begin{proof}
    \revise{It suffices to prove} $\lim_{t\to 0^+}(\retr_\tensX^\mathrm{HO}(t\tensV)-\tensX-t\tensV)/t=0$. Since $\tensV\in\tangent_{\tensX}\!\tensM_{\leq\vecr}$, it follows from~\cite[Proposition 2]{o2004limits} that there exists an analytic arc $\gamma:[0,\epsilon)\to\tensM_{\leq\vecr}$ such that $\gamma(0)=\tensX$ and $\dot{\gamma}(0)=\tensV$.
    
    Moreover, since $\proj_{\leq\vecr}(\tensX+t\tensV)$ is the metric projection of $\tensX+t\tensV$ onto $\tensM_{\leq\vecr}$, it holds that 
    \[\|\tensX+t\tensV-\proj_{\leq\vecr}(\tensX+t\tensV)\|_\frob\leq\|\tensX+t\tensV-\gamma(t)\|_\frob.\]
    By using the quasi-optimality~\eqref{eq: quasi-optimal HOSVD}, we have 
    \[\|\tensX+t\tensV-\retr^\mathrm{HO}_\tensX(t\tensV)\|_\frob\leq\sqrt{d}\|\tensX+t\tensV-\proj_{\leq\vecr}(\tensX+t\tensV)\|_\frob\leq\sqrt{d}\|\tensX+t\tensV-\gamma(t)\|_\frob=o(t),\]
    and thus $\retr_\tensX^\mathrm{HO}$ is a retraction mapping. \qed
\end{proof}

\section{A compact parametrization of the Tucker tangent cone}\label{app: compact Tucker tangent cone}
\revise{
We provide a compact parametrization of an element in $\tangent_\tensX\!\tensM_{\leq\vecr}$ by taking advantage of the compact parametrization~\eqref{eq: new representation of matrix 01} and the singular value decomposition $\matx_{(k)}=\matu_k\matG_{(k)}\matv_k^\T=\matu_k^{}\tilde{\matu}_k^{}\tilde{\Sigma}_{k}^{}\tilde{\matv}_k^\T\matv_k^\T$ in the proof of Theorem~\ref{thm: tangent cone of Tucker}. Note that the parameters $\tensC$ and $\matu_{k,1}$ in~\eqref{eq: Tucker tangent cone 01} have smaller sizes than those in~\eqref{eq: Tucker tangent cone}.

\begin{corollary}\label{coro: compact tangent cone of Tucker}
    Given a Tucker tensor $\tensX=\tensG\times_1\matu_1\cdots\times_d\matu_d\in\mathbb{R}^{n_1\times\cdots\times n_d}$ with $\ranktc(\tensX)=\underline{\vecr}\leq\vecr$, any $\tensV$ in the tangent cone of $\tensM_{\leq\vecr}$ at $\tensX$ can be expressed by 
    \begin{equation}
        \label{eq: Tucker tangent cone 01}
        \begin{aligned}
            \tensV=\tensC\times_{k=1}^d\begin{bmatrix}
                \matu_k & \matu_{k,1}
            \end{bmatrix}+\sum_{k=1}^d\tensG\times_k(\matu_{k,2}\matR_{k,2})\times_{j\neq k}\matu_j,
        \end{aligned}
    \end{equation}
    where $\tensC\in\mathbb{R}^{(\underline{r}_1+\ell_1)\times\cdots\times (\underline{r}_d+\ell_d)}$, $\matR_{k,2}\in\mathbb{R}^{(n_k-\underline{r}_k-\ell_k)\times \underline{r}_k}$, $\matu_{k,1}\in\St(\ell_k,n_k)$ and $\matu_{k,2}\in\St(n_k-\underline{r}_k-\ell_k,n_k)$ are arbitrary that satisfy $[\matu_k\ \matu_{k,1}\ \matu_{k,2}]\in\mathcal{O}(n_k)$ for $k\in[d]$, and $\vecl=(\ell_1,\ell_2,\dots,\ell_d)$ satisfies 
    \[\ell_k=\rank(\proj_{\matu_k}^\perp\!\matv_{(k)}^{}\!\proj_{\matv_{k}\tilde{\matv}_k}^\perp)=\rank([0\ \matu_{k,1}]\matC_{(k)}([\matu_j\ \matu_{j,1}]^{\otimes j\neq k})^\top\proj_{\matv_{k}\tilde{\matv}_k}^\perp)\]
    with $\matv_{k}=(\matu_j)^{\otimes j\neq k}$ and the right singular vectors $\tilde{\matv}_k\in\St(\underline{r}_k,\underline{r}_{-k})$ of $\matG_{(k)}$. Furthermore, the representation~\eqref{eq: Tucker tangent cone 01} is unique in the sense of the right orthogonal group actions on $\matu_{k,1}$ and $\matu_{k,2}$ with $k\in[d]$.
\end{corollary}
\begin{proof}
    We can obtain the parametrization~\eqref{eq: Tucker tangent cone 01} in a same fashion by substituting the parametrization in Fig.~\ref{fig: new representation of matrix tangent cone} by the compact surrogate~\eqref{eq: new representation of matrix 01} with  $\ell_k=\rank(\proj_{\matu_k}^\perp\!\matv_{(k)}^{}\!\proj_{\matv_{k}\tilde{\matv}_k}^\perp)$ in the proof of Theorem~\ref{thm: tangent cone of Tucker}.

    Furthermore, we aim to show that $\Span(\matu_{k,1})=\Span(\proj_{\matu_k}^\perp\!\matv_{(k)}^{}\!\proj_{\matv_{k}\tilde{\matv}_k}^\perp)$. To this end, we have
    \begin{equation*}
        \begin{aligned}
            \proj_{\matu_k}^\perp\!\matv_{(k)}\!\proj_{\matv_{k}\tilde{\matv}_k}^\perp&=\proj_{\matu_k}^\perp\!\Big([\matu_k\ \matu_{k,1}]\matC_{(k)}([\matu_j\ \matu_{j,1}]^{\otimes j\neq k})^\top+\matu_{k,2}\matR_{k,2}\matG_{(k)}\matv_{k}^\T\Big)\!\proj_{\matv_{k}\tilde{\matv}_k}^\perp\\
            &=[0\ \matu_{k,1}]\matC_{(k)}([\matu_j\ \matu_{j,1}]^{\otimes j\neq k})^\top\proj_{\matv_{k}\tilde{\matv}_k}^\perp+\matu_{k,2}^{}\matR_{k,2}^{}\tilde{\matu}_k^{}\tilde{\Sigma}_{k}^{}\tilde{\matv}_k^\T\matv_k^\T\proj_{\matv_{k}\tilde{\matv}_k}^\perp\\
            &=\matu_{k,1}\underline{\matC}_{(k)}([\matu_j\ \matu_{j,1}]^{\otimes j\neq k})^\top\proj_{\matv_{k}\tilde{\matv}_k}^\perp,
        \end{aligned}
    \end{equation*}
    where $\matG_{(k)}=\tilde{\matu}_k^{}\tilde{\Sigma}_{k}^{}\tilde{\matv}_k^\T$ is the SVD of the unfolding matrix $\matG_{(k)}$, and $\underline{\matC}_{(k)}\in\mathbb{R}^{\ell_k \times (\prod_{j\neq k}(\underline{r}_j+\ell_j))}$ consists of the last $\ell_k$ rows of $\matC_{(k)}$. Since $\matu_{k,1}\in\St(\ell_k,n_k)$ and $\ell_k=\rank(\proj_{\matu_k}^\perp\!\matv_{(k)}^{}\!\proj_{\matv_{k}\tilde{\matv}_k}^\perp)$, we obtain that $\Span(\matu_{k,1})=\Span(\proj_{\matu_k}^\perp\!\matv_{(k)}^{}\!\proj_{\matv_{k}\tilde{\matv}_k}^\perp)$. 

    Consequently, the representation~\eqref{eq: Tucker tangent cone 01} is unique in the sense of the right orthogonal group actions on $\matu_{k,1}$ and $\matu_{k,2}$ with $k\in[d]$.
    \qed 
\end{proof}

It is worth noting that since $r_{-k}=r$ for $d=2$ and $k=1,2$, it holds that $\tilde{\matv}_k\in\mathcal{O}(r)$ and thus $\proj_{\matv_{k}\tilde{\matv}_k}^\perp=\proj_{\matv_{k}}^\perp$. Therefore, the compact parametrization~\eqref{eq: Tucker tangent cone 01} coincides with the compact one in matrix case~\eqref{eq: new representation of matrix 01}.
}

\section{Proof of Proposition~\ref{prop: apocalypse of Tucker}}\label{app: apocalypse of Tucker}
\begin{proof}
    $\tensX^*$ admits the Tucker decomposition $\tensG^*\times_1\matu_1^*\cdots\times_d\matu_d^*$, where $\mathcal{G}^*\in\mathbb{R}^{\underline{r}_1\times \underline{r}_2\times\cdots\times \underline{r}_d}$, $\matU_k^*\in\St(\underline{r}_k,n_k)$ for $k\in[d]$. Consider the $k_0$-th unfolding $\matX^*_{(k_0)}=\matU_{k_0}^*\matG_{(k_0)}^*(\matV_{k_0}^*)^\T\in\mathbb{R}^{n_{k_0}\times n_{-k_0}}$, where $\matV_{k_0}^*:=(\matU_j^*)^{\otimes j\neq k_0}$. Since $\underline{r}_{k_0}<n_{k_0}$ and $\underline{r}_{-k_0}<n_{-k_0}$, \revise{there exist matrices} $\vecu\in\mathbb{R}^{n_{k_0}}\setminus\{0\}$ and $\vecv\in\mathbb{R}^{n_{-k_0}}\setminus\{0\}$, such that $(\matU_{k_0}^*)^\T\vecu=0$ and $(\matV_{k_0}^*)^\T\vecv=0$. \revise{We aim to construct a sequence $\{\tensX^{(t)}\}\subseteq\tensM_{\leq\vecr}$ and a function $f$ such that $\tensX^{(t)}$ converges to $\tensX^*$ and $\|\proj_{\tangent_{\tensX^{(t)}}\!\tensM_{\leq\vecr}}(-\nabla f(\tensX^{(t)}))\|_\frob$ converges to $0$, but $\tensX^*$ is not a stationary point of $f$.}

    \revise{First, we consider} the sequence $\tensX^{(t)}\in\tensM_{\leq\vecr}$ defined by $\tensX^{(t)}=\tensG^{(t)}\times_{k=1}^d\begin{bmatrix}
        \matu_k^* & \matu_{k,1}^{}
    \end{bmatrix}$, where $\matu_{k,1}\in\St(r_k-\underline{r}_{k},n_k)$ with $\matu_{k,1}^\T\matu_k^*=0$, $\mathcal{G}^{(t)}(i_1,\dots,i_d):=\mathcal{G}^*(i_1,\dots,i_d)$ if $(i_1,\dots,i_d)\leq\underline{\vecr}$, $\mathcal{G}^{(t)}(i_1,\dots,i_d):=\frac{1}{t}\bar{\mathcal{G}}(i_1-\underline{r}_1,\dots,i_d-\underline{r}_d)$ if $(i_1,\dots,i_d)>\underline{\vecr}$, $\mathcal{G}^{(t)}(i_1,\dots,i_d):=0$ otherwise; and $\bar{\mathcal{G}}\in\mathbb{R}^{(r_1-\underline{r}_1)\times (r_2-\underline{r}_2)\times \cdots\times (r_d-\underline{r}_d)}$ satisfying $\mathrm{rank}_\mathrm{tc}(\bar{\mathcal{G}})=\vecr-\underline{\vecr}$. Then, it holds that $\mathrm{rank}_\mathrm{tc}(\tensX^{(t)})=\vecr$ and $\tensX^{(t)}$ converges to $\tensX^*$. 

    \revise{Subsequently, we construct} the function $f(\tensX)=\vecu^\T\matX_{(k_0)}^*\vecv$. Since $\tensX^*$ is rank-deficient but the gradient $\nabla f(\tensX)=\ten_{(k_0)}(\vecu\vecv^\T)\neq 0$, it follows from \revise{Proposition~\ref{prop: stationary}} that $\tensX^*$ is not a stationary point. Moreover, for all $\tensX^{(t)}\in\tensM_{\vecr}$, we have
    \begin{equation*}
        \begin{aligned}
            \proj_{\tangent_{\tensX^{(t)}}\!\tensM_{\leq\vecr}}(-\nabla f(\tensX^{(t)}))=&\proj_{\tangent_{\tensX^{(t)}}\!\tensM_{\vecr}}(-\nabla f(\tensX^{(t)}))\\
            =&-\left(\ten_{(k_0)}(\vecu\vecv^\T)\times_{k=1}^d (\matU_k^*)^\T\right)\times_{k=1}^d \matU_k^*\\
            &-\sum_{k=1}^d\mathcal{G}^{(t)}\times_k\left(\proj^\perp_{\matU_k^*}(\mathrm{ten}_{(k_0)}(\vecu\vecv^\T)\times_{j\neq k}(\matU_j^*)^\T)_{(k)}(\mathcal{G}^{(t)}_{(k)})^\dagger\right)\times_{j\neq k}\matU_j^*\\
            =&-\mathcal{G}^{(t)}\times_{k_0}\left(\proj^\perp_{\matU_{k_0}^*}\vecu\vecv^\T\matv_{k_0}^*(\mathcal{G}^{(t)}_{(k_0)})^\dagger\right)\times_{j\neq {k_0}}\matU_j^*\\
            =&\,0,
        \end{aligned}
    \end{equation*}
    where we use \eqref{eq: orth proj onto tangent space Tucker} and $\mathrm{ten}_{(k_0)}(\vecu\vecv^\T)\times_{j\neq k}(\matU_j^*)^\T=0$ for $k\neq k_0$. 

    Hence, the triplet $(\tensX^*,\{\tensX^{(t)}\},f)$ is an apocalypse. \qed
\end{proof}

\section{Proof of Lemma~\ref{prop: TRAM 1}}\label{app: TRAM 1}
\begin{proof}

    Let $\{\tilde{\tensX}^{(t)}\}_{t\geq 0}$ be an infinite sequence generated by Algorithm~\ref{alg: TRAM}. First, we claim that there are infinite many $t$ such that $\|{\tensT}^{(t)}\|_\frob\leq\varepsilon_R^{(t)}$, \revise{where ${\tensT}^{(t)}=\grad f({\tensX}^{(t)})$.} Otherwise, according to the stopping criteria of Algorithm~\ref{alg: RGD}, it always returns rank-deficient points for sufficiently large $t$. Hence, the rank-decreasing procedure will be executed continuously, which leads to a contradiction to the fact that rank is finite. Additionally, we \revise{assume} that the update of parameters in lines~\ref{line: 7} and~\ref{line: 15} is executed finitely. If not, $\varepsilon_R^{(t+1)}=\rho_R^{}\varepsilon_R^{(t)}$ \revise{will be} executed infinitely and thus $\lim_{t\to\infty}\varepsilon_R^{(t)}=0$, implying $\liminf_{t\to\infty}\|\proj_{\tangent_{{\tensX}^{(t)}}\!\tensM_{\underline{\vecr}^{(t)}}}(-\nabla f({\tensX}^{(t)}))\|_\frob=0$. 

    Consequently, there exists a subsequence $\{\tensX^{(t_j)}\}_{j\geq 0}$ satisfying $\|\tensT^{(t_j)}\|_\frob\leq\varepsilon_R^{(t_j)}$. We aim to prove that $\|\tensT^{(t_j)}\|_\frob$ converges to $0$. Since $f$ is bounded \revise{from} below, it holds that
    \[0\leq\lim_{j\to\infty} f({\tensX}^{(t_j)})-f(\tilde{\tensX}^{(t_j+1)})\leq\lim_{j\to\infty} f({\tensX}^{(t_j)})-f({\tensX}^{(t_j+1)})=0.\]
    Subsequently, we proceed to discuss two scenarios (rank-increasing procedure and restart) regarding the update of $\tensX^{(t_j)}$ for sufficiently large $j$: 1) if the rank-increasing procedure in line~\ref{line: 11} is executed, it follows from the backtracking line search in Algorithm~\ref{alg: rank increase} and~\eqref{eq: increase accept} that
            \begin{equation*}
                \begin{aligned}
                    f({\tensX}^{(t_j)})-f(\tilde{\tensX}^{(t_j+1)})&\geq s_{\min}a\left\langle\tensN_{\leq\vecl^{(t_j)}}^{(t_j)},-\nabla f({\tensX}^{(t_j)})\right\rangle\\
                    &=s_{\min}a\,\|\tensN_{\leq\vecl^{(t_j)}}^{(t_j)}\|_\frob^2\\
                    &\geq s_{\min}a\,\varepsilon_1^2\|\tensT^{(t_j)}\|_\frob^2;
                \end{aligned}
            \end{equation*} 
            2) if the restart in line~\ref{line: 13} is executed, the search direction \revise{$\approj_{\tangent_{{\tensX}^{(t_j)}}\!\tensM_{\leq\vecr}}(-\nabla f({\tensX}^{(t_j)}))$} is adopted. Therefore, it holds that
            
                \[f({\tensX}^{(t_j)})-f(\tilde{\tensX}^{(t_j+1)})\geq s_{\min}a\,\|\revise{\approj_{\tangent_{{\tensX}^{(t_j)}}\!\tensM_{\leq\vecr}}(-\nabla f({\tensX}^{(t_j)}))}\|_\frob^2\geq s_{\min}a\,\|\tensT^{(t_j)}\|_\frob^2.\]
                Note that the last inequality comes from $\tangent_{{\tensX}^{(t_j)}}\!\tensM_{\underline{\vecr}^{(t_j)}}\subseteq\tangent_{{\tensX}^{(t_j)}}\!\tensM_{\leq\vecr}$.
            
    In summary, we have 
    \[\|\tensT^{(t_j)}\|_\frob^2\leq\frac{f({\tensX}^{(t_j)})-f(\tilde{\tensX}^{(t_j+1)})}{s_{\min}a\min\{\varepsilon_1^2,1\}}\]
    for sufficiently large $j$ and thus it converges to $0$. \qed
\end{proof}

\printbibliography
%\begin{acknowledgements}
%If you'd like to thank anyone, place your comments here
%and remove the percent signs.
%\end{acknowledgements}

% BibTeX users please use one of
% \bibliographystyle{spbasic}      % basic style, author-year citations
% \bibliographystyle{spmpsci}      % mathematics and physical sciences
%\bibliographystyle{spphys}       % APS-like style for physics
% \bibliography{sn-bibliography}   % name your BibTeX data base

% Non-BibTeX users please use
% \begin{thebibliography}{}
%
% and use \bibitem to create references. Consult the Instructions
% for authors for reference list style.
%
% \bibitem{RefJ}
% % Format for Journal Reference
% Author, Article title, Journal, Volume, page numbers (year)
% % Format for books
% \bibitem{RefB}
% Author, Book title, page numbers. Publisher, place (year)
% % etc
% \end{thebibliography}

\end{document}